\documentclass[a4paper,twoside,10pt]{amsart}

        \usepackage{color}

  \setlength\voffset{-1in}
  \setlength\hoffset{-1in}
  \setlength\headsep{17pt}
  \setlength\topskip\baselineskip
  \setlength\marginparsep{5pt}
  \setlength\marginparwidth{20pt}
  \setlength\marginparpush{5pt}
  \setlength\headheight{4\baselineskip}
  \setlength\topmargin{20pt}
  \setlength{\footskip}{21pt}
  \setlength\textheight{55\baselineskip}

  \setlength\paperwidth{8.5in}
  \setlength\paperheight{11in}
  \setlength\columnsep{2pc}
  \setlength\oddsidemargin{3.5cm}
  \setlength\evensidemargin{3.5cm}
  \setlength\textwidth\paperwidth
  \addtolength\textwidth{-2\oddsidemargin}

\usepackage{amsthm,amsmath,amssymb,amscd}
\usepackage{graphicx}
\usepackage{epic}

\newtheorem{theorem}{Theorem}[section]
\newtheorem{proposition}[theorem]{Proposition}
\newtheorem{lemma}[theorem]{Lemma}
\newtheorem{corollary}[theorem]{Corollary}

\newtheorem{claim}[theorem]{Claim}

\theoremstyle{definition}
\newtheorem{definition}[theorem]{Definition}
\newtheorem{notation}[theorem]{Notation}

\newtheorem{example}[theorem]{Example}
\theoremstyle{remark}
\newtheorem{remark}[theorem]{Remark}

\def\Par{{\par  \noindent}}
\def\A{{\mathbf A}}

\def\P{{\mathbf P}}

\def\Q{{\mathbf Q}}

\def\R{{\mathbf R}}

\def\C{{\mathbf C}}
\def\L{{\mathcal L}}
\def\N{{\mathbf N}}
\def\Newton{{\mathcal N}}

\def\Z{{\mathbf Z}}

\def\G{{\Gamma}}

\def\a{\alpha}
\def\A{\mathbf A}

\def\b{\beta}
\def\C{\mathbf C}
\def\cprime{$'$}
\def\codim{\mathrm{codim}}

\def\d{\delta}

\def\g{\gamma}

\def\limproj{\mathop{\oalign{lim\cr\hidewidth$\longleftarrow$\hidewidth\cr}}}

\def\J{{\mathcal{J}}}
\def\l{\lambda}

\def\N{\mathbf N}
\def\orb{\mathrm {orb}}
\def\int{\mathrm{int}}
\def\ord{\mathrm {ord}}

\def\p{\pi}

\def\P{{\mathbf P}}

\def\Q{\mathbf Q}
\def\R{\mathbf R}
\def\r{\rho}

\def\s{\sigma}
\def\t{\tau}

\def\w{\omega}

\def\y{\wedge}
\def\Z{\mathbf Z}
\def\z{\zeta}

\def\limproj{\mathop{\oalign{lim\cr\hidewidth$\longleftarrow$\hidewidth\cr}}}
\def\elem(#1,#2){  \{ \frac{#1}{\overline{\ #2\ }}\}  }

\title{TORIC GEOMETRY AND THE SEMPLE-NASH MODIFICATION}
\author{Pedro D. Gonz\'alez P\'erez}

\address{Instituto de Ciencias Matem\'aticas-CSIC-UAM-UC3M-UCM. Dpto. de Algebra. Facultad de Ciencias Matem\'aticas.
Universidad Complutense de Madrid.
Plaza de las Ciencias 3. 28040. Madrid. Spain.}

\email{pgonzalez@mat.ucm.es}
\thanks{Gonz\'alez P\'erez is supported by Severo Ochoa project SEV-2011-0087 and
MTM2010-21740-C02-01 grants. Teissier is partially supported by grant MTM2007-64704}

\author{Bernard Teissier}
\address{
Institut Math\'{e}matique de Jussieu, UMR 7586 du CNRS, 175 Rue du Chevaleret, 75013 Paris, France}

\email{teissier@math.jussieu.fr}

\keywords{Toric geometry, Semple-Nash modification, logarithmic
jacobian ideal}

\subjclass[2000]{14M25, 14E15, 14B05}

\begin{document}

\maketitle

\begin{center} 
\emph{To Heisuke Hironaka on the occasion of his 80th birthday}
\end{center}

\begin{abstract}
This paper proposes some material towards a theory of general
toric varieties without the assumption of normality. Their
combinatorial description involves a fan to which is attached a
set of semigroups subjected to gluing-up conditions. In particular
it contains a combinatorial construction of the blowing up of a
sheaf of monomial ideals on a toric variety. In the second part
this is used to show that iterating the Semple-Nash modification
or its characteristic-free avatar provides a local uniformization of
 any monomial valuation of maximal rank dominating
a point of a toric variety.
\end{abstract}

\section* {Introduction}
In the first part of this paper we study  abstract  toric
varieties without the assumption of normality. Since Sumihiro's
Theorem on the existence of a covering of a toric variety by
invariant affine varieties fails without the assumption of
normality, we have to set the existence of such a covering as part
of the definition of a toric variety. Then an abstract toric
variety has a combinatorial description: it corresponds to certain
semigroups in the convex duals of the cones of a fan, which
satisfy a natural gluing-up condition. This generalizes the
definition of \cite{GKZ} which concerns toric varieties
equivariantly embedded in projective space. In spirit it is also a
continuation of our previous work \cite{GP-T} on embedded
normalization and embedded toric resolution of singularities of
affine toric varieties. We can then define blowing-ups of sheaves
of monomial ideals as toric varieties, and describe the
corresponding operations on semigroups. We also provide the
combinatorial description of torus-invariant Cartier divisors on a
toric variety and the general versions of the classical criteria
for ampleness and very-ampleness.\par In the second part of the
paper we use the description of blowing-ups given in the first
part to show that one can
obtain a local uniformization of a monomial valuation of maximal rank dominating
a toric variety, by a finite number of iterations of  the
blowing-up of the logarithmic jacobian ideal introduced in
\cite{GS-ENS}. If the field is of characteristic zero, this
blowing-up is isomorphic to the Semple-Nash modification.
Recall that this is a canonical modification of a reduced equidimensional space which replaces each point by
the set of limit positions of tangent spaces at nearby non-singular points. See the second part for details.

\par  Our result is related in the special case of
toric varieties to an interesting question apparently first asked by
Semple in \cite{S}, that is,  if
the iteration of the Semple-Nash modification
eventually resolves the singularities of an algebraic variety
defined over a field of characteristic
zero. One can present the relation of this conjectural
resolution process with \textit{classical} resolution of singularities as
follows: two types of proper birational correspondences naturally
associate a singular variety to a non-singular one: proper
birational projections of an embedded non-singular algebraic
variety to a smaller dimensional ambient space, and the taking of
the \textit{envelope} of a family of linear subspaces (of an
affine or projective space) whose parameter space is a non
singular algebraic variety. For example, a family of lines in the real plane parametrized by a circle, such as the Simson-Wallace lines of a triangle, parametrized by its circumscribed circle, have an envelope with singularities, generically with an odd number of cusps (see \cite{Thom}). In the Simson-Wallace lines case, it is a quartic with three cusps (see \cite{St}). The natural map from the circle to the envelope is a homeomorphism and a local isomorphism outside of the cusps. Another example is given by the discriminant hypersurface of a versal deformation of an isolated complex hypersurface singularity: it is the envelope of a family of complex hyperplanes parametrized by the critical locus, which is non-singular and is the normalization of the discriminant (see \cite{T-hunting}) and therefore a resolution of singularities. \par Hironaka's resolution shows that in
characteristic zero all singularities may be created by the first
process, and Semple-Nash resolution in general would show that at least all
singularities of projective varieties in characteristic zero may be created by iterating
a natural generalization of the second process, if we allow singular spaces as parameter
spaces.
Moreover  it would produce a canonical process for local uniformization of
valuations in characteristic zero, as our results here do  for maximal rank monomial valuations on
toric varieties  in a characteristic-free manner.

\par\medskip\noindent
{\bf Acknowledgements.} We are grateful to Monique Lejeune-Jalabert for
introducing us to the reference \cite{S}, to Ezra Miller for bringing to our attention the work of H.M. Thompson,
to Michael Thaddeus for detecting errors in previous versions of this work.\par
We also mention that while this paper was in preparation, Mr. Daniel Duarte provided in \cite{Du} a proof of a result similar to our main result, \emph{a priori} somewhat stronger and giving an effective bound on the number of steps required for desingularization, in the two-dimensional case. A preprint of Dima Grigoriev and Pierre Milman (see \cite{G-M}) also provided an approach to the Semple-Nash desingularization problem for toric varieties with explicit results in dimension 2. In particular it contains a version of our Lemma 12.1. 
We thank the referee for his careful work and helpful suggestions. \par\noindent
We also thank Patrice Philippon for his help, David Cox for his remarks
and the  \textit{Institut
Math\'ematique de Jussieu} and  the \textit{Dpto.~\'Algebra,
Universidad Complutense de Madrid} for their hospitality.
\par\medskip\noindent
\centerline{\textbf{Part I: Toric
varieties}}\par\medskip

\Par\noindent
The purpose of this part is to develop the combinatorial theory of toric varieties
without the assumption of normality. 
We refer to \cite{Da},  \cite{Fulton}, \cite{TE}, \cite{Oda-M}, and \cite{Oda} 
for background on normal toric varieties, and to the books of
Oda-Miyake (\cite{Oda-M}), Gel{\cprime}fand, Kapranov, and  Zelevinsky (\cite{GKZ}) 
and Sturmfels (\cite{Sturmfels}) for certain classes of non necessarily normal toric varieties. 
We also point to previous work by H.M. Thompson towards the development of a general theory of 
toric varieties, see \cite{Thompson1}, \cite{Thompson2} and, from the perspective of Log Schemes,
\cite{Thompson3}.
This part is also connected with the more general theory of monoid schemes recently developped by
Corti\~nas \textit{et al.} in \cite{CHWW}.
We recommend \cite{Ewald} as a particularly accessible introduction to (normal)
toric varieties, and also the recent book  \cite{CLS} of D. Cox, J. Little and H. Schenk on the subject. \large\par\medskip

\begin{section}{Semigroups and semigroup algebras}\label{semi}
The theory of affine toric varieties over a field $k$ is the geometric version of the
theory of semigroup algebras over $k$. For part of the theory, one can omit the
assumption that the semigroup is finitely generated, and replace the field $k$
by a commutative ring.
\begin{definition} A (commutative) semigroup $\Gamma$ is a set equipped with
an operation  $+\colon\Gamma\times \Gamma\to \Gamma$ such that
$\epsilon_1+\epsilon_2=\epsilon_2+\epsilon_1$, which satisfies the associativity property and is cancellative ($\epsilon_1+\epsilon_2=\epsilon_1+\epsilon_3$ implies $\epsilon_2=\epsilon_3$). We shall assume
that $\Gamma$ contains a zero element $0$ such that
$\epsilon+0=\epsilon$.
We denote by $\Z \Gamma$ the group
generated by $\Gamma$ (defined in a similar way as the field of
fractions of an integral domain). To say that $\Gamma$ is cancellative means that the natural map of semigroups $\Gamma\to\Z\Gamma$ is injective. We say that $\Gamma$ is torsion free if the abelian group $\Z\Gamma$ is, which means that the only solution in $\N$ of an equation $m\gamma= m\gamma'$ with $\gamma,\gamma'\in\Gamma, \gamma\neq\gamma',$ is $m=0$. Since $\Gamma$ is cancellative, it implies that the only solution in $\Gamma$ of an equation $m\gamma =n\gamma$ with $m,n\in\N,\ m\neq n,$ is $\gamma=0$.\par\noindent 
 A system of generators of a semigroup is a
subset $(\gamma_i)$ of $\Gamma$ such that each element of $\Gamma$
is a (finite) linear combination of the $\gamma_i$ with non
negative integral coefficients. The elements of $\Z \Gamma$ are
finite linear combinations of the $\g_i$ with integral
coefficients. If the semigroup $\Gamma$ is cancellative, torsion free and finitely generated, the group $\Z\Gamma$ is a lattice so that $\Gamma$ is isomorphic to a finitely generated subsemigroup of a lattice $\Z^d$.
\end{definition}

\noindent \textbf{Examples of semigroups}: \Par  \noindent
$\bullet$ Given finitely many coprime integers the set of all
combinations of these integers with non negative integral
coefficients is a subsemigroup $\Gamma$ of the semigroup $\mathbf
N$ of integers, and ${\mathbf N}\setminus \Gamma$ is finite. In
fact any semigroup of integers is finitely generated.\Par
\noindent $\bullet$ Let $(s_i)_{i\geq 1}$ be a sequence of
integers such that $s_i\geq 2$ for $i\geq 2$. Define a sequence of
rational numbers $\gamma_i$ inductively by:
$$\gamma_1=\frac{1}{s_1},\ \ \ \gamma_{i+1}=s_i\gamma_i+\frac{1}{s_1\ldots s_{i+1}}.$$
The set of integral linear combinations of the $\gamma_i$ is a
subsemigroup of ${\mathbf Q}_{\geq 0}$, which is not finitely
generated. In fact the $\gamma_i$ form a minimal set of
generators.\Par \noindent $\bullet$ Let $d$ be an integer and let
$\check\sigma$
 (the reason for the dual notation will appear below) be a convex cone of dimension $d$ in $\check\R^d$.
Denote by $M$ the integral lattice of $\check\R^d$. Then the
intersection $\check\sigma\cap M$ is a subsemigroup of the group
$M$, which generates $M$ as a group.\Par  \noindent By a Theorem
of Gordan, if the convex cone $\check\sigma$ is rational in the
sense that it is the intersection of finitely many half spaces
determined by hyperplanes with integral coefficients, then the
semigroup $\check\sigma\cap M$ is finitely generated.

\begin{definition} If $\Delta$ is a
subsemigroup of $\Lambda$ the saturation of $\Delta$ in $\Lambda$
is the semigroup  $\Theta$ consisting of those elements of
$\Lambda$ which have a multiple in $\Delta$. The semigroup
$\Delta$ is saturated in $\Lambda$ if $\Delta = \Theta$.
\end{definition}
\begin{lemma}\label{reduc}
Let $\check\t$ be a rational convex cone in $\check\R^d$ for
the lattice $M$. The semigroup $\check\t\cap M$ is saturated
in $M$ and the saturation of a subsemigroup $\Gamma$ of $M$ is
$\check\s\cap M$ where $\check{\s} = \R_{\geq 0} \Gamma$ is
the closed convex cone generated by $\Gamma$.
\end{lemma}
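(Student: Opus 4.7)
The lemma contains two claims, which I would address in turn. For the first, suppose $m\in M$ and $nm\in\check\t\cap M$ for some positive integer $n$. Since $\check\t$ is a convex cone, it is stable under multiplication by positive real scalars, so $m=\tfrac{1}{n}(nm)\in\check\t$, whence $m\in\check\t\cap M$. The rationality of $\check\t$ plays no role here.

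For the second claim, write $\Theta$ for the saturation of $\Gamma$ in $M$ (that is, the set of elements of $M$ having a positive integer multiple in $\Gamma$). The inclusion $\Theta\subseteq\check\s\cap M$ follows from exactly the same cone-scaling argument: if $nm\in\Gamma\subseteq\check\s$ and $m\in M$, then $m\in\check\s\cap M$. The content of the lemma is the reverse inclusion $\check\s\cap M\subseteq\Theta$. My plan is to start from a real conic expression $m=\sum_{i=1}^k\lambda_i\gamma_i$ with $\lambda_i\in\R_{\geq 0}$ and $\gamma_i\in\Gamma$, replace the coefficients by non-negative rationals, and then clear denominators to exhibit a positive integer multiple of $m$ lying in $\Gamma$.

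The rational-approximation step is the heart of the argument. I would consider the polyhedron
\[
P=\bigl\{\,a\in\R^k_{\geq 0}\ :\ \sum_{i=1}^k a_i\gamma_i=m\,\bigr\}.
\]
By hypothesis $P$ contains $(\lambda_1,\ldots,\lambda_k)$, so it is non-empty, and it is defined by linear equations and inequalities with rational coefficients, since the $\gamma_i$ and $m$ have integer coordinates in $M$. A standard fact---the vertices and recession rays of a non-empty rational polyhedron are rational---then guarantees a point $(q_1,\ldots,q_k)\in P\cap\Q^k_{\geq 0}$; equivalently, one may perturb $(\lambda_i)$ by a small element of the rational kernel of the integral linear map $a\mapsto\sum a_i\gamma_i$ while preserving non-negativity. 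Taking a common denominator $N$ of the $q_i$ then yields $Nm=\sum_i(Nq_i)\gamma_i$ with $Nq_i\in\Z_{\geq 0}$, so $Nm\in\Gamma$ and $m\in\Theta$. The main obstacle is thus this rational-point step; everything else is formal manipulation.
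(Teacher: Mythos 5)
Your proof is correct and follows the same outline as the paper's: express $m\in\check\s\cap M$ as a non-negative rational combination of elements of $\Gamma$ and then clear denominators. The paper states the rational-approximation step without proof ("any element of $\check\sigma\cap M$ is a combination with rational coefficients of elements of $\Gamma$"), whereas you supply the missing justification via the standard fact that a non-empty rational polyhedron contains a rational point.
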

\textit{Proof.} The first statement is clear. If $\R_{\geq 0}
\Gamma= \check\sigma$, any element of
 $\check\sigma\cap M$ is a combination with rational coefficients of elements of $\Gamma$.
 Chasing denominators shows that an integral multiple of this element is in $\Gamma$. The converse is clear.
$\Box$

\begin{definition}
Let $\Gamma$ be a finitely generated commutative semigroup and
$A$ a commutative ring. The semigroup algebra $A[t^\Gamma]$ of $\Gamma$ with coefficients in $A$
is the ring consisting of finite sums $\sum_\gamma a_\gamma t^\gamma$ with $a_\gamma\in A$,
endowed with the multiplication law
$$(\sum_\gamma a_\gamma t^\gamma)(\sum_\delta b_\delta t^\delta)\
=\  \sum_\zeta(\sum_{\gamma+\delta=\zeta} a_\gamma b_\delta)t^\zeta .$$
\end{definition}

\begin{proposition} \label{normal} If $\Gamma$ is a finitely generated subsemigroup of the lattice
$M\subset \R^d$ such that $\Z \Gamma  = M$  and $\check\s=\R_{\geq
0}\Gamma$ is the rational convex cone generated by $\Gamma$,  the
integral closure of $k[t^\Gamma]$ in its field of fractions is
$k[t^{\check\s\cap M}]$.\end{proposition}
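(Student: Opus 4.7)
The plan is to prove the two inclusions separately: first show that every monomial $t^m$ with $m\in\check\s\cap M$ is integral over $k[t^\Gamma]$, and then show that $k[t^{\check\s\cap M}]$ is already integrally closed in its own field of fractions. Since $\Z\Gamma=M$, the three rings $k[t^\Gamma]\subset k[t^{\check\s\cap M}]\subset k[t^M]$ share the common field of fractions $k(M)$, so all integral closures are being taken inside the same ambient field.

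For the integrality of $k[t^{\check\s\cap M}]$ over $k[t^\Gamma]$, Lemma \ref{reduc} identifies $\check\s\cap M$ with the saturation of $\Gamma$ in $M$. Thus for every $m\in\check\s\cap M$ there exists an integer $n\geq 1$ with $nm\in\Gamma$, and then $t^m$ is a root of the monic polynomial $X^n-t^{nm}\in k[t^\Gamma][X]$. Since these monomials generate $k[t^{\check\s\cap M}]$ as a $k$-algebra, the inclusion is integral.

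For the reverse direction I would prove that $k[t^{\check\s\cap M}]$ is a normal domain. Write the rational convex cone as a finite intersection $\check\s=\bigcap_{i=1}^{r}H_i^{+}$ of closed rational half-spaces. Intersecting with $M$ yields $\check\s\cap M=\bigcap_i(H_i^{+}\cap M)$, and comparing monomials one checks inside $k[t^M]$ that
$$k[t^{\check\s\cap M}]=\bigcap_{i=1}^{r}k[t^{H_i^{+}\cap M}],$$
because a Laurent polynomial with support $S\subset M$ lies in a semigroup subalgebra iff $S$ lies in the corresponding semigroup. Now each $H_i^{+}\cap M$ is, after a suitable $\mathrm{GL}_d(\Z)$-change of basis of $M$, isomorphic to $\N\oplus\Z^{d-1}$, so $k[t^{H_i^{+}\cap M}]$ is isomorphic to a Laurent ring $k[x_1,x_2^{\pm 1},\ldots,x_d^{\pm 1}]$, which is a localization of a polynomial algebra and hence a normal domain. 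An intersection of normal subrings of a common field of fractions is normal, so $k[t^{\check\s\cap M}]$ is normal, and being already integral over $k[t^\Gamma]$ by the previous step, it is the integral closure.

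The main obstacle is the normality step, that is, establishing the equality $k[t^{\check\s\cap M}]=\bigcap_i k[t^{H_i^{+}\cap M}]$ and the normality of the ``half-space'' subalgebras $k[t^{H_i^{+}\cap M}]$; the hypothesis $\Z\Gamma=M$ is crucial here so that after a lattice change of basis the half-space semigroup really is $\N\oplus\Z^{d-1}$, with no lower-dimensional torsion or sublattice complications.
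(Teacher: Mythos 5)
Your proposal is correct, and it essentially fills in the details of the paper's extremely terse proof (``This follows directly from Lemma~\ref{reduc}''). The paper leans on Lemma~\ref{reduc} to identify $\check\s\cap M$ as the saturation of $\Gamma$ in $M$, which gives the integrality direction exactly as you argue via the monic relations $X^n - t^{nm}$; what the paper leaves implicit, as a standard fact, is that $k[t^{\check\s\cap M}]$ is a normal domain. Your intersection-of-half-space-algebras argument for that normality is the classical Hochster-style proof and is correct: each $k[t^{H_i^+\cap M}]\cong k[x_1,x_2^{\pm1},\dots,x_d^{\pm1}]$ is a localization of a polynomial ring (using $\Z\Gamma = M$ so the primitive normal $\nu_i$ extends to a dual basis), and an intersection of integrally closed subrings of $k(t^M)$ containing $k[t^{\check\s\cap M}]$ whose common fraction field is $k(t^M)$ is again integrally closed. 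One small point worth making explicit: you should note that $\check\s$ is full-dimensional (which follows from $\Z\Gamma = M$) so that $\check\s\cap M$ really does generate $M$ and the fraction field of $k[t^{\check\s\cap M}]$ is all of $k(t^M)$, not a proper subfield.
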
 This follows directly
from Lemma \ref{reduc}.

\begin{remark}
Quite generally, if $k$ is a field the Krull dimension of $k
[t^\Gamma]$ is equal to the \textit{rational rank} of the
semigroup $\Gamma$, which is the integer $\hbox{\rm dim}_{\Q}
\Gamma \otimes_\Z\Q$ (see \cite{T-valuations}, Proposition 3.1).
\end{remark}

\begin{remark}       \label{special-point}
If $\Gamma$ is a semigroup the ideal of $A[t^\Gamma]$
generated by the $(t^\gamma)_{\gamma\in \Gamma\setminus\{0\}}$ is
non trivial if and only if the cone  $ \R_{\geq 0} \Gamma$ is
strictly convex. If $k$ is a field, it is then a maximal ideal. We
shall mostly be interested in the local study of the spectrum of
semigroup algebras in the vicinity of the origin of coordinates,
which corresponds precisely to that ideal.
\end{remark}

The semigroup algebra has the following universal property:
any semigroup map from $\Gamma$ to the multiplicative semigroup of an $A$-algebra $B$ extends uniquely to an
homomorphism $A[t^\Gamma]\to B$ of $A$-algebras.

 An additive map of semigroups $\phi\colon
\Gamma\to \Gamma'$ induces a graded map of $A$-algebras
$A[\phi]\colon A[t^\Gamma]\to A[t^{\Gamma'}]$ which is injective
(resp. surjective) if $\phi$ is. If the semigroup $\Gamma$ is
torsion-free, the semigroup algebra $A[t^\Gamma]$ injects into
$A[t^{\Z^d}]=A[t_1^{\pm 1},\ldots ,t_d^{\pm 1}]$ and therefore is
an integral domain if $A$ is.
\begin{proposition} \label{product}
Let $\Gamma, \Gamma'$ be two semigroups.
The map of $A$-algebras
$$A[t^{\Gamma\times\Gamma'}]\rightarrow A[u^\Gamma]\otimes_A A[v^{\Gamma'}];\
\ t^{(\gamma,\gamma')}\mapsto u^\gamma\otimes_A v^{\gamma'}$$ is
an isomorphism.
\end{proposition}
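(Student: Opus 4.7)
The plan is to produce an explicit two-sided inverse by twice invoking the universal property of semigroup algebras just recalled in the excerpt. First I would check that the displayed map is a well-defined $A$-algebra homomorphism: the assignment $(\gamma,\gamma')\mapsto u^\gamma\otimes v^{\gamma'}$ is a homomorphism from the additive semigroup $\Gamma\times\Gamma'$ to the multiplicative semigroup of the commutative $A$-algebra $A[u^\Gamma]\otimes_A A[v^{\Gamma'}]$, since
$$(u^{\gamma_1}\otimes v^{\gamma_1'})\cdot (u^{\gamma_2}\otimes v^{\gamma_2'})\ =\ u^{\gamma_1+\gamma_2}\otimes v^{\gamma_1'+\gamma_2'}.$$
The universal property then extends it uniquely to the $A$-algebra map $\phi$ of the statement.

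For the inverse, I consider the two semigroup maps $\iota_1\colon \Gamma\to A[t^{\Gamma\times\Gamma'}]$, $\gamma\mapsto t^{(\gamma,0)}$, and $\iota_2\colon \Gamma'\to A[t^{\Gamma\times\Gamma'}]$, $\gamma'\mapsto t^{(0,\gamma')}$, both of which are semigroup homomorphisms because $(\gamma,0)+(\delta,0)=(\gamma+\delta,0)$ and similarly for the other factor. Each extends by the universal property to an $A$-algebra map $\tilde\iota_1\colon A[u^\Gamma]\to A[t^{\Gamma\times\Gamma'}]$, respectively $\tilde\iota_2\colon A[v^{\Gamma'}]\to A[t^{\Gamma\times\Gamma'}]$. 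As the common target is commutative, the images of $\tilde\iota_1$ and $\tilde\iota_2$ automatically commute, so the universal property of the tensor product of commutative $A$-algebras yields an $A$-algebra map $\psi\colon A[u^\Gamma]\otimes_A A[v^{\Gamma'}]\to A[t^{\Gamma\times\Gamma'}]$ satisfying $u^\gamma\otimes v^{\gamma'}\mapsto t^{(\gamma,0)}\,t^{(0,\gamma')}=t^{(\gamma,\gamma')}$.

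The last step is to check that $\psi\circ\phi$ and $\phi\circ\psi$ are the identity. Both are $A$-algebra endomorphisms, so by the uniqueness clauses in the relevant universal properties it suffices to verify that each fixes a generating set: by construction $\psi\circ\phi$ fixes every $t^{(\gamma,\gamma')}$ and $\phi\circ\psi$ fixes every elementary tensor $u^\gamma\otimes v^{\gamma'}$. There is essentially no technical obstacle; the only subtle point is the commutativity of the images of $\tilde\iota_1$ and $\tilde\iota_2$ needed to invoke the tensor product universal property, and this is automatic since $A[t^{\Gamma\times\Gamma'}]$ is commutative. Alternatively one could bypass universal properties entirely by observing that both algebras are free $A$-modules with bases naturally indexed by $\Gamma\times\Gamma'$, that the map sends basis to basis bijectively, and that multiplicativity follows immediately on basis elements.
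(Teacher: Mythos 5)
Your proof is correct and fills in the details of exactly the argument the paper has in mind, which is stated in one line as "This follows immediately from the universal property." Your unpacking via the two inclusions $\iota_1,\iota_2$ and the tensor-product universal property is the standard way to make that one-liner precise, and the alternative basis-counting remark at the end is a fine sanity check.
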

\textit{Proof.} This follows immediately from the universal
property. \hfill $\ {\Box}$

\section{Algebraic tori} \label{tori}

Let $k$ be a field. The  multiplicative group  $k^*$ of non-zero
elements of $k$ is equipped with the structure of algebraic group
over $k$, usually denoted by $\mathbf{G}_m := \mbox{Spec} k
[t^{\pm 1}]$. A $d$-dimensional \textit{algebraic torus}  over $k$
is an algebraic group isomorphic to a $(k^*)^d$.

If $M$ is a rank $d$ lattice then $T^M := \mathrm{Spec } k [t^M]$
is an algebraic torus over $k$. If we fix a basis $m_1, \dots,
m_d$ of the lattice $M$ we get a group isomorphism
\[ \Z^d \rightarrow
M, \quad a= (a_1, \dots, a_d) \mapsto \sum_{i=1, \dots d}  a_i m_i \]
 and
isomorphism of $k$-algebras $k [t_1^{\pm 1}, \dots, t_d^{\pm 1} ]
\rightarrow k [t^M]$ which induces an isomorphism $T^M (k)
\rightarrow (k^*)^d$.

\begin{remark}
More generally one can consider the scheme $\mathrm{Spec }  A
[t^M]$, which is an algebraic torus over $\mathrm{Spec } A$ for
any commutative ring $A$.
\end{remark}

A \textit{character} of the torus $T(k)$ is a group
homomorphism $T(k) \rightarrow k^*$. The set of characters
$\mbox{\rm Hom }_{\mathrm{alg. groups}}({T}^M, k^*)$ of $T^M(k)$
is a multiplicative group isomorphic to the lattice $M$ by the
homomorphism given by $m \mapsto t^m$ for $m \in M$.  We identify
the monomials $t^m$ of the semigroup algebra $k[t^M]$ with the
characters of the torus.

By the universal property of the semigroup algebras applied to $k
[t^M]$ we have a representation of $k$-rational points of $T^M$ as
group homomorphisms:
\[
T^M (k) = \mathrm{Hom}_{\mathrm{groups}} (M , k^*) = N \otimes_\Z
k^*,
\]
where $N := \mbox{\rm Hom }(M, \Z)$  is the dual lattice of $M$.
We denote by  $\langle \, , \, \rangle \colon N \times M
\rightarrow \Z$ the duality pairing between the lattices $N$ and
$M$.

A  one parameter subgroup of  $T^M (k)$ is  group  homomorphism
$k^* \rightarrow  T^M (k) $. Any vector $\nu \in N$ gives rise to
a one parameter subgroup  $\l_{\nu} $ which maps $z \in k^*$ to
the closed point of    $T^M (k)$ given by the homomorphism of
semigroups $M \rightarrow k^*$, $m \mapsto z^{\langle \nu, m
\rangle}$.  The set of one parameter subgroups $\mbox{\rm Hom
}_{\mathrm{alg. groups}}(k^* , {T}^M)$  forms a multiplicative
group, which is isomorphic to $N$ by the homomorphism given by
$\nu \mapsto \l_\nu$.

 \section{Affine toric varieties}\label{affine}

In this section  we consider a finitely generated subsemigroup
$\Gamma$ of a free abelian group $M$ of rank $d$. We assume in
addition that the group $\Z \Gamma$ generated by $\Gamma$ is equal
to $M$. We denote by $N$ the dual lattice of $M$. We introduce some useful
 notations.

\begin{notation}
We denote by $M_\R$ the $d$-dimensional real vector space $M
\otimes_\Z \R$. The semigroup $\Gamma$, viewed in $M_\R$, spans
the cone $\R_{\geq 0} \Gamma \subset M_\R$  which we denote also
by $\check{\sigma}$. The dual cone
of $\check{\s}$ is the cone $\s:= \{ \nu \in N_\R \mid \langle
\nu, \g \rangle \geq 0,\, \forall \g \in \check{\s} \}$.  We use
the notation $\t \leq \s$ to indicate that $\t$ is a \textit{face}
of $\s$.
 Any face of $\check{\s}$ is of the form
$\check{\s} \cap \t^\bot$ for a unique face $\t$ of $\s$, where
$\t^\bot$ is the linear subspace $ \{ \g \in M_\R \mid \langle
\nu, \g \rangle = 0, \, \forall \nu \in \t \}$.
\end{notation}

Let $\gamma_1,\ldots ,\gamma_{r}$ be generators of $\Gamma$.  Then the semigroup $\Gamma$ is the image of
$\N^{r}\subset \Z^r$ by the surjective linear map
$b\colon \Z^{r}\to M$ determined by $b(e_i)=\gamma_i$ where the $e_i,\ 1\leq i\leq {r}$
form the canonical basis of $\N^{r}$. The    kernel $\L$ of $b$ is isomorphic to $\Z^{{r}-d}$.

Let
us consider the map of semigroup algebras associated to the map
$b\vert \N^{r}\colon\N^{r}\to \Z^d$, whose image is $\Gamma$. It is a
map of $A$-algebras $A[U_1,\ldots ,U_{r}]\to A[t_1^{\pm 1},\ldots
,t_d^{\pm 1}]$. Its image is the subalgebra $A[t^\Gamma]$ of
$A[t_1^{\pm 1},\ldots ,t_d^{\pm 1}]$.

  An element $m\in \Z^{r}$
can be written uniquely $m=m_+-m_-$ where $m_+$ and $m_-$ have non
negative entries and disjoint support.

By construction, the
kernel of the surjection $A[U_1,\ldots ,U_{r}]\to A[t^\Gamma]$ is
the ideal generated by the binomials $(U^{m_+}-U^{m_-})$ where
$b(m_+)=b(m_-)$. It is the \textit{toric ideal} associated to the
map $ b$. Note that it is not in general generated by the
binomials associated to a basis of $\L$.
Since the algebra
$A[t^\Gamma]$ is an integral domain if $A$ is, the toric ideal is
a prime ideal in that case.

Conversely, assuming now that $A$ is an algebraically closed field
$k$, an ideal generated by binomials in $k[U_1,\ldots ,U_{r}]$ is
called a \textit{binomial ideal}. Those ideals are studied in
\cite{E-S}, where it is shown that a prime binomial ideal
$I\subset k[U_1,\ldots ,U_{r}] $ gives rise to a semigroup algebra
$k[U_1,\ldots ,U_{r}] /I\simeq k[t^\Gamma]$, where
$\Gamma=\N^{r}/_\sim$, and $\sim$ is an equivalence relation
associated to the binomial relations.   The \textit{affine toric
variety} ${T}^\Gamma : =\hbox{\rm Spec } k[t^\Gamma]$ is the
subvariety of the affine space $\A^{r} (k)$ defined by the
binomial equations generating the toric ideal.  By the universal
property of the semigroup algebra, there is a bijection
$$\{\hbox{\rm Closed points of } \hbox{\rm Spec }k[t^\Gamma]\}\leftrightarrow
\{ \hbox{\rm semigroup homomorphisms} \ \Gamma\to k\},$$  where
$k$ is considered as a semigroup with respect to multiplication (in particular $0\in\Gamma$ goes to $1\in k$).

In particular, the torus  ${T}^M (k) = \hbox{\rm
Hom}_{\mathrm{groups}} (M,k^*)$ is embedded in ${T}^{\Gamma}$, as
the principal open set where $t^{\g_1 } \cdots t^{\g_r } \ne 0$.

From the description of closed points of ${T}^\Gamma$ in terms of
homomorphisms of semigroups we have an action of the torus ${T}^M
(k) $ on ${T}^\Gamma (k)$.
Another way to describe this action, which shows that it is
algebraic, is to say that thanks to the universal property of
semigroup algebras it corresponds to the composed map of
$k$-algebras
$$k[t^\Gamma]\rightarrow k[t^\Gamma]\otimes_k k[t^\Gamma]\rightarrow k[t^M]\otimes_k k[t^\Gamma]$$
where the first map is determined by $t^\gamma\mapsto
t^\gamma\otimes_k t^\gamma$ and the second by the inclusion
$\Gamma\subset M$. The corresponding map ${T}^M \times {T}^\Gamma\to
{T}^\Gamma$ is the action.

 Let us now seek the invariant
subsets of $T^{\Gamma}$ under the torus action.

\begin{definition}
Given a semigroup $\Gamma$, a subsemigroup $F\subset \Gamma$ is a
\textit{face} of $\Gamma$ if whenever $x,y\in \Gamma$ satisfy
$x+y\in F$, then $x$ and $y$ are in $F$.
\end{definition}
Let us remark that this condition is equivalent to the fact that
the vector space of finite sums $\sum_{\delta\in \Gamma\setminus
F}a_\delta t^\delta$ is in fact a prime ideal $I_F$ of
$k[t^\Gamma]$. It also implies that $\Gamma\setminus F$ is a
subsemigroup of $\Gamma$ (which in general is not finitely generated) and that the Minkowski sum $\Gamma +
(\Gamma \setminus F) $ is contained in $\Gamma \setminus F$.

\begin{lemma} \label{face}
The faces of the semigroup $\Gamma$ are of the form $\Gamma \cap
\t^\bot$, for $\t \leq \s$.
\end{lemma}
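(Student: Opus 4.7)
The plan is to check both inclusions. For a fixed face $\tau \leq \sigma$, the set $\Gamma \cap \tau^\perp$ is immediately a semigroup face: if $x+y \in \Gamma \cap \tau^\perp$ with $x,y\in\Gamma\subset\check\sigma$, then any $\nu\in\tau\subseteq\sigma$ gives $\langle\nu,x\rangle,\langle\nu,y\rangle \geq 0$ summing to $0$, so both vanish. The substantive direction is the converse, for which I would manufacture a suitable $\tau$ from a given semigroup face $F$.

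The construction I propose is to pick a ``witness'' in the relative interior of the real cone spanned by $F$. Fixing semigroup generators $\gamma_1,\ldots,\gamma_r$ of $\Gamma$, the face axiom applied to $f = \sum n_i \gamma_i \in F$ forces every $\gamma_i$ with $n_i>0$ to lie in $F$, so $F$ is already generated by $G := \{\gamma_i : \gamma_i \in F\}$. Set $\gamma_0 := \sum_{\gamma_i\in G}\gamma_i \in F$ and $\tau := \sigma \cap \gamma_0^\perp$. Since $\gamma_0 \in \check\sigma$, the form $\langle\cdot,\gamma_0\rangle$ is non-negative on $\sigma$, so $\tau$ is a face. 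For $\nu \in \tau$ the identity $\sum_{\gamma_i\in G}\langle\nu,\gamma_i\rangle = \langle\nu,\gamma_0\rangle = 0$ with non-negative summands forces $\nu \in \gamma_i^\perp$ for each $\gamma_i \in G$, hence $\nu \in F^\perp$. This gives $F \subseteq \Gamma \cap \tau^\perp$.

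The heart of the argument is the reverse inclusion $\Gamma \cap \tau^\perp \subseteq F$. Put $\check\tau := \check\sigma \cap \tau^\perp$, the face of $\check\sigma$ dual to $\tau$; by construction $\gamma_0$ sits in its relative interior. For $\gamma \in \Gamma \cap \check\tau$, openness of the relative interior furnishes a positive rational $\epsilon$ with $\gamma_0 - \epsilon\gamma \in \check\tau$; writing $\epsilon = m/N$ gives $N\gamma_0 - m\gamma \in \check\sigma \cap M$, and Lemma \ref{reduc} then supplies a positive integer $r$ with $rN\gamma_0 - rm\gamma \in \Gamma$. Finally the decomposition $rN\gamma_0 = rm\gamma + (rN\gamma_0 - rm\gamma)$ realizes an element of $F$ as a sum of two elements of $\Gamma$, so the face axiom forces $rm\gamma \in F$, and iterating the axiom on $rm\gamma = \gamma + \cdots + \gamma$ yields $\gamma \in F$. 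The main obstacle is precisely the three-step passage ``relative interior $\Rightarrow$ rational $\Rightarrow$ integer in $\check\sigma\cap M$ $\Rightarrow$ element of $\Gamma$'', which is where the convex, lattice, and saturation aspects of the hypotheses must interact; once this is achieved the purely combinatorial face axiom finishes the job.
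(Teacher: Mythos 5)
Your proof is correct, and it takes a genuinely more elementary route than the paper's. Both arguments pivot on finding $\gamma_0 \in F$ in the relative interior of the corresponding dual face and then running the face axiom, but where you construct $\gamma_0$ and $\tau = \sigma \cap \gamma_0^\perp$ explicitly from the generators of $F$ (one checks easily that your $\tau$ coincides with the paper's choice of the minimal face of $\check\sigma$ containing $F$), the paper posits them existentially and then reduces to the case $\tau = 0$. The real divergence is the key technical input. The paper invokes Theorem 1.9 of Kaveh--Khovanskii to produce a $\delta_0 \in \Gamma \cap \int(\check\sigma)$ with $\delta_0 + (\check\sigma \cap M) \subset \Gamma$, which lets it decompose a positive multiple of $\gamma_0$ as $\gamma + (\delta_0 + c)$ with $\delta_0 + c \in \Gamma$ and then apply the face axiom once. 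You instead use only Lemma \ref{reduc} (saturation): openness of the relative interior gives $N\gamma_0 - m\gamma \in \check\sigma \cap M$, saturation gives $r(N\gamma_0 - m\gamma) \in \Gamma$, and the decomposition $rN\gamma_0 = rm\gamma + (rN\gamma_0 - rm\gamma)$ plus two applications of the face axiom finishes. Your version buys the statement with strictly less machinery --- only the elementary saturation lemma already proved in the paper, rather than a nontrivial external result about conductor-type containments of semigroups --- at the modest cost of one extra iteration of the face axiom on $rm\gamma = \gamma + \cdots + \gamma$. This is a clean improvement and worth noting.
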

\textit{Proof.} Let $F$ be a face of the semigroup $\Gamma$. Then
there is a face $\check{\s} \cap \t^\bot$ of $\check{\s}$  which
contains $F$ and is of minimal dimension. Then $F$ is also a face
of the semigroup $\Gamma \cap \t^\bot$ and there is an element
$\g_0 \in F$ which belongs to the relative interior of the cone
$\check{\s} \cap \t^\bot$. Under these conditions is enough to
prove that if $\t= 0$ then $F = \Gamma$.

Notice that if $\g \in \Gamma $ and if $(\g + \Gamma) \cap
\Z_{\geq 0} \g_0 \ne \emptyset$ then $\g \in F$ since $F$ is a
face and $\g_0 \in F$. By Theorem 1.9 \cite{K-K} there is $\d_0
\in \Gamma \cap \int (\check{\s} ) $ such that $\d_0 + \check{\s}
\cap M \subset \Gamma$.  We deduce that the intersection $(\g +
\d_0 + \check{\s} \cap M ) \cap \Z_{\geq 0} \g_0$ is non-empty,
for any $\g \in \Gamma$,
 since $\g_0 \in \int (\check{\s}) \cap \Gamma$. \hfill ${\Box}$

\begin{notation}
If $\t \leq \s$ the set $\Gamma \cap \t^\bot$ is a subsemigroup of
finite type of $\Gamma$. If $\t \leq \s$ the lattice $M (\t,
\Gamma)$  spanned by
 $\Gamma \cap \t^\bot$
is a sublattice of finite index of $M (\t) := M \cap
 \t^\bot$.
\end{notation}

\begin{remark} \label{orb-clos}
The torus of the affine toric variety ${T}^{\Gamma \cap \t^\bot}$
is $ T^{M(\t, \Gamma)}$. If $A$ is a commutative ring, the
homomorphism of $A$-algebras $ A[\Gamma] \rightarrow A[\Gamma \cap
\t^\bot] \cong A [\Gamma] / I_{\Gamma \cap \t^\bot}$,   is
surjective and defines a closed embedding \[ i_\t: {T}^{\Gamma
\cap \t^\bot} \hookrightarrow {T}^{\Gamma} \]  over $\mathrm{Spec
} A$.  If $k =A$ the image by the embedding $i_\t$ of a closed
point $u \in T^{\Gamma \cap \t^\bot} (k)$ (or $u \in  T^{M( \t,
\Gamma)} (k) $) is the semigroup homomorphism $i_\t (u) : \Gamma
\rightarrow k$ given by
\[
\g \mapsto \left\{
\begin{array}{cr}
u(\g) & \mathrm{if} \g \in \t^\bot,
\\
 0  & \mathrm{otherwise}.
\end{array}
\right.
\]
\end{remark}

\begin{proposition} \label{orbit-aff}
The map
\[
\t \mapsto \orb (\t, \Gamma):= i_\t( {T}^{M(\t, \Gamma)})  \quad
(\mbox{ resp. } \t \mapsto i_\tau ({T}^{\Gamma \cap \t^\bot})\,) \] defines
a bijection (resp. inclusion-reversing bijection) between
 the faces of $\s$ and the
orbits (resp. the closures of the orbits) of the torus action on
${T}^\Gamma$.
\end{proposition}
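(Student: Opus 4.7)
The plan is to parametrize the closed points of $T^\Gamma$ by semigroup homomorphisms $\varphi\colon \Gamma\to (k,\cdot)$ as in the earlier discussion, and to attach to each such $\varphi$ its \emph{support} $S_\varphi := \{\gamma\in\Gamma\mid \varphi(\gamma)\neq 0\}$. Because $k$ is a field, the relation $\varphi(\gamma_1+\gamma_2)=\varphi(\gamma_1)\varphi(\gamma_2)$ immediately forces $S_\varphi$ to satisfy the face condition of the preceding definition, so $S_\varphi$ is a face of the semigroup $\Gamma$. By Lemma~\ref{face} there is then a unique face $\tau\leq\sigma$ with $S_\varphi=\Gamma\cap\tau^\bot$, and this assigns a face of $\sigma$ to every closed point of $T^\Gamma$.

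The first main step is to show that two closed points $\varphi_1,\varphi_2$ lie in the same $T^M$-orbit if and only if they share the same support. The ``only if'' direction is immediate, since the action multiplies each coordinate by a nonzero scalar. For the converse, if $F = S_{\varphi_1} = S_{\varphi_2} = \Gamma\cap\tau^\bot$, the ratio $\gamma\mapsto\varphi_2(\gamma)\varphi_1(\gamma)^{-1}$ is a well-defined semigroup homomorphism $F\to k^*$ which extends uniquely to a group homomorphism $M(\tau,\Gamma)\to k^*$. The one genuinely nontrivial point, and the main obstacle of the proof, is to extend this further to a group homomorphism $t\colon M\to k^*$: this rests on $k^*$ being a divisible abelian group (which holds because $k$ is algebraically closed), hence an injective object in the category of abelian groups, so that any homomorphism defined on the subgroup $M(\tau,\Gamma)\subseteq M$ lifts. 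By construction $t\cdot\varphi_1=\varphi_2$. Combined with the description of $i_\tau$ in Remark~\ref{orb-clos}, this identifies the set of closed points with support $\Gamma\cap\tau^\bot$ with $i_\tau(T^{M(\tau,\Gamma)})=\orb(\tau,\Gamma)$, which yields the first bijection.

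For the orbit-closure part I would use that $T^{M(\tau,\Gamma)}$ is the principal open subset of $T^{\Gamma\cap\tau^\bot}$ on which the characters $t^\gamma$ attached to generators of $\Gamma\cap\tau^\bot$ are all invertible; it is therefore Zariski dense in $T^{\Gamma\cap\tau^\bot}$. Since $i_\tau$ is a closed embedding by Remark~\ref{orb-clos}, taking closures in $T^\Gamma$ gives
\[
\overline{\orb(\tau,\Gamma)} \;=\; i_\tau\bigl(\overline{T^{M(\tau,\Gamma)}}\bigr)\;=\; i_\tau(T^{\Gamma\cap\tau^\bot}).
\]
Finally, inclusion-reversing is forced by the face dictionary: if $\tau'\leq\tau$ then $\tau^\bot\subseteq(\tau')^\bot$, hence $\Gamma\cap\tau^\bot\subseteq\Gamma\cap(\tau')^\bot$; the defining ideal $I_{\Gamma\cap\tau^\bot}$ therefore contains $I_{\Gamma\cap(\tau')^\bot}$ in $k[t^\Gamma]$, and passing to vanishing loci yields $i_\tau(T^{\Gamma\cap\tau^\bot})\subseteq i_{\tau'}(T^{\Gamma\cap(\tau')^\bot})$. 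Apart from the character-extension step, the argument is a formal manipulation of the semigroup/ideal correspondence set up in the previous sections.
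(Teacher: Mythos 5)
Your proposal is correct and follows the same overall strategy as the paper's proof: parametrize closed points by semigroup homomorphisms $\varphi\colon\Gamma\to k$, read off the support as a face of $\Gamma$, translate to a face of $\sigma$ via Lemma~\ref{face}, and then show that the set of points with a fixed support $\Gamma\cap\tau^\bot$ is exactly one $T^M$-orbit, identified via $i_\tau$ with $T^{M(\tau,\Gamma)}$. Where your argument goes beyond the paper is in making explicit the one genuinely non-formal step. The paper writes ``It follows that the orbit of the point defined by $u$ by the action of $T^{M}$ coincides with the image by $i_\t$ of the orbit $T^{M(\t,\Gamma)}$\ldots'' without justification, and that is precisely the claim that the $T^M$-orbit of a point of $i_\tau(T^{M(\tau,\Gamma)})$ is all of $i_\tau(T^{M(\tau,\Gamma)})$. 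Your proof supplies the missing ingredient: the quotient character $\varphi_2\varphi_1^{-1}$ on $\Gamma\cap\tau^\bot$ extends to $M(\tau,\Gamma)$ and then, because $k^*$ is divisible (hence an injective $\Z$-module, using that $k$ is algebraically closed), further to a character of the full lattice $M$, giving the required torus element. This is exactly the step that can silently fail over a non--algebraically-closed field since $M(\tau,\Gamma)$ is in general only a finite-index sublattice of $M(\tau)=M\cap\tau^\bot$, so $M/M(\tau,\Gamma)$ has torsion and extending across it requires taking roots in $k^*$. The orbit-closure and inclusion-reversal parts of your argument also match the paper's intent (invocation of Remark~\ref{orb-clos}), again with the density of $T^{M(\tau,\Gamma)}$ in $T^{\Gamma\cap\tau^\bot}$ spelled out. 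In short: same route, but your writeup is more rigorous at the critical point.
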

\textit{Proof.}  Let $u: \Gamma \rightarrow k$ be a semigroup
homomorphism. Then $u^{-1} (k^*)$ is a face of $\Gamma$, hence of
the form $\Gamma \cap \t^\bot$ for some face $\t $ of $\s$. Any
such $u$ extends in a unique manner to a group homomorphism $M(\t,
\Gamma) \rightarrow k^*$ defining an element of the torus
${T}^{M(\t, \Gamma)}$ of the affine toric variety ${T}^{\Gamma
\cap \t^\bot}$. Conversely, given   a group homomorphism  $u
\colon M(\t, \Gamma) \rightarrow k^*$ we define a semigroup
homomorphism $ i_\t (u) : \Gamma \rightarrow k$  as indicated
above.

It follows that the orbit of the point defined by $u$ by the
action of $T^{M}$ coincides with the image by $i_\t$ of the orbit
${T}^{M(\t, \Gamma)}$ of the point $u_{| \Gamma \cap \t^\bot}
\colon \Gamma \cap \t^\bot \rightarrow k^*$ on the toric variety
${T}^{\Gamma \cap \t^\bot}$.  The rest of the assertion follows
from Remark \ref{orb-clos}. \hfill $\ {\Box}$

The partition induced by the orbits of the torus action on
${T}^{\Gamma}$  is of the form:
\begin{equation} \label{orbit}
{T}^{\G} = \bigsqcup_{\t \leq \s} \orb (\t, \G).
\end{equation}

\begin{proposition} \label{affine-inv}
If $X$ is an affine toric variety with torus ${T}^M$ then $X$ is
${T}^M$-equivariantly isomorphic to ${T}^{\Gamma}$, where $\Gamma
\subset M$ a semigroup  of finite type such that $\Z \Gamma =M$.
\end{proposition}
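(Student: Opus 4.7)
The plan is to recover the semigroup $\Gamma$ intrinsically from the coordinate ring $k[X]$ by exploiting the $T^M$-action. Since $T^M$ is a split torus, any rational $T^M$-comodule decomposes into weight spaces indexed by $M$; applied to the coaction $k[X]\to k[t^M]\otimes_k k[X]$ coming from the given action $T^M\times X\to X$, this yields an $M$-grading
\[
k[X] \;=\; \bigoplus_{m\in M} k[X]_m,
\]
where $k[X]_m$ is the space of semi-invariants of weight $m$.

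Next I would use the hypothesis that $T^M\hookrightarrow X$ is an open (dense) immersion. This gives an injection $k[X]\hookrightarrow k[T^M]=k[t^M]=\bigoplus_{m\in M} k\cdot t^m$ of $M$-graded $k$-algebras. Since each graded piece on the right is one-dimensional, each $k[X]_m$ is either $0$ or equal to $k\cdot t^m$. Setting
\[
\Gamma \;:=\; \{\,m\in M : k[X]_m\neq 0\,\},
\]
one obtains $k[X]=\bigoplus_{m\in\Gamma} k\cdot t^m$, i.e.\ $k[X]=k[t^\Gamma]$ as a subalgebra of $k[t^M]$. Multiplicativity of the grading, together with $1\in k[X]$, immediately shows that $\Gamma$ is a subsemigroup of $M$ containing $0$.

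To finish, I would verify the two remaining properties. For finite generation of $\Gamma$: pick finitely many $k$-algebra generators $f_1,\dots,f_s$ of $k[X]$ (possible because $X$ is affine), and decompose each $f_i$ into its $M$-homogeneous components, which are scalar multiples of monomials $t^{m}$ with $m\in\Gamma$. Let $\Gamma'\subset\Gamma$ be the subsemigroup generated by all these exponents; then $\Gamma'$ is finitely generated, and expanding any $t^m\in k[X]$ as a polynomial in the $f_i$ and matching weights forces $m\in\Gamma'$, so $\Gamma'=\Gamma$. For $\Z\Gamma=M$: openness of $T^M$ in $X$ gives that $k[t^\Gamma]$ and $k[t^M]$ share the same field of fractions; since $\mathrm{Frac}\,k[t^\Gamma]=k(t^{\Z\Gamma})\subset k(t^M)$, this forces $\Z\Gamma=M$.

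Equivariance of the induced isomorphism $X\cong T^\Gamma=\mathrm{Spec}\,k[t^\Gamma]$ is automatic: the coaction of $T^M$ on $k[X]=k[t^\Gamma]$ is, by construction, the restriction of the standard coaction on $k[t^M]$ along the inclusion $\Gamma\subset M$, and this is precisely the action of $T^M$ on $T^\Gamma$ described earlier in Section \ref{affine}. The only substantive point is the existence of the weight decomposition of $k[X]$; once the action is known to be algebraic this is standard for a split torus in any characteristic, so I expect no real obstacle beyond correctly formalizing the passage from the $T^M$-action to the $M$-grading.
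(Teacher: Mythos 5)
Your proof is correct, and it fills in what the paper leaves implicit: the paper simply cites Proposition~2.4, Chapter~5 of \cite{GKZ} without reproducing the argument, and the weight-space decomposition you use is precisely the standard route taken there. A couple of points are worth making explicit when you write this up. First, to conclude that the inclusion $k[X]\hookrightarrow k[t^M]$ is a map of $M$-graded algebras you must use that the open immersion $T^M\hookrightarrow X$ is $T^M$-equivariant; this is indeed part of Definition~\ref{usual} (the action extends translation on $T^M$), but it is the hinge on which the one-dimensionality of each $k[X]_m$ turns, so it deserves a sentence. Second, the final implication $k(t^{\Z\Gamma})=k(t^M)\Rightarrow \Z\Gamma=M$ is true but not completely automatic: one first compares transcendence degrees to see $\Z\Gamma$ has full rank $d$, and then uses that $k[t^{\Z\Gamma}]$ is integrally closed in its fraction field, so for $m\in M$ with a positive multiple in $\Z\Gamma$, $t^m\in k(t^{\Z\Gamma})$ forces $t^m\in k[t^{\Z\Gamma}]$ and hence $m\in\Z\Gamma$. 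Alternatively, and a little more in the spirit of the paper, one can observe that the dense $T^M$-orbit of $\operatorname{Spec}k[t^\Gamma]$ is $T^{\Z\Gamma}$, so the hypothesis that the dense open orbit is $T^M$ directly forces $\Z\Gamma=M$. With those two remarks spelled out, the argument is complete.
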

{\textit Proof.} This is well-known (see  Proposition
2.4, Chapter 5 of \cite{GKZ}). \hfill $\ {\Box}$

We characterize the affine $T^M$-invariant open subsets of
$T^{\Gamma}$.
\begin{definition}
For any face $\t$ of $\s$ the set
\begin{equation} \label{Gamma_tau}
 \Gamma_\t := \Gamma + M ( \t,\Gamma)
\end{equation} is a semigroup of finite type
generating the lattice $M$.
\end{definition}
Notice that the cone $\R_{\geq 0} \Gamma_\t $ is equal to
$\check{\t}$ and  if
 $\t \leq \s$ the set $\int ( \check{\s} \cap
\t^\bot ) \cap \Gamma$ is non empty ($\int$ denotes relative
interior).

\begin{lemma} $\,$ \label{conditions-aff}
\begin{enumerate}
\item[{\rm i.}] The minimal face of the semigroup $\Gamma$ is a
sublattice of $M$  equal to $\Gamma \cap \s^\bot$.

\item[{\rm ii.}] For any $m \in \Gamma$ in the relative interior
of $( \check{\s} \cap \t^\bot )$  we have that
\[
\Gamma_\t = \Gamma + \Z_{\geq 0} (-m).
\]
\item[{\rm iii.}] If $\t \leq \theta \leq \s$ we have that $M (\t,
\Gamma_\theta) = M(\t , \Gamma_\t)$ and $\Gamma_\t = \Gamma_\theta
+  M(\t, \Gamma_\theta)$.
\end{enumerate}
\end{lemma}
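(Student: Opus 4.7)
The plan is to treat the three parts in order: (i) is a short saturation argument, (ii) is the heart of the lemma and rests on the Khovanskii--Kushnirenko-type Theorem 1.9 of \cite{K-K} already invoked in the proof of Lemma \ref{face}, and (iii) reduces to elementary set manipulations once the inclusion $M(\theta,\Gamma)\subset M(\t,\Gamma)$ is established.

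For (i), Lemma \ref{face} identifies the faces of $\Gamma$ with $\Gamma\cap\t^\perp$ as $\t$ ranges over faces of $\s$; since $\t^\perp$ shrinks when $\t$ grows, the minimal face is $F=\Gamma\cap\s^\perp$. Now $\s^\perp$ is exactly the linearity space $\check\s\cap(-\check\s)$ of $\check\s$, so for any $\g\in F$ we have $-\g\in\s^\perp\subset\check\s\cap M$, which by Lemma \ref{reduc} equals the saturation of $\Gamma$ in $M$. Hence $-N\g\in\Gamma$ for some $N\geq 1$, and then $-\g=(N-1)\g+(-N\g)\in\Gamma\cap\s^\perp=F$, proving $F$ is closed under negation and therefore is a sublattice.

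For (ii), I would first verify $\R_{\geq 0}(\Gamma\cap\t^\perp)=\check\s\cap\t^\perp$: writing $\alpha=\sum a_i\g_i\in\check\s\cap\t^\perp$ with $a_i\geq 0$ and $\g_i\in\Gamma$, the identity $0=\langle\nu,\alpha\rangle=\sum a_i\langle\nu,\g_i\rangle$ for $\nu\in\t$, combined with the nonnegativity of each $\langle\nu,\g_i\rangle$, forces $\g_i\in\t^\perp$ whenever $a_i>0$. Applying Theorem 1.9 of \cite{K-K} to the finitely generated semigroup $\Gamma\cap\t^\perp$ inside the lattice $M(\t,\Gamma)$ it generates then yields $\d_0^{(\t)}$ in the relative interior of $\check\s\cap\t^\perp$ such that $\d_0^{(\t)}+(\check\s\cap M(\t,\Gamma))\subset\Gamma\cap\t^\perp$. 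Since $m$ is itself in the relative interior of $\check\s\cap\t^\perp$, for any $\mu\in M(\t,\Gamma)$ the vector $nm+\mu-\d_0^{(\t)}$ falls inside $\check\s\cap M(\t,\Gamma)$ for all sufficiently large $n$, giving $nm+\mu\in\Gamma\cap\t^\perp$. The inclusion $\Gamma+\Z_{\geq 0}(-m)\subset\Gamma_\t$ is immediate since $-m\in M(\t,\Gamma)$, while for the reverse any $\g+\mu\in\Gamma_\t$ rewrites as $(\g+nm+\mu)-nm$ with $n$ chosen large enough that $\g+nm+\mu\in\Gamma$.

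For (iii), $\t\leq\theta$ gives $\theta^\perp\subset\t^\perp$, hence $\Gamma\cap\theta^\perp\subset\Gamma\cap\t^\perp$ and in particular $M(\theta,\Gamma)\subset M(\t,\Gamma)\subset\t^\perp$. An element $\g+\mu$ of $\Gamma_\theta=\Gamma+M(\theta,\Gamma)$ therefore lies in $\t^\perp$ iff $\g$ does, so $\Gamma_\theta\cap\t^\perp=(\Gamma\cap\t^\perp)+M(\theta,\Gamma)$ generates the same sublattice of $M(\t)$ as $\Gamma\cap\t^\perp$ alone, yielding $M(\t,\Gamma_\theta)=M(\t,\Gamma)$; the specialization $\theta=\t$ also gives $M(\t,\Gamma_\t)=M(\t,\Gamma)$. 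The second equality then telescopes: $\Gamma_\theta+M(\t,\Gamma)=\Gamma+M(\theta,\Gamma)+M(\t,\Gamma)=\Gamma+M(\t,\Gamma)=\Gamma_\t$. The main obstacle I anticipate is the step in (ii) that promotes an element of the saturation $\check\s\cap M$ to an element of $\Gamma$ itself: this is exactly where the \emph{relative interior} hypothesis on $m$ is indispensable, allowing sufficiently large multiples of $m$ to absorb both an arbitrary $\mu\in M(\t,\Gamma)$ and the Khovanskii--Kushnirenko shift $\d_0^{(\t)}$ into the absorbing region provided by Theorem 1.9.
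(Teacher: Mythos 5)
Your proof is correct, but it takes a genuinely different route from the paper for parts (i) and (ii); part (iii) is essentially the same. For (i), after identifying the minimal face as $F=\Gamma\cap\s^\perp$, the paper reduces to the case $\check\s=M_\R$ via the saturation and then to rank-one semigroups, where the claim follows from B\'ezout. You instead observe that $\s^\perp=\check\s\cap(-\check\s)$ is contained in the saturation $\check\s\cap M$, so $-\g$ has a multiple $-N\g\in\Gamma$, whence $-\g=(N-1)\g+(-N\g)\in F$; this is a cleaner and more self-contained argument that avoids the reduction entirely. For (ii), the paper applies part (i) to the auxiliary semigroup $\Gamma+\Z_{\geq 0}(-m)$, observing it spans $\check\t$, and identifies its minimal face --- which (i) guarantees is a lattice --- with $M(\t,\Gamma)$; equality $\Gamma_\t=\Gamma+\Z_{\geq 0}(-m)$ is then immediate by substitution. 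You instead run a direct absorption argument: apply the Khovanskii--Kushnirenko theorem to the finitely generated semigroup $\Gamma\cap\t^\perp$ inside the lattice it generates, obtain a translate $\delta_0^{(\t)}+(\check\s\cap M(\t,\Gamma))\subset\Gamma\cap\t^\perp$, and use the fact that $m$ lies in the relative interior of $\check\s\cap\t^\perp$ to push $nm+\mu$ past $\delta_0^{(\t)}$ for $n\gg 0$, giving $nm+\mu\in\Gamma$ for every $\mu\in M(\t,\Gamma)$. The paper's approach is shorter and exploits the structural content of (i); yours makes the absorption mechanism explicit, which is arguably easier to verify line by line, at the cost of invoking K--K a second time and keeping more bookkeeping on hand. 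One minor remark: when you check $\R_{\geq 0}(\Gamma\cap\t^\perp)=\check\s\cap\t^\perp$ and invoke K--K, it is worth noting explicitly that this cone is full-dimensional in $M(\t,\Gamma)_\R=\t^\perp$, which follows since $\check\s$ is full-dimensional in $M_\R$ (as $\Z\Gamma=M$) and $\check\s\cap\t^\perp$ is the face of $\check\s$ dual to $\t$.
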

\textit{Proof.} i.~By Lemma \ref{face} the correspondence $\t
\mapsto \Gamma \cap \t^\bot$ is a bijection between the  faces of
the cone $\s$  and the faces of the semigroup $\Gamma$. By duality
the minimal face of $\Gamma$ is equal to $\Gamma \cap \s^\bot$. It
is enough to prove that if $\Gamma$ is a semigroup such that
$\Z\Gamma = M$ and $\R_{\geq 0}\Gamma = M_\R$ then
$\Gamma = M$. Since $M$ is the saturation of $\Gamma$ the
assertion reduces to the case of rank one semigroups, for which it
is elementary by Bezout identity.

ii.~If $m  \in \int ( \check{\s} \cap \t^\bot ) \cap \Gamma$ then
the semigroup $\Gamma + \Z_{\geq 0} (-m) \subset M$ spans the cone
$\check{\t} = \check{\s} + \t^\bot \subset M_\R$. By i.~the
minimal face of this semigroup is the lattice $(\Gamma + \Z_{\geq
0} (-m)) \cap \t^\bot$ which coincides by definition with the
lattice $ M(\t, \Gamma)$.

iii.~The lattices $M(\t, \Gamma_\theta)$ and $M(\t, \Gamma)$ are
both generated by $\Gamma \cap \t^\bot$ hence are equal. We have
that $\Gamma_\t = \Gamma_\theta +  M(\t, \Gamma_\theta)$ since
$\theta^\bot \subset \t^\bot$. \hfill $\ {\Box}$

\begin{lemma} \label{open-embedding}
If  $\t \leq \s$ the inclusion of semigroups $\Gamma \subset
\Gamma_\t$ determines a ${T}^M$-equivariant  embedding
${T}^{\Gamma_{\t}} \subset {T}^{\Gamma}$ as an affine open set.
Conversely, if $X \subset T^{\Gamma}$ is  a ${T}^M$-equivariant
embedding of an affine open set then there is a unique  $\t \leq
\s$ such that $X$ is ${T}^M$-equivariantly isomorphic to
$T^{\Gamma_{\t}}$.
\end{lemma}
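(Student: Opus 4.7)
The plan is to obtain the forward statement as a direct consequence of Lemma \ref{conditions-aff}(ii), and to handle the converse by recovering the face $\t$ from the (unique) closed orbit of $X$ viewed as a $T^M$-stable affine open of $T^\Gamma$. For the forward direction, I would choose $m \in \int(\check{\s}\cap\t^\bot)\cap \Gamma$, which is non-empty as noted after the definition of $\Gamma_\t$. By Lemma \ref{conditions-aff}(ii), $\Gamma_\t = \Gamma + \Z_{\geq 0}(-m)$, so $k[t^{\Gamma_\t}] = k[t^\Gamma][t^{-m}]$ is the localization of $k[t^\Gamma]$ at $t^m$. Passing to spectra identifies $T^{\Gamma_\t}$ with the principal affine open $D(t^m)\subset T^\Gamma$; since $t^m$ is a character, $D(t^m)$ is $T^M$-stable and the embedding is automatically equivariant.

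For the converse, start with $X$ a $T^M$-invariant affine open of $T^\Gamma$. The open orbit $T^M = \orb(\{0\},\Gamma)$ is dense, so $X$ meets it, and $T^M$-invariance forces $T^M \subset X$. Hence $X$ is itself an affine toric variety with torus $T^M$, and by Proposition \ref{affine-inv} one has $X \cong T^{\Gamma'}$ for some finitely generated semigroup $\Gamma\subset\Gamma'\subset M$ with $\Z\Gamma' = M$. Applying Proposition \ref{orbit-aff} to $X$ produces a unique closed orbit, which under the embedding $X\hookrightarrow T^\Gamma$ must coincide with $\orb(\t^*,\Gamma)$ for a uniquely determined face $\t^*\leq\s$.

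The heart of the argument is then to show $X = T^{\Gamma_{\t^*}}$ by comparing the orbits they contain. Using that $\orb(\t,\Gamma)$ lies in $\overline{\orb(\t',\Gamma)}$ precisely when $\t'\leq\t$ (Proposition \ref{orbit-aff}), the set $F = \{\t\leq\s : \orb(\t,\Gamma)\subset X\}$ is seen to be downward closed: if $F$ contained $\t$ but not some $\t'\leq\t$, then $\orb(\t,\Gamma)\subset\overline{\orb(\t',\Gamma)}\subset T^\Gamma\setminus X$, a contradiction. Since $\t^*$ is the maximum of $F$ (as it indexes the closed orbit of $X$), one concludes $F = \{\t:\t\leq\t^*\}$. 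The forward direction applied to $\t^*$ shows that $T^{\Gamma_{\t^*}}$ has the same index set of orbits, so $X$ and $T^{\Gamma_{\t^*}}$ coincide as subsets of $T^\Gamma$, and hence as open subschemes. Uniqueness of $\t^*$ is immediate, since distinct faces produce distinct principal down-sets and therefore distinct orbit sets.

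The most delicate point I anticipate is justifying that $X$ inherits the structure of an affine toric variety with a unique closed orbit; this rests on $X$ containing the big torus $T^M$ and on transferring the orbit stratification between $X$ and $T^\Gamma$ via the embedding, after which the order-reversing bijection of Proposition \ref{orbit-aff} pins down $\t^*$ unambiguously.
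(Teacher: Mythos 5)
Your forward direction is essentially identical to the paper's: both invoke Lemma~\ref{conditions-aff}(ii) to realize $T^{\Gamma_\t}$ as the principal open set $D(t^m)$, which is automatically $T^M$-stable because $t^m$ is a character.

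For the converse, you and the paper both begin by invoking Proposition~\ref{affine-inv} to write $X \cong T^\Lambda$ with $\Gamma\subset\Lambda\subset M$ and $\Z\Lambda=M$, but then the arguments diverge. The paper sets $\check\theta=\R_{\geq 0}\Lambda$, lets $\t$ be the smallest face of $\s$ containing $\theta$, and reduces to the extremal case $\int(\theta)\cap\int(\s)\neq\emptyset$; there it proves $\Lambda=\Gamma$ by an ideal-theoretic computation showing $1\notin I_F\, k[t^\Lambda]$ (so the closed orbit $\orb(\s,\Gamma)$ lies in $T^\Lambda$) and then using that every orbit closure in $T^\Gamma$ contains that closed orbit. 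You instead work entirely inside the orbit poset: you identify the unique closed orbit of $X$ with some $\orb(\t^*,\Gamma)$, observe that the set $F$ of faces whose orbits lie in $X$ is a down-set with maximum $\t^*$, and match $F$ with the orbit set of $T^{\Gamma_{\t^*}}$ to conclude equality of open subsets. Both approaches are valid and rest on the inclusion-reversing bijection of Proposition~\ref{orbit-aff}; the paper's version is more compact and algebraic (it never names $F$), while yours makes the combinatorics of the face lattice fully explicit and avoids the reduction step and the computation with $I_F$. The one point you compress is why the orbit set of $T^{\Gamma_{\t^*}}$, viewed inside $T^\Gamma$, is exactly $\{\t:\t\leq\t^*\}$; this needs the compatibility $M(\theta,\Gamma_{\t^*})=M(\theta,\Gamma_\theta)$ from Lemma~\ref{conditions-aff}(iii) (or, equivalently, re-running your closed-orbit argument on $T^{\Gamma_{\t^*}}$), and is worth a sentence, but it is a small gap, not an error.
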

\textit{Proof.} By Lemma \ref{conditions-aff} we have that
$\Gamma_\t = \Gamma + \Z_{\geq 0} (-m)$. More generally  if $\g
\in \Gamma$ and  $f = t^\g$,  the localization ${T}^{\Gamma}_f =
\mbox{\rm Spec \, } k [ \Gamma ]_f $ is equal to ${T}^{\Gamma +
(-\g)\Z_{\geq 0}}$ and it is embedded in ${T}^{\Gamma}$ as a
principal open set.

Conversely, an affine ${T}^M$-invariant open subset  of
${T}^{\Gamma}$ is an affine toric variety for the torus ${T}^M$
hence it is of the form ${T}^{\Lambda}$, for $\Lambda \subset M$ a
subsemigroup of finite type, such that $\Z \Lambda  = M$
(see Proposition \ref{affine-inv}). We denote the cone $ \R_{\geq
0} \Lambda$ by $\check{\theta}$. Since the embedding
${T}^{\Lambda} \subset {T}^{\Gamma}$ is ${T}^M$ equivariant it is
defined by the inclusion of algebras $k [t^{\Gamma}] \rightarrow k
[ t^{\Lambda} ]$ corresponding to the inclusion of semigroups
$\Gamma \subset \Lambda$. We deduce that $\check{\s} \subset
\check{\theta}$ and hence that $\theta \subset \s$ by duality. We
prove that if $\t$ is the smallest face of $\s$ which contains
$\theta$ then $\Lambda = \Gamma_{\t}$. It is enough to prove that
if $\int{( \theta )} \cap \int (\s)  \ne \emptyset$ then $\Lambda =
\Gamma$.

Notice that the lattice $F= \s^\bot \cap M$ is the minimal face of
$\Gamma$ and the prime ideal $I_{F}$ of $k[ t^{\Gamma} ]$ defines
the orbit $\orb (\s, \Gamma)$, which is embedded as a closed
subset of ${T}^{\Gamma}$. Let us consider a vector $\nu$ such that
$\nu \in \int(\theta) \cap \int (\s)$. Then we get that  $ \s^\bot
\cap M = \check{\s}  \cap \nu^\bot  \cap M $ is contained in  $
\check{\theta}\cap \nu^\bot \cap M   = \theta^{\bot} \cap M $
hence $\Gamma \setminus (\s^\bot \cap M)$ is contained in $
\Lambda \setminus (\theta^\bot \cap M)$ and therefore $1 \notin
I_F k [t^{\Lambda}]$. Since ${T}^{\Lambda} \subset {T}^{\Gamma}$
is an open immersion  $\orb (\s, \Gamma)$ is contained in
${T}^{\Lambda}$. By  (\ref{orbit}) and Proposition \ref{orbit-aff}
the closure of any orbit contained  $  {T}^{\Gamma} $ contains
$\orb (\s, \Gamma)$ thus  ${T}^{\Gamma} \subset {T}^{\Lambda}$.
\hfill $\ {\Box}$

\begin{remark} \label{open-nor}
The immersion of ${T}^M$-invariant affine open subsets is
compatible with normalization. By Lemma \ref{open-embedding} any
$T^{M}$-invariant affine open set of ${T}^{\Gamma}$ is of the form
$T^{\Gamma}_f$ for $f = t^\g$, $\g \in \Gamma$. Then the following
diagram commutes:
\[
\begin{array}{ccc}
{T}^{\check{\sigma} \cap M } &  \hookrightarrow &  {T}^{\Gamma}
\\
\uparrow &  & \uparrow
\\
{T}^{\check{\sigma} \cap M }_f  & \hookrightarrow &
{T}^{\Gamma}_f,
\end{array}
\]
since $\Gamma + (-\g )\Z_{\geq 0}$ is saturated in $\check{\sigma}
\cap M + (-\g )\Z_{\geq 0}$. The vertical arrows are embeddings as
principal open sets while the horizontal arrows are normalization
maps (see Proposition \ref{normal}).
\end{remark}

\end{section}

\begin{section}{Toric varieties}

Given a finite dimensional lattice $N$, recall that a \textit{fan} is a finite set $\Sigma$  of strictly
convex polyhedral cones of the real vector space $N_\R$ which are rational for the  lattice $N$, such that
if $\s \in \Sigma$ any face $\t$ of $\s$ belongs to $\Sigma$ and
if $\s, \s' \in \Sigma$ the cone $\t= \s \cap \s'$ is in $\Sigma$.
If $j \geq 0$ is an integer the subset of $\Sigma (j)$ of
$j$-dimensional cones of $\Sigma$ is called the
\textit{$j$-skeleton} of the fan. The \textit{support} of the fan
$\Sigma$ is the set $|\Sigma | = \cup_{\s \in \Sigma} \s \subset
N_\R$.

We give first a \textit{combinatorial definition} of toric
varieties.

\begin{definition} \label{def-general-toric}
A toric variety is given by the datum of a triple $(N, \Sigma,
\Gamma)$ consisting of lattice $N$,  a fan $\Sigma$ in $N_{\mathbf
R}$ and a family of finitely generated subsemigroups $\Gamma = \{
\Gamma_\s \subset \check\s \cap M\}_{\s \in \Sigma}$ of a lattice $M =
\mbox{Hom} (N, \Z)$ such that:
\begin{enumerate}
\item[{\rm i.}]  $\Z \Gamma_\s = M$ and $\R_{\geq 0}\Gamma_\s=\check\s$, for $\s  \in \Sigma$.
\item[{\rm ii.}] $\Gamma_\t = \Gamma_\s + M (\t, \Gamma_\s)$, for
a each $\s \in \Sigma$ and  any face $\t$ of $\s$.
\end{enumerate}
 The  corresponding toric variety ${T}_{\Sigma}^{\Gamma}$
 is the union of the affine varieties ${T}^{\Gamma_\s}$
for $\s \in \Sigma$ where for any pair $\s, \s'$ in $\Sigma$ we
glue up ${T}^{\Gamma_\s}$ and ${T}^{\Gamma_{\s'}}$ along their common
open affine variety ${T}^{\Gamma_{\s \cap \s'}}$.
\end{definition}
\begin{remark}
The lattice $N$ in the triple $(N, \Sigma, \Gamma)$ is determined
by $\Gamma$. We recall it by convenience. We omit the reference to
the lattice $N$ in the notation $T^\Gamma_\Sigma$.
\end{remark}

\begin{remark}
This definition is consistent with the case of affine toric
varieties. Let  ${T}^\Gamma$ be an affine toric variety in the
sense of Section \ref{affine}.  If $\s' := \{ \t \mid \t \leq \s
\} $ and  $\Gamma' := \{ \Gamma_\t \mid \t \leq \s \}$, where
$\Gamma_\t$ is the semigroup defined by (\ref{Gamma_tau}) for $\t
\leq \s$ then the conditions i.~and ii.~are satisfied by Lemma
\ref{conditions-aff}. Then ${T}^\Gamma $ is $T^M$-equivariantly
isomorphic to ${T}^{\Gamma'}_{\s'}$.
\end{remark}

\begin{remark}
 A triple $(N, \Sigma, \Gamma)$
determines similarly a toric scheme over $\mathrm{Spec } A$, for
any commutative ring $A$.
\end{remark}

\begin{lemma}\label{sum}
Let $(\Sigma, \Gamma)$ as in Definition \ref{def-general-toric}
define a toric variety ${T}_{\Sigma}^{\Gamma}$. Then we have:
\begin{enumerate}
\item[{\rm i.}]
 If
$\s, \theta \in \Sigma$ and if $\t = \s \cap \theta$ then
$\Gamma_\t = \Gamma_\s + \Gamma_\theta$. \item[{\rm ii.}] The
variety  ${T}_{\Sigma}^{\Gamma}$ is separated.
\end{enumerate}
\end{lemma}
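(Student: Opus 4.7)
For (i), one inclusion is immediate: condition ii of Definition~\ref{def-general-toric} applied to the face inclusions $\t \leq \s$ and $\t \leq \theta$ yields $\Gamma_\s, \Gamma_\theta \subset \Gamma_\t$, hence $\Gamma_\s + \Gamma_\theta \subset \Gamma_\t$. For the converse, my plan is to produce a single element $m \in \Gamma_\s$ lying in the relative interior of $\check\s \cap \t^\bot$ such that simultaneously $-m \in \Gamma_\theta$. Once such an $m$ is available, Lemma~\ref{conditions-aff}(ii) applied to $\Gamma_\s$, combined with the equality $\Gamma_\t = \Gamma_\s + M(\t, \Gamma_\s)$ coming from condition ii of Definition~\ref{def-general-toric}, gives $\Gamma_\t = \Gamma_\s + \Z_{\geq 0}(-m)$; and since $\Z_{\geq 0}(-m) \subset \Gamma_\theta$, any $\gamma \in \Gamma_\t$ then decomposes as $\gamma_\s + k(-m)$ with $\gamma_\s \in \Gamma_\s$ and $k(-m) \in \Gamma_\theta$, exhibiting $\gamma$ in $\Gamma_\s + \Gamma_\theta$.

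The construction of $m$ would proceed in two steps. First, the standard separation lemma for fans, applicable because $\t = \s \cap \theta$, provides $u \in M$ with $u \in \check\s$, $-u \in \check\theta$, and $\s \cap u^\bot = \t = \theta \cap u^\bot$; the last two equalities place $u$ in the relative interior of the face $\check\s \cap \t^\bot$ of $\check\s$ and $-u$ in the relative interior of $\check\theta \cap \t^\bot$. Second, condition i of Definition~\ref{def-general-toric} gives $\R_{\geq 0}\Gamma_\s = \check\s$ and $\R_{\geq 0}\Gamma_\theta = \check\theta$, so some positive integer multiple of $u$ lies in $\Gamma_\s$ and some positive integer multiple of $-u$ lies in $\Gamma_\theta$; passing to a common multiple produces $m := Nu$ with $m \in \Gamma_\s$, $-m \in \Gamma_\theta$, and $m$ still in the relative interior of $\check\s \cap \t^\bot$. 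The main difficulty of the argument is precisely this simultaneous placement: it is where the hypothesis $\t = \s \cap \theta$ (as opposed to $\t$ being merely some common face of $\s$ and $\theta$) enters in an essential way, through the strict vanishing conditions of the separation lemma.

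For (ii), I would invoke the standard criterion: a scheme covered by affine opens is separated if and only if each pairwise intersection is affine and the multiplication map from the tensor product of the coordinate rings onto the coordinate ring of the intersection is surjective. In our setting the cover is $\{{T}^{\Gamma_\s}\}_{\s \in \Sigma}$; by the very definition of the gluing the intersection ${T}^{\Gamma_\s} \cap {T}^{\Gamma_\theta}$ is the affine variety ${T}^{\Gamma_{\s \cap \theta}}$; and the map $k[t^{\Gamma_\s}] \otimes_k k[t^{\Gamma_\theta}] \to k[t^{\Gamma_{\s \cap \theta}}]$ sends $t^\gamma \otimes t^{\gamma'}$ to $t^{\gamma + \gamma'}$, so its surjectivity is exactly the semigroup equality $\Gamma_{\s \cap \theta} = \Gamma_\s + \Gamma_\theta$ proved in (i). Hence (ii) is a direct consequence of (i).
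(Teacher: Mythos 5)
Your proof is correct and follows essentially the same route as the paper's: both arguments obtain the easy inclusion from condition ii of Definition~\ref{def-general-toric}, then use the separation lemma to produce $u$ with $\sigma \cap u^\bot = \tau = \theta \cap u^\bot$, replace $u$ by a positive integer multiple lying in $\Gamma_\sigma \cap (-\Gamma_\theta)$, and conclude via Lemma~\ref{conditions-aff}(ii) that $\Gamma_\tau = \Gamma_\sigma + \Z_{\geq 0}(-u) \subset \Gamma_\sigma + \Gamma_\theta$, with (ii) then following from the surjectivity of $k[t^{\Gamma_\sigma}] \otimes_k k[t^{\Gamma_\theta}] \to k[t^{\Gamma_\tau}]$. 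You are a bit more explicit than the paper in justifying the scaling step (via the saturation statement of Lemma~\ref{reduc}) and in flagging that the exact equality $\tau = \sigma \cap \theta$ is what lets the separation lemma deliver both vanishing conditions simultaneously, but the underlying argument is identical.
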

{\textit Proof.} The intersection $\t=\s \cap \theta$ is a face of
both $\s$ and $\theta$. By Lemma \ref{conditions-aff} we have that
$M(\t, \Gamma_\t) = M(\t, \Gamma_\s) = M (\t, \Gamma_\theta)$. By
axiom i.~in the Definition \ref{def-general-toric} we get
$\Gamma_\theta, \Gamma_\s \subset \Gamma_\t$ and $\Gamma_\theta +
\Gamma_\s \subset \Gamma_\t$. Conversely, by the separation lemma
for polyhedral cones, for any  $u \in \int (\check{\s} \cap
(-\check{\theta}))$ we have that $\t = \s \cap u^\bot = \theta
\cap u^\bot$.  Notice that we can assume that $ u \in \Gamma_\s
\cap (-\Gamma_\theta) \cap \int (\check{\s} \cap
(-\check{\theta})) \ne \emptyset$. Then by Lemma
\ref{conditions-aff} we obtain $\Gamma_\t = \Gamma_\s + \Z_{\geq
0}  (-u)$. Hence $\Gamma_\t$ is contained in $\Gamma_\s +
\Gamma_\theta$ since $-u \in \Gamma_\theta$.

 The homomorphism $k
[t^{\Gamma_\theta}] \otimes_k k [t^{\Gamma_\s}] \rightarrow k
[t^{\Gamma_\t}]$ which sends $t^{\gamma} \otimes t^{\gamma'}
\mapsto t^{\gamma+ \gamma'}$ is surjective since $\Gamma_\s +
\Gamma_\theta = \Gamma_\t$. In geometric terms this implies that
the diagonal map ${T}^{\Gamma_\t} \rightarrow {T}^{\Gamma_\theta}
\times {T}^{\Gamma_\s}$ is a closed embedding for any $\theta, \s
\in \Sigma$ with $\t = \theta \cap \t$, hence the variety
${T}^{\Gamma}_\Sigma$ is separated (see Chapter 2 of \cite{Ha}).
\hfill $\ {\Box}$

\begin{remark}\label{norma} The morphisms
corresponding to the inclusions $k[t^{\Gamma_\sigma}]\to
k[t^{\Gamma_\sigma+\Gamma_{\sigma'}}]$ are open embeddings
compatible with the normalization maps. The normalization of the
toric variety ${T}_{\Sigma}^{\Gamma}$ is the toric variety
${T}_{\Sigma}$ corresponding to the fan $\Sigma$ and the
normalization map is obtained by gluing-up normalizations
${T}_{\s}:= T^{\check{\s} \cap M} $ of the charts
${T}^{\Gamma_\s}$, for $\Gamma_\s \in \Gamma$ and $\s \in \Sigma$.
\end{remark}

\begin{lemma} Let $\l_v$ be a one-parameter subgroup of the
torus $T^M$ for some $v \in N$. Then $\lim_{z \rightarrow 0} \l_v
(z)$ exists in the toric variety ${T}^{\Gamma}_\Sigma$ if and only
if $v$ belongs to $\vert\Sigma\vert \cap N $.
\end{lemma}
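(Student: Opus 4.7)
The plan is to recast the existence of $\lim_{z \to 0}\l_v(z)$ as the extension of the morphism $\l_v\colon \mathbf{G}_m \to T^{\G}_{\Sigma}$ to a morphism $\tilde\l_v\colon \A^1 \to T^{\G}_{\Sigma}$, and then reduce the problem, chart by chart, to a cone-containment condition between $v$ and the cones of $\Sigma$.

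First I would fix $\s \in \Sigma$ and analyze when $\l_v$, viewed as a morphism $\mathbf{G}_m \to T^M \subset T^{\G_\s}$, extends to a morphism $\A^1 \to T^{\G_\s}$. Dually, $\l_v$ corresponds to the $k$-algebra map $k[t^{\G_\s}] \to k[z^{\pm 1}]$ sending $t^m \mapsto z^{\langle v, m\rangle}$; this homomorphism factors through $k[z]$---equivalently, $\l_v$ extends to $\A^1 \to T^{\G_\s}$---if and only if $\langle v, m\rangle \geq 0$ for every $m \in \G_\s$. Since $\R_{\geq 0}\G_\s = \check{\s}$ by axiom i of Definition \ref{def-general-toric} and $\s$ is by construction the dual cone of $\check{\s}$, this positivity condition is exactly $v \in \s$. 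Consequently, if $v \in |\Sigma| \cap N$ and $\s \in \Sigma$ is any cone containing $v$, then $\l_v$ extends to $\A^1 \to T^{\G_\s} \subset T^{\G}_\Sigma$, establishing the easy direction.

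Conversely, suppose $\l_v$ extends to $\tilde\l_v\colon \A^1 \to T^{\G}_\Sigma$. Since $T^{\G}_\Sigma = \bigcup_{\s \in \Sigma} T^{\G_\s}$ is an open cover, the image $\tilde\l_v(0)$ lies in some affine chart $T^{\G_\s}$. The preimage $\tilde\l_v^{-1}(T^{\G_\s})$ is then an open neighborhood of $0 \in \A^1$, and because $\l_v(\mathbf{G}_m) \subset T^M$ is already contained in every chart $T^{\G_{\s'}}$ (as the common principal open set of the charts), one concludes that $\tilde\l_v(\A^1) \subset T^{\G_\s}$. Applying the algebraic criterion of the previous paragraph to this $\s$ now forces $v \in \s \subset |\Sigma|$, which is the required conclusion.

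The main (and very modest) obstacle is the separation-theoretic verification that a morphism $\A^1 \to T^{\G}_\Sigma$ whose special value lies in some affine chart must factor through that chart; once that is settled, the whole proof reduces to the elementary duality between $\check{\s}$ and $\s$. As a by-product one can identify the limit point explicitly as the semigroup homomorphism $\G_\s \to k$ sending $m \mapsto 1$ if $\langle v, m\rangle = 0$ and $m \mapsto 0$ otherwise; this is the distinguished point of the orbit $\orb(\t, \G_\s)$ corresponding, via Proposition \ref{orbit-aff}, to the smallest face $\t$ of $\s$ containing $v$.
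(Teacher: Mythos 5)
Your proof is correct, but it takes a genuinely different route from the paper's. The paper proves the lemma by reduction to the normal case: it observes that the one-parameter subgroup lifts canonically to the normalization $T_\Sigma$, cites the known result for normal toric varieties (Oda, Prop.~1.6), and then uses the valuative criterion of properness applied to the finite (hence proper) normalization map $n\colon T_\Sigma \to T^\Gamma_\Sigma$ to show that the limit exists downstairs if and only if it exists upstairs. Your argument, by contrast, is direct and self-contained: you characterize, chart by chart, when the ring map $k[t^{\Gamma_\sigma}]\to k[z^{\pm1}]$ factors through $k[z]$, identify the resulting positivity condition with $v\in\sigma$ via the duality $\R_{\geq 0}\Gamma_\sigma=\check\sigma$, and for the converse you use the elementary topological observation that an extension $\tilde\lambda_v\colon\A^1\to T^\Gamma_\Sigma$ must factor through whichever chart $T^{\Gamma_\sigma}$ contains $\tilde\lambda_v(0)$, since the preimage of that chart is open, contains $0$, and contains all of $\mathbf{G}_m$ (because $T^M\subset T^{\Gamma_\sigma}$). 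In effect, you reprove Oda's normal-case result in the general setting rather than quoting it and transporting it across the normalization. Your approach buys self-containedness, avoids invoking properness of normalization and the valuative criterion, and produces as a by-product the explicit limit point and the face of $\sigma$ on whose distinguished orbit it lies; the paper's approach is shorter given the citation and makes the parallel with the normal theory transparent. One small remark: the step you flag as requiring "separation" does not actually use separatedness of $T^\Gamma_\Sigma$ --- it only needs that the charts $T^{\Gamma_\sigma}$ are open (which is built into Definition~\ref{def-general-toric}) together with $\A^1=\{0\}\sqcup\mathbf{G}_m$; so that worry can be dropped.
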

{\em Proof.} The statement is well-known in the normal case (see
Proposition 1.6 \cite{Oda}).  The normalization map $n \colon
{T}_\Sigma \rightarrow {T}^\Gamma_\Sigma$ is an isomorphism over
the torus $T^{M}$. If $\l_v \colon k^* \rightarrow T^{M} \subset
{T}^\Gamma_\Sigma$ is a one-parameter subgroup defined by $v \in
N$ it lifts to the normalization, i.e., there is a morphism
$\bar{\l}_v \colon k^* \rightarrow T^{M} \subset {T}_\Sigma$ in
such a way that $n
 \circ \bar{\l}_v = \l_v$.
Since the normalization is a proper morphism we get by the
valuative criterion of properness that $\lim_{z \rightarrow 0}
\l_v (z)$ exists in the toric variety ${T}^{\Gamma}_\Sigma$ if and
only if $\lim_{z \rightarrow 0} \bar{\l}_v (z)$ exists in
${T}_\Sigma$. \hfill $\ {\Box}$

\begin{lemma} \label{orbit-gen}
Let $T^\Gamma_\Sigma$ be a toric variety. Then the map
\[
\t \mapsto \orb (\t, \Gamma_{\t}):= i_\t( {T}^{M(\t, \Gamma_\t)})
\] defines a bijection
between the faces of $\Sigma$ and the orbits of the torus action
on ${T}^\Gamma_\Sigma$.
\end{lemma}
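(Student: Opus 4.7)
The plan is to globalize Proposition \ref{orbit-aff} via the affine cover $\{T^{\Gamma_\s}\}_{\s \in \Sigma}$ of $T^\Gamma_\Sigma$; the only real content is to verify that the orbit labels attached in different charts agree on overlaps.

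First, for each $\s \in \Sigma$ Proposition \ref{orbit-aff} provides the disjoint decomposition $T^{\Gamma_\s}=\bigsqcup_{\t\leq \s}\orb(\t,\Gamma_\s)$. I would then show that, whenever $\t\leq \s$, the subset $\orb(\t,\Gamma_\s)\subset T^{\Gamma_\s}$ is already contained in the open affine subvariety $T^{\Gamma_\t}\subset T^{\Gamma_\s}$ (Lemma \ref{open-embedding}) and coincides there with $\orb(\t,\Gamma_\t)$. The decisive ingredient is Lemma \ref{conditions-aff}(iii), which gives $M(\t,\Gamma_\s)=M(\t,\Gamma_\t)$, so that the same torus $T^{M(\t,\Gamma_\t)}$ parametrizes the orbit in either chart. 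The explicit point-by-point description of $i_\t$ in Remark \ref{orb-clos}, namely extension by zero outside $\t^\bot$, is intrinsic to $\t$ and therefore commutes with the open inclusion $T^{\Gamma_\t}\subset T^{\Gamma_\s}$.

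Granted this compatibility, the locally defined pieces $\orb(\t,\Gamma_\s)$ for $\s\geq \t$ glue to a single well-defined $T^M$-invariant locally closed subset $\orb(\t,\Gamma_\t)$ of $T^\Gamma_\Sigma$, and combining the affine decompositions gives
\[
T^\Gamma_\Sigma=\bigcup_{\s\in \Sigma}T^{\Gamma_\s}=\bigcup_{\s\in \Sigma}\bigsqcup_{\t\leq \s}\orb(\t,\Gamma_\t)=\bigsqcup_{\t\in \Sigma}\orb(\t,\Gamma_\t).
\]
Surjectivity onto the set of $T^M$-orbits is immediate: any orbit meets some affine chart $T^{\Gamma_\s}$ and, being $T^M$-invariant, is entirely contained in it, hence equals one of the $\orb(\t,\Gamma_\s)$. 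Injectivity follows because $\orb(\t,\Gamma_\t)$ is the unique closed orbit in the chart $T^{\Gamma_\t}$, so it determines $\t$ uniquely (as the minimal $\s\in\Sigma$ such that the orbit is contained in $T^{\Gamma_\s}$).

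The only genuine point to check is the second step above: that the orbits labelled by the same $\t$ in two different charts $T^{\Gamma_\s}$ and $T^{\Gamma_{\s'}}$ (with $\t\leq \s$ and $\t\leq \s'$) are literally the same subset of $T^\Gamma_\Sigma$. This reduces, on the common overlap $T^{\Gamma_{\s\cap \s'}}$ (which by Lemma \ref{sum}(i) corresponds to $\Gamma_\s+\Gamma_{\s'}=\Gamma_{\s\cap \s'}$), to a single identification inside one affine chart, and is controlled entirely by the lattice equality $M(\t,\Gamma_\s)=M(\t,\Gamma_\t)=M(\t,\Gamma_{\s'})$ from Lemma \ref{conditions-aff}(iii). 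This is the main, though routine, obstacle; once it is settled everything else is a direct transport of the affine picture through the gluing data of Definition \ref{def-general-toric}.
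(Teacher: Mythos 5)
Your proposal is correct, and it takes essentially the same route as the paper, which simply says the lemma "is consequence of the definitions and Lemma (Proposition) \ref{orbit-aff}"; your write-up supplies exactly the gluing details that the paper leaves implicit. In particular, the key consistency check you isolate---that $M(\t,\Gamma_\s)=M(\t,\Gamma_\t)$ from Lemma \ref{conditions-aff}(iii) makes the chart-by-chart orbit labels agree on overlaps---is precisely what is needed to transport the affine decomposition of Proposition \ref{orbit-aff} through the gluing data of Definition \ref{def-general-toric}.
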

\textit{Proof.} This is consequence of the definitions and Lemma
\ref{orbit-aff}. \hfill $\ {\Box}$

In order to illustrate the combinatorial definition of a toric
variety we describe the orbit closures as toric varieties.

\begin{notation}   \label{ind}
If $\t \in \Sigma$ we denote by $N_\t$ the sublattice of $N$
spanned by $\t \cap N$ and by $N(\t)$ the quotient $N /
N_\t$. The lattice $N (\t)$ is the dual lattice of $M(\t) = M \cap
\t^\bot$. Since $M(\t, \Gamma_\t)$ is a sublattice of finite index
$i (\t, \Gamma_\t)$ of $M(\t)$ then the dual lattice $N(\t,
\Gamma_\t)$ of $M(\t, \Gamma_\t)$ contains $N(\t)$ as a sublattice
of  finite index equal to  $i (\t, \Gamma_\t)$.
\end{notation}

If $\s \in \Sigma$ and $\t \leq \s$ the image $\s(\t)$ of $\s$ in
$N(\t)_\R = N_\R / (N_\t)_\R$ is a polyhedral cone, rational for
the lattice $N(\t, \Gamma_\t)$. The set $\Sigma(\t) := \{ \s(\t)
\mid \s \in \Sigma, \t \leq \s \}$ is a fan in $N(\t)_\R$. If
$\s(\t) \in \Sigma (\t)$ we set $\Gamma_{\s(\t)} := \Gamma_\s \cap
\t^\bot$.  The set  $\s(\t) \subset N(\t)_\R$ is
 the dual cone   of the cone spanned by $\Gamma_\s
\cap \t^\bot$ in $M(\t)_\R$. Let us denote by $\Gamma(\t)$ the set
$\{ \Gamma_{\s(\t)} \mid \s(\t) \in \Sigma(\t) \}$.

\begin{lemma} \label{orbit-closure}
Let $T^\Gamma_\Sigma$ be a toric variety. If $\t \in \Sigma$ the
triple  $( N(\t, \Gamma_\t), \Sigma(\t), \Gamma(\t) )$ defines a
toric variety $T_{\Sigma (\t)}^{\Gamma(\t)}$. We have a closed
embedding $i_\t : T_{\Sigma (\t)}^{\Gamma(\t)} \to
T^{\Gamma}_{\Sigma}$. The map
\[
\t \mapsto i_{\t}({T}_{\Sigma(\t)}^{\Gamma(\t)})
\] defines a bijection
between the faces of $\Sigma$ and orbit closures of the action of
${T}^M$ on ${T}^\Gamma_\Sigma$.
\end{lemma}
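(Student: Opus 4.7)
The plan is to verify the conditions of Definition \ref{def-general-toric} for the triple $(N(\t,\Gamma_\t),\Sigma(\t),\Gamma(\t))$, then build the closed embedding by gluing the affine pieces from Remark \ref{orb-clos}, and finally match the image with the orbit closure described by Lemma \ref{orbit-gen}.

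First I would check that the triple is admissible. Condition i.\ is essentially built into the notation: $\Gamma_{\s(\t)} = \Gamma_\s \cap \t^\bot$ generates the lattice $M(\t,\Gamma_\s)$, and since any $\s \in \Sigma$ containing $\t$ satisfies $M(\t,\Gamma_\s)=M(\t,\Gamma_\t)$ by Lemma \ref{conditions-aff}.iii, this is the ambient lattice of the new triple; also, as noted just before the statement, $\R_{\geq 0}(\Gamma_\s\cap\t^\bot)$ coincides with the dual of $\s(\t)$ in $M(\t)_\R$. For condition ii.\ I would take $\theta \leq \s$ with $\t\leq\theta$ and intersect the equality $\Gamma_\theta=\Gamma_\s+M(\theta,\Gamma_\s)$ with $\t^\bot$. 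Using $M(\theta,\Gamma_\s)\subset\theta^\bot\subset\t^\bot$ this yields $\Gamma_{\theta(\t)}=\Gamma_{\s(\t)}+M(\theta(\t),\Gamma_{\s(\t)})$, which is exactly the gluing axiom for the fan $\Sigma(\t)$.

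Next I would construct the morphism $i_\t$ by gluing. For each $\s\in\Sigma$ with $\t\leq\s$, Remark \ref{orb-clos} provides the affine closed embedding $i_\t\colon T^{\Gamma_\s\cap\t^\bot}\hookrightarrow T^{\Gamma_\s}$ induced by the surjection $k[t^{\Gamma_\s}]\twoheadrightarrow k[t^{\Gamma_\s\cap\t^\bot}]$ killing the prime ideal $I_{\Gamma_\s\cap\t^\bot}$. I would check that these embeddings are compatible with the open immersions gluing the charts of $T^{\Gamma(\t)}_{\Sigma(\t)}$ and of $T^\Gamma_\Sigma$: for $\s,\s'\in\Sigma$ both containing $\t$, with $\theta=\s\cap\s'$, the two resulting embeddings agree on $T^{\Gamma_\theta\cap\t^\bot}\hookrightarrow T^{\Gamma_\theta}$ because both arise from the same projection $k[t^{\Gamma_\theta}]\twoheadrightarrow k[t^{\Gamma_\theta\cap\t^\bot}]$. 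Gluing then yields a well-defined morphism $i_\t\colon T^{\Gamma(\t)}_{\Sigma(\t)}\to T^\Gamma_\Sigma$, which is closed because closedness is local on the target.

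Finally I would identify the image with the orbit closure of $\orb(\t,\Gamma_\t)$. On the affine chart $T^{\Gamma_\s}$, the image of $i_\t$ is cut out by $I_{\Gamma_\s\cap\t^\bot}$, and by Proposition \ref{orbit-aff} this is precisely the closure in $T^{\Gamma_\s}$ of the orbit $\orb(\t,\Gamma_\s)=\orb(\t,\Gamma_\t)$. Applying Lemma \ref{orbit-gen} to the toric variety $T^{\Gamma(\t)}_{\Sigma(\t)}$ and using the correspondence $\s\leftrightarrow\s(\t)$ between cones of $\Sigma$ containing $\t$ and cones of $\Sigma(\t)$, the image $i_\t(T^{\Gamma(\t)}_{\Sigma(\t)})$ is the disjoint union of the orbits $\orb(\s,\Gamma_\s)$ for $\s\in\Sigma$ with $\t\leq\s$, which is the closure of $\orb(\t,\Gamma_\t)$. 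The bijection $\t\mapsto i_\t(T^{\Gamma(\t)}_{\Sigma(\t)})$ then follows from the orbit bijection of Lemma \ref{orbit-gen}.

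The only delicate step I expect is the affine identification $\R_{\geq 0}(\Gamma_\s\cap\t^\bot)=\check\s\cap\t^\bot$, which is needed for both the axiom check and the dual cone description; the rest is bookkeeping against the lemmas already established in the excerpt.
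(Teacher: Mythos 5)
Your proposal is correct and follows essentially the same route as the paper's proof: verify the two axioms of Definition \ref{def-general-toric} for the triple $(N(\t,\Gamma_\t),\Sigma(\t),\Gamma(\t))$ using Lemma \ref{conditions-aff} and the identification $M(\theta,\Gamma_\s)=M(\theta(\t),\Gamma_{\s(\t)})$, glue the affine closed embeddings of Remark \ref{orb-clos}, and match the image with the orbit closure via Proposition \ref{orbit-aff} and Lemma \ref{orbit-gen}. The "delicate step" you flag, $\R_{\geq 0}(\Gamma_\s\cap\t^\bot)=\check\s\cap\t^\bot$, is the duality fact the paper asserts in the setup just before the lemma and is a direct consequence of $\check\s\cap\t^\bot$ being a face of $\check\s=\R_{\geq 0}\Gamma_\s$.
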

\textit{Proof.} If $\t$ is not a face of $\s$, for $\s \in \Sigma$
then $\orb (\t, \Gamma_{\t})$ is does not intersect the affine
invariant open set ${T}^{\Gamma_\s}$; if $\t \leq \s$, for $\s \in
\Sigma$  the closure of the orbit $\orb (\t, \Gamma_{\t})$ in the
affine open set ${T}^{\Gamma_\s}$ is equal to ${T}^{\Gamma_\s \cap
\t^\bot}$ (see  Lemma \ref{orbit-aff}).

If $\t \leq \theta \leq \s$ then $\theta(\t) \leq \s(\t)$ and
$\theta^\bot \subset \t^\bot$ hence $M(\theta, \Gamma_\s) = M (
\theta(\t), \Gamma_\s \cap \t^\bot)$ is the sublattice spanned by
$\Gamma_\s \cap \theta^\bot$.

If $\t \leq \s, \s' $ and if $\theta = \s \cap \s'$ then we deduce
from condition ii.~in Definition \ref{def-general-toric} that:
\[
\Gamma_\theta \cap \t^\bot = \Gamma_\s \cap \t^\bot + M(\theta
(\t) , \Gamma_{\s (\t)} ) = \Gamma_{\s'} \cap \t^\bot +
M(\theta(\t), \Gamma_{\s'(\t)}).
\]
We obtain that  the triple $( N(\t, \Gamma_\t), \Sigma(\t),
\Gamma(\t) )$ satisfies the axioms in Definition
\ref{def-general-toric} with respect to the torus $T^{M(\t,
\Gamma_\t)}$.

We have also described an embedding ${T}_{\Sigma(\t)}^{\Gamma(\t)}
\hookrightarrow {T}^\Gamma_\Sigma$ in such a way that the
intersection of this variety with any affine chart containing
$\orb(\t, \Gamma)$ is the closure of the orbit   $\orb(\t,
\Gamma)$ in the chart. The conclusion follows from Lemma
\ref{orbit-gen}.
 \hfill $\ {\Box}$
\begin{remark} The non-singular locus of the toric variety
${T}^\Gamma_\Sigma$ is the union of the orbits $\orb(\t, \Gamma)$
corresponding to \textit{regular} cones $\t \in \Sigma$ such their
index $i (\t, \Gamma_\t)$ is equal to $1$.

\end{remark}

\end{section}

\begin{section} {Blowing ups}\label{blow}

The theory of normal toric varieties deals with normalized
equivariant blowing ups, i.e., blowing ups of equivariant ideals
followed by normalization. In this section we build blowing ups of
equivariant ideals in toric varieties.

Let $\sigma$ be a strictly convex rational cone in $N_{\mathbf R}$
and $\Gamma$ a subsemigroup of finite type of the lattice $M$ such
that $\Z \Gamma = M$ and the saturation of $\Gamma$ in $M$ is
equal to $\check\sigma\cap M$. For simplicity we assume that the
cone $\s$ is of dimension $d$ hence $\check \s$ is strictly
convex.

Let us consider a graded ideal ${\mathcal I}$  in $A[t^\Gamma]$,
which is necessarily generated by monomials $t^{m_1},\ldots
,t^{m_k}$. We build the corresponding \textit{Newton polyhedron}
$\Newton_\s ({\mathcal I})$, by definition the convex hull in
$M_{\mathbf R}$ of the $m_i+\check\sigma$, which is also the
convex hull of the set  $|{\mathcal I}|$ of exponents of monomials
belonging to the ideal ${\mathcal I}$ of $A[t^{\Gamma}]$. It is
quite convenient to denote with the same letter $\mathcal{I}$ the
set $\{ m_1, \dots m_k \}$.

The set  $\mathcal{I}$ determines the \textit{order function}:
\begin{equation} \label{ord}
\ord_{\mathcal{I}} \colon \s \rightarrow \R, \quad  \nu \mapsto
\min_{m \in \mathcal{I}} \langle \nu, m \rangle.
\end{equation}
The order function $\ord_{\mathcal{I}}$ coincides with the support
function \emph{\`a la} Minkowski of the polyhedron ${\mathcal N}=\Newton_\s (\mathcal{I})$ defined as the function $H\colon N_{\mathbf R}\to {\mathbf R}$ given by $H(\nu)={\rm min}_{m\in {\mathcal N}}\langle\nu,m\rangle$. It is a
gauge ($\ord_{\mathcal{I}}(\lambda u)=\lambda
\ord_{\mathcal{I}}(u)\ \ \hbox{\rm for }\lambda>0$) which is
piecewise linear. The maximal cones of linearity of the function
$\ord_{\mathcal{I}}$ form the $d$-skeleton  of the fan
$\Sigma(\mathcal{I})$ subdividing $\s$. Each such cone $\s_i $ in
the $d$-skeleton of  $\Sigma(\mathcal{I}) $ is the convex dual of
the convex rational cone generated by the vectors $(m -m_i)_{m \in
\Newton_\s (\mathcal{I})}$, where $m_i$ is a vertex of $\Newton_\s
(\mathcal{I})$. The correspondence $m_i \mapsto \s_i$
 is a bijection  between the set of vertices $\{m_1, \dots, m_s
\} \subset \mathcal{I}$ of the polyhedron $\Newton_\s
(\mathcal{I})$ and the $d$-skeleton of $\Sigma(\mathcal{I})$, such
that
\[
 m_i \mapsto \s_i  \mbox{ if and only if  } \ord_{\mathcal{I}} (\nu) =
\langle \nu, m_i \rangle \mbox { for all } \nu \in \s_i.
\]

 Note that $\Gamma\subset \check\sigma \cap M\subset
\check \sigma_{i} \cap M$. In each of the cones $\check\sigma_{i}$
we consider  the semigroup
\begin{equation} \label{Gamma}
\Gamma_i= \Gamma+\langle m_1-m_i,\ldots,
m_{i-1}-m_i,m_{i+1}-m_i,\ldots, m_k-m_i\rangle \subset
\check\sigma_i \cap M. \end{equation} By Lemma \ref{reduc}, the
saturation in $M$ of this semigroup is equal to $\check\sigma_i
\cap M$. We denote by $\Gamma(\mathcal{I})$ the set consisting of
the semigroups $\Gamma_i$,  together with $\Gamma_{i, \tau}$
(defined by equation (\ref{Gamma_tau})) for $\tau \leq \s_i$,
$i=1, \dots, s$.

\begin{proposition} \label{blowing}
The triple $(N, \Sigma  (\mathcal{I}), \Gamma(\mathcal{I}))$
defines a toric scheme $B$ over $\hbox{\rm Spec }A$. The
inclusions $\Gamma \subset \Gamma_i$, $i=1, \dots,s $, determine a
map of schemes
\[ \p \colon B  \rightarrow \hbox{\rm Spec }A[t^{\Gamma}]
\]
over $ \hbox{\rm Spec }A$, which is the blowing up of the ideal
$\mathcal{I}$.
\end{proposition}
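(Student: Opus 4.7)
I would proceed in three steps: verify the combinatorial axioms for the triple, construct the morphism $\p$, then identify it with the blowing up.

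\emph{Axioms of Definition \ref{def-general-toric}.} For axiom (i), $\Z \Gamma_i = M$ is immediate from $\Gamma \subseteq \Gamma_i$, and for $\nu \in \s_i$ the defining property $\ord_{\mathcal I}(\nu) = \langle \nu, m_i\rangle$ gives $\langle \nu, m_j - m_i\rangle \geq 0$ for every $j$, so $\R_{\geq 0}\Gamma_i \subseteq \check\s_i$; equality then follows from Lemma \ref{reduc} since the saturation of $\Gamma_i$ in $M$ is $\check\s_i \cap M$. For axiom (ii), the relation $\Gamma_{i,\t} = \Gamma_i + M(\t, \Gamma_i)$ is built into the defining equation (\ref{Gamma_tau}); the one substantial check is the compatibility $\Gamma_{i,\t} = \Gamma_{j,\t}$ when $\t = \s_i \cap \s_j$. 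Here $\ord_{\mathcal I}$ coincides with both $\langle \cdot, m_i\rangle$ and $\langle \cdot, m_j\rangle$ on $\t$, so $m_j - m_i \in \t^\bot$; combined with $m_j - m_i \in \Gamma_i$ this puts $m_j - m_i$ in $M(\t, \Gamma_i) \cap M(\t, \Gamma_j)$, which lets one pass freely between the two descriptions.

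\emph{Construction of $\p$.} Each inclusion $\Gamma \hookrightarrow \Gamma_i$ yields an affine morphism $\mathrm{Spec}\, A[t^{\Gamma_i}] \to \mathrm{Spec}\, A[t^\Gamma]$. On the common open affine of two such charts, associated to $\t = \s_i \cap \s_j$, both restrictions factor through the inclusion $\Gamma \hookrightarrow \Gamma_i + \Gamma_j = \Gamma_{i,\t}$ supplied by Lemma \ref{sum}; hence the maps glue into the required $\p : B \to \mathrm{Spec}\, A[t^\Gamma]$.

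\emph{Identification with the blowing up, and the main obstacle.} I would invoke the classical description of $\mathrm{Bl}_{\mathcal I}(\mathrm{Spec}\, A[t^\Gamma]) = \mathrm{Proj}\bigoplus_{n\geq 0}\mathcal I^n$ by the affine charts indexed by the generators $t^{m_i}$: the chart distinguished by $t^{m_i}$ is the degree-zero piece of the localization of the Rees algebra at $t^{m_i}$, which inside the Laurent algebra $A[t^M]$ is the subring $A[t^\Gamma][t^{m_j - m_i} : j]$, and by the very formula (\ref{Gamma}) this subring equals $A[t^{\Gamma_i}]$. Since the $t^{m_i}$ generate $\mathcal I$, these charts cover $\mathrm{Bl}_{\mathcal I}$, and the Rees gluing coincides with the gluing built into $B$ from the fan $\Sigma(\mathcal I)$ because both amount to taking sums of the semigroups $\Gamma_i$ inside $A[t^M]$; hence $\p$ is identified with the structure morphism of the blowing up. The step I expect to require the most care, when $A$ is an arbitrary commutative ring, is verifying that the Rees algebra $\bigoplus_n \mathcal I^n T^n$ embeds in $A[t^M][T]$ so that localization at $t^{m_i}$ genuinely produces the monomial subring $A[t^{\Gamma_i}]$ without hidden relations; I would handle this by presenting the Rees algebra as a semigroup algebra for a suitable semigroup in $M \oplus \Z_{\geq 0}$ and using that $t^{m_i}$ is a unit in $A[t^M]$.
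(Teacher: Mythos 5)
Your proof follows the same route as the paper: verify the gluing compatibility at pairs of vertices of $\Newton_\s(\mathcal I)$ using that $m_j-m_i\in\t^\perp$ when $\t=\s_i\cap\s_j$, glue the affine maps given by $\Gamma\subset\Gamma_i$, and identify the charts $\mathrm{Spec}\,A[t^{\Gamma_i}]$ with the standard affine charts of $\mathrm{Proj}\bigoplus_n\mathcal I^n$, with the non-vertex charts being redundant. One small remark: your justification of $\R_{\geq 0}\Gamma_i=\check\s_i$ via Lemma \ref{reduc} is circular as phrased (Lemma \ref{reduc} gives the saturation as $(\R_{\geq 0}\Gamma_i)\cap M$, which you then assert is $\check\s_i\cap M$); the reverse inclusion $\check\s_i\subseteq\R_{\geq 0}\Gamma_i$ should instead be observed directly from $\check\s_i$ being generated by the $m_j-m_i$ together with $\check\s$ — otherwise your explicit treatment of the Rees algebra over a general base ring $A$ is a welcome amplification of a point the paper leaves implicit.
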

\textit{Proof.} We prove  first that  the triple $(N, \Sigma
(\mathcal{I}), \Gamma(\mathcal{I}))$ satisfies the compatibility
conditions stated in Definition \ref{def-general-toric}. By  Lemma
\ref{conditions-aff} it is enough to check them for the affine
open sets corresponding to two vertices, say $m_1$ and $m_2$, of
$\Newton_\s( \mathcal{I})$. Then, if $\t = \s_1 \cap \s_2$ the
condition we have to prove is that $\Gamma_{1, \t} = \Gamma_{2,
\t}$.

Notice that the vector $m := m_2 - m_1 \in \Gamma_1$ belongs to
the interior of $\check\s_1 \cap \t^\perp$. By Lemma
\ref{conditions-aff}  and the definitions we get $\Gamma_{1, \t} =
\Gamma_1 + \Z_{\geq 0} (- m)$ and similarly $\Gamma_{2, \t} =
\Gamma_2 + \Z_{\geq 0} m $.  Then the assertion follows since
$\Gamma_{1, \t}$, which  is equal to
\[
  \Gamma +   \Z (m_2 - m_1) + \sum_{j=2, \dots, k}
\Z_{\geq 0} (m_j - m_1) = \Gamma +   \Z (m_1 - m_2) + \sum_{j=2,
\dots, k}  \Z_{\geq 0} (m_j - m_2),
\]
is the same semigroup as $\Gamma_{2, \t}$.

It follows that the scheme $B$ is covered by the affine sets
$\hbox{\rm Spec }A[t^{\Gamma_i}]$ for $i =1, \dots, s$.
 Since each
$\Gamma_i$ contains $\Gamma$, there is a natural map
$\pi \colon   \hbox{\rm Spec }A[t^{\Gamma_i}]    \to \hbox{\rm Spec }A[t^\Gamma]$. The sheaf of
ideals on ${B}$ determined by the compositions with $\pi$ of the
generators of ${\mathcal I}$ is generated by $t^{m_i}\circ \pi$ in
the chart $\hbox{\rm Spec}A[t^{\Gamma_i}]$.

It is not difficult to prove that any  semigroup  $\Gamma_{i}$ defined
by (\ref{Gamma}), for $i
> s$, that is when $m_i$ is not a vertex of $\Newton_\s
(\mathcal{I})$, is of the form $\Gamma_{j, \t}$ for some $1 \leq j
\leq s$ and $\t \leq \s_j$. This means that the corresponding
affine chart $\hbox{\rm Spec }A[t^{\Gamma_i}]$ of the blowing up
of $\mathcal{I}$ is in fact an affine open subset of $\hbox{\rm
Spec }A[t^{\Gamma_j}]$, where $m_j$ is a vertex of $\Newton_\s
(\mathcal{I})$. \hfill $\ {\Box}$

\begin{remark}
With the above notations let us consider the \textit{Rees algebra} of $\mathcal{I}$, defined as 
$R[\mathcal{I}] = \bigoplus_{l \geq 0} \mathcal{I}^l s^l $. Since  each power $\mathcal{I}^l$ is a monomial ideal,  the
term $\mathcal{I}^l s^l$ is of the form  $\mathcal{I}^l s^l = \oplus_{\gamma \in |\mathcal{I}^l |} t^\gamma s^l $.
Consider also the semigroup $\Gamma_{\mathcal{I}}$ of $M \times \Z$ generated by 
$(\Gamma \times \{0 \} )  \cup  (|\mathcal{I} | \times \{ 1 \})$. 
By using the map of semigroups  $\Gamma \to \Gamma_{\mathcal{I}}$, 
defined by $\gamma \mapsto (\gamma, 0)$, the semigroup algebra 
$A [t^{\Gamma_{\mathcal{I}}}]$ has the structure of  $A[t^\Gamma]$-algebra. 
There is a unique isomorphism of semigroups algebras $R[\mathcal{I}] \to A[t^{\Gamma_{\mathcal{I}}}]$ over $A$, such that 
$t^{\gamma} s^l \mapsto t^{(\gamma, l)}$.  This is also an isomorphism of $A[t^\Gamma]$- graded algebras, when the grading of 
a monomial $t^{(\gamma, l)}$ is defined to be equal to  $l$.
 The canonical map $\textrm{Proj} (R[\mathcal{I}] ) \to \textrm{Spec} A[t^\Gamma]$ is 
 the blowing up of the ideal $\mathcal{I}$. We have given in Proposition \ref{blowing}
 the combinatorial description of this map as a toric morphism as defined in Section \ref{tm}. See  also Section 11.3 \cite{CLS}.
\end{remark}

\begin{corollary} The blowing-up of an equivariant sheaf of ideals on a toric variety
$T_\Sigma^\Gamma$ is a toric variety. Its description above each
equivariant open affine chart of $T_\Sigma^\Gamma$ is given by
Proposition \ref{blowing}.
\end{corollary}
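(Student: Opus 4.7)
The plan is to reduce the global statement to the affine case already handled by Proposition \ref{blowing} and to glue using the axioms of Definition \ref{def-general-toric}. First I would observe that an equivariant sheaf of ideals $\mathcal{I}$ on $T_\Sigma^\Gamma$ restricts on each equivariant affine chart $T^{\Gamma_\s}$ to a $T^M$-stable ideal $\mathcal{I}_\s \subset A[t^{\Gamma_\s}]$; since it is stable under the torus action by multiplication of characters it is generated by monomials $t^{m_1^\s},\dots,t^{m_{k_\s}^\s}$, so that Proposition \ref{blowing} applies and produces a toric scheme $B_\s\to T^{\Gamma_\s}$ described by the fan $\Sigma(\mathcal{I}_\s)$ subdividing $\s$ together with the semigroups $\Gamma_{\s,i}$ defined by formula~(\ref{Gamma}).

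Next I would glue the pieces $B_\s$. Given two cones $\s,\s'\in\Sigma$ with intersection $\t=\s\cap\s'$, the restrictions of $\mathcal{I}$ to the common open affine chart $T^{\Gamma_\t}$ extracted from either side agree; since the normal-fan construction is local on the base, the fan $\Sigma(\mathcal{I}_\s)$ restricted to $\t$ coincides with $\Sigma(\mathcal{I}_{\s'})$ restricted to $\t$, both equalling the fan associated with $\mathcal{I}_\t=\mathcal{I}_\s\cdot A[t^{\Gamma_\t}]$. Concretely, if $f=t^\gamma$ realises $T^{\Gamma_\t}$ as a principal open of $T^{\Gamma_\s}$, the Newton polyhedron of $\mathcal{I}_\t$ is the Minkowski sum $\Newton_\t(\mathcal{I}_\s)+\R_{\geq 0}(-\gamma)$, whose normal fan is precisely the subfan of $\Sigma(\mathcal{I}_\s)$ cut out by the faces contained in $\t$. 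Hence the resulting global fan $\Sigma(\mathcal{I}):=\bigcup_\s \Sigma(\mathcal{I}_\s)$ is well defined, refines $\Sigma$, and carries a coherent family of semigroups $\Gamma(\mathcal{I})$.

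Now I would check the two axioms of Definition \ref{def-general-toric} for the triple $(N,\Sigma(\mathcal{I}),\Gamma(\mathcal{I}))$. Axiom (i) is immediate since each $\Gamma_{\s,i}$ was built by adjoining integral vectors to $\Gamma_\s$, so it still generates $M$, and the cone it spans is the corresponding maximal cone of $\Sigma(\mathcal{I}_\s)$ by Lemma \ref{reduc}. For axiom (ii) the verification for two maximal cones lying over the same $\s$ is precisely what is done in the proof of Proposition \ref{blowing}; the verification for two maximal cones lying over different $\s,\s'\in\Sigma$ reduces, via the first-paragraph gluing on $T^{\Gamma_\t}$, to the affine case applied over $\t$, combined with the hypothesis $\Gamma_\t=\Gamma_\s+M(\t,\Gamma_\s)$ from Definition \ref{def-general-toric} for the original data. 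The universal property of the blowing up is local on the base, so the local assertions of Proposition \ref{blowing} patch to give the global identification $B=\operatorname{Bl}_{\mathcal{I}} T_\Sigma^\Gamma$ with the prescribed chart-by-chart description.

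The main obstacle will be the compatibility of the Newton subdivisions on overlaps, i.e.\ verifying that the fan $\Sigma(\mathcal{I}_\s)\cap\t$ obtained by slicing the affine construction with the face $\t\leq\s$ agrees with the fan produced by the localization $\mathcal{I}_\t$; once this is established, the semigroup compatibility and the fact that $B$ is the blowing up follow formally from the affine proposition and the local-on-base character of the $\operatorname{Proj}$ of the Rees algebra.
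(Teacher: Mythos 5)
Your argument is correct, and since the paper states this corollary without proof, your write-up supplies exactly the details the authors leave to the reader: restrict the equivariant sheaf to each invariant affine chart (where $T^M$-stability forces it to be monomial), apply Proposition \ref{blowing} chart by chart, and glue. The key computational fact you isolate — that passing from $T^{\Gamma_\s}$ to a principal invariant open $T^{\Gamma_\t}$ ($\t\leq\s$) replaces $\Newton_\s(\mathcal{I}_\s)$ by the Minkowski sum $\Newton_\s(\mathcal{I}_\s)+\R_{\geq 0}(-\gamma)$ with $\gamma\in\int(\check\s\cap\t^\perp)$, whose domains of linearity over $\t$ are just the cones $\s_i\cap\t$ — is indeed what makes the fans $\Sigma(\mathcal{I}_\s)$ and $\Sigma(\mathcal{I}_{\s'})$ agree along $\t=\s\cap\s'$. (There is a harmless slip of notation where you write $\Newton_\t(\mathcal{I}_\s)$ for $\Newton_\s(\mathcal{I}_\s)$.)

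One remark worth making about the route: once you have the local description from Proposition \ref{blowing} on each chart and the fact that blowing up is local on the base, you do not strictly need to verify axioms (i) and (ii) of Definition \ref{def-general-toric} for the glued data by hand. Since blowing up a separated scheme is separated, the total space of the blow-up is a separated irreducible variety containing $T^M$ as a dense open with an extended action, and it is covered by the $T^M$-invariant affine charts $\mathrm{Spec}\,A[t^{\Gamma_i}]$ supplied by the affine proposition; it therefore satisfies Definition \ref{new}, and Theorem \ref{cate} then automatically produces the triple $(N,\Sigma(\mathcal{I}),\Gamma(\mathcal{I}))$. Your more explicit, constructive check of the fan and semigroup compatibility is equally valid and has the advantage of making the combinatorics of $\Sigma(\mathcal{I})$ visible, at the cost of re-deriving what Theorem \ref{cate} gives for free.
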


\end{section}

\section{Toric morphisms} \label{tm}

Recall that a morphism $\phi \colon T^{M'} \to T^{M}$ of algebraic
tori gives rise to  two group homomorphisms
\[ \phi^*: M \to M' \mbox{ and } \phi_*:
N' \to N
\]
between the corresponding lattices of characters and  between the
corresponding lattices of one-parameter subgroups. The
homomorphisms $\phi^*$ and $\phi_*$ are mutually dual and
determine the morphism $\phi \colon T^{M'} \to T^{M}$ of algebraic
tori. Note that $\phi$ is defined algebraically by
\[
k[t^M] \to k [t^{M'}], \quad t^m \mapsto t^{\phi^* (m)}, \quad  m
\in M.
\]

Now suppose that we have two toric varieties $T^{\Gamma}_{\Sigma}$
and $T^{\Gamma'}_{\Sigma'}$ with respective tori $T^{M}$ and
$T^{M'}$ defined by the  combinatorial data given by the triples
 $(N, \Sigma,
\Gamma)$ and $(N', \Sigma', \Gamma')$ (see Definition
\ref{def-general-toric}).

\begin{definition} \label{map-fan}
The homomorphism $\phi_*$ is a \textit{map of fans with attached semigroups} $(N, \Sigma, \Gamma) \to (N', \Sigma', \Gamma ')$ if for
any $\s' \in \Sigma'$ there exists $\s \in \Sigma$ such that
$\phi^* ( \Gamma_\s) \subset \Gamma_{\s'} '$.
\end{definition}

Note then that  $\phi_*$  is a \textit{map of fans}, that is, for
any $\s' \in \Sigma'$ there is a cone $\s \in \Sigma$ such that
image by $\s'$ by the $\R$-linear extension of $\phi_*$ is
contained in $\s$. See Section 1.5 \cite{Oda}.

\begin{proposition} \label{morphism}
Let $\phi \colon T^{M'} \to T^M$ be a morphism of algebraic tori.
If $\phi_*$ defines a map of fans with attached semigroups $(N,
\Sigma, \Gamma) \to (N', \Sigma', \Gamma')$ then it gives rise to
a morphism: $ \bar{\phi} \colon T_{\Sigma'}^{\Gamma'} \to
T_{\Sigma}^{\Gamma} $ which extends $\phi \colon T^{M'} \to T^M$
and is equivariant with respect to $\phi$. Conversely, if $f
\colon T_{\Sigma'}^{\Gamma'} \to T_{\Sigma}^{\Gamma}$ is an
equivariant morphism with respect to $\phi$ then $\phi_*$ defines
a map of fans with  attached semigroups  $(N', \Sigma', \Gamma')
\to (N, \Sigma, \Gamma)$ and $f= \bar{\phi}$. In addition we have
a commutative diagram
\[
\begin{array}{ccc}
T_{\Sigma'} & \longrightarrow  & T_{\Sigma}
\\
\downarrow && \downarrow
\\
T_{\Sigma'}^{\Gamma'} & \longrightarrow & T_{\Sigma}^{\Gamma}
\end{array}
\]
 where the vertical arrows are
normalizations and the horizontal ones are the toric morphisms which
extend $\phi: T^{M'} \rightarrow T^M$.
\end{proposition}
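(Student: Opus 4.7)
\emph{Forward direction.} For each $\s'\in\Sigma'$ choose a cone $\s=\s(\s')\in\Sigma$ with $\phi^*(\Gamma_\s)\subset\Gamma'_{\s'}$. This inclusion induces a ring map $k[t^{\Gamma_\s}]\to k[t^{\Gamma'_{\s'}}]$, $t^m\mapsto t^{\phi^*(m)}$, hence an affine morphism $\bar\phi_{\s'}\colon T^{\Gamma'_{\s'}}\to T^{\Gamma_\s}\hookrightarrow T^\Gamma_\Sigma$ through the open embedding of Lemma \ref{open-embedding}. On an overlap $T^{\Gamma'_{\s'_1}}\cap T^{\Gamma'_{\s'_2}}=T^{\Gamma'_{\s'_1\cap\s'_2}}$ (Lemma \ref{sum}), the two candidate maps both restrict to $\phi$ on the dense torus $T^{M'}$, so by separatedness of $T^\Gamma_\Sigma$ they agree and glue to a global morphism $\bar\phi$. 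Since the local ring maps carry monomials to monomials, $\bar\phi$ is equivariant with respect to $\phi$.

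\emph{Converse.} Given an equivariant $f$, its restriction to the open tori is $\phi$ by hypothesis. Fix $\s'\in\Sigma'$ and $v'\in\int(\s')\cap N'$. The limit $\lim_{z\to 0}\l_{v'}(z)$ exists in $T^{\Gamma'}_{\Sigma'}$ by the preceding lemma on one-parameter subgroups, so $\lim_{z\to 0}\l_{\phi_*(v')}(z)=\lim_{z\to 0}f(\l_{v'}(z))$ exists in $T^\Gamma_\Sigma$, forcing $\phi_*(v')\in|\Sigma|$; by $\R$-linearity and density of lattice points, $\phi_*(\s')\subset|\Sigma|$. Varying $v'$ continuously in $\int(\s')$ and invoking the orbit stratification of $T^\Gamma_\Sigma$ (Lemma \ref{orbit-gen}) singles out a cone $\s\in\Sigma$ with $\phi_*(\s')\subset\s$. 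Then every orbit of $T^{\Gamma'_{\s'}}$ maps under $f$ to an orbit corresponding to a face of $\s$, so $f$ restricts to a morphism $T^{\Gamma'_{\s'}}\to T^{\Gamma_\s}$. The induced algebra map $k[t^{\Gamma_\s}]\to k[t^{\Gamma'_{\s'}}]$ is $T^{M'}$-equivariant, hence sends monomials to monomials, and agrees with $\phi^*$ on the torus; it must therefore be $t^m\mapsto t^{\phi^*(m)}$, which yields $\phi^*(\Gamma_\s)\subset\Gamma'_{\s'}$ and places $\phi_*$ in the setting of Definition \ref{map-fan}. The equality $f=\bar\phi$ then follows from agreement on $T^{M'}$ and separatedness.

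\emph{Normalization diagram and main obstacle.} The normalization maps $T_{\Sigma'}\to T^{\Gamma'}_{\Sigma'}$ and $T_\Sigma\to T^\Gamma_\Sigma$ are isomorphisms over the tori (Remark \ref{norma}); since $\phi_*$ also defines a map of the classical fans $\Sigma'\to\Sigma$, it induces the classical toric morphism $T_{\Sigma'}\to T_\Sigma$ extending $\phi$. The two compositions in the square both agree with $\phi$ on the dense torus, so by separatedness they coincide. The principal obstacle is in the converse, at the step showing that $\phi_*(\s')$ lies in a \emph{single} cone of $\Sigma$ rather than merely in $|\Sigma|$: an $\R$-linear image of a connected set could a priori cross cone boundaries. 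The remedy is to track the orbit $\orb(\s,\Gamma_\s)$ of $T^\Gamma_\Sigma$ containing $\lim_{z\to 0}\l_{\phi_*(v')}(z)$ as $v'$ ranges over $\int(\s')$, and to use continuity of $f$ together with the orbit decomposition to preclude jumps between distinct orbits; once the single-cone statement is in hand, the semigroup inclusion $\phi^*(\Gamma_\s)\subset\Gamma'_{\s'}$ is a straightforward grading argument.
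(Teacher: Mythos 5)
Your forward direction and the normalization square are essentially the paper's argument: semigroup inclusions give affine morphisms, equivariance is checked on monomials, and separatedness plus agreement on $T^{M'}$ handle the gluing and the commutativity of the square.

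The converse is where you diverge, and where you leave a real gap that the paper closes more efficiently. You first pass through one-parameter subgroups to show $\phi_*(v')\in|\Sigma|$ for each $v'\in\int(\s')\cap N'$ and then try to upgrade this to containment of $\phi_*(\s')$ in a \emph{single} cone. You correctly flag this as the problematic step and gesture at an orbit-tracking remedy, but you do not carry it out, and the one-parameter-subgroup preliminary does not by itself deliver the single-cone conclusion. The paper's proof avoids this detour entirely: equivariance of $f$ through $\phi$ gives directly that $f(y\cdot x)=\phi(y)\cdot f(x)$, so $f$ sends each $T^{M'}$-orbit into a single $T^M$-orbit; in particular $f(\orb(\s',\Gamma'_{\s'}))\subset\orb(\s,\Gamma_\s)$ for a unique $\s\in\Sigma$, and $f(\orb(\t',\Gamma'_{\t'}))\subset\orb(\t,\Gamma_\t)$ for each face $\t'\leq\s'$. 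Continuity of $f$ and the inclusion-reversing orbit--closure correspondence of Proposition~\ref{orbit-aff} then force $\t\leq\s$, so by the decomposition (\ref{orbit}) one gets $f(T^{\Gamma'_{\s'}})\subset T^{\Gamma_\s}$. From there the algebra-map argument you give (monomials to monomials, agreement with $\phi^*$ on the torus) does yield $\phi^*(\Gamma_\s)\subset\Gamma'_{\s'}$ and $f=\bar\phi$. So your remedy is the right idea, but the proposal as written does not complete it; replacing the one-parameter-subgroup step by the direct orbit-equivariance argument both closes the gap and shortens the proof.
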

\textit{Proof.} For any $\s' \in \Sigma'$ there exists a cone $\s
\in \Sigma$ such that the restriction of  $\phi^*$ determines a
semigroup homomorphism $\Gamma_\s \to \Gamma_{\s'}'$. The
corresponding homomorphism of $k$-algebras $k [\Gamma_\s] \to k
[\Gamma_{\s'}']$ defines a morphism:
\[
\bar{\phi}_{\s',\s} \colon T^{\Gamma_{\s'}'} \to T^{\Gamma_\s}
\mbox{ given on closed points by } \bar{\phi}_{\s',\s} (x) = x
\circ \phi^*_{| \Gamma_{\s}},
\]
where $x \in T^{\Gamma_{\s'}'}; x\colon \Gamma_{\s'}' \to k$  is a
homomorphism of semigroups. The morphism $\bar{\phi}_{\s',\s}$ is
equivariant through $\phi$ since for any  $y \in T^{M'}$, $y
\colon M' \rightarrow k^*$ group homomorphism  and  any $x \in
T^{\Gamma_{\s'}'}$ we get:
\[
\bar{\phi}_{\s',\s}  (y \cdot x) = (y\cdot x) \circ  \phi^*_{|
\Gamma_{\s}} = (y \circ \phi^*) \cdot (x \circ \phi^*  _{|
\Gamma_{\s}}) = \phi (y) \cdot \bar{\phi}_{\s',\s} (x).
\]
By gluing-up the affine pieces together we get a morphism
$\bar{\phi} \colon  T_{\Sigma'}^{\Gamma'} \to T_{\Sigma}^{\Gamma}$
which is equivariant with respect to $\phi$.

For the converse, since $f$ is assumed to be equivariant through
$\phi$ the image by $f$ of each orbit of the action of $T^{M'}$ on
$T^{\Sigma'}_{\Gamma'}$ is contained in one orbit of the action of
$T^M$ on $T_{\Sigma}^{\Gamma}$. If $\t' \leq \s'$ and $\s' \in
\Sigma'$ then the orbit $\orb (\s', \Gamma_{\s'}')$ is contained
in the closure of $\orb (\t', \Gamma_{\t'}')$ by Proposition
\ref{orbit-aff}. Then there exist $\s, \t \in \Sigma$ such that
\[ f (\orb (\s', \Gamma_{\s'}')) \subset \orb (\s, \Gamma_{\s}) \mbox{ and }
f (\orb (\t', \Gamma_{\t'}')) \subset \orb (\t, \Gamma_{\t}).\]
Since $f$ is continuous $\orb (\s, \Gamma_{\s})$ must be contained
in the closure of $\orb (\t, \Gamma_{\t})$, hence $\t$ is a face
of $\s$  by Proposition \ref{orbit-aff} and Lemma \ref{orb-clos}.
By (\ref{orbit}) it follows that $f(T^{\Gamma_{\s'}'}) \subset
T^{\Gamma_\s}$. The restriction $f_{|T^{\Gamma_{\s'}'}} \colon
T^{\Gamma_{\s'}'} \to T^{\Gamma_\s}$ is  equivariant with respect
to $\phi: T^{M'} \rightarrow T^{M}$. Hence $f_{|T^{\Gamma_{\s'}'}}
\colon T^{\Gamma_{\s'}'} \to T^{\Gamma_\s}$ is defined
algebraically by the homomorphism of $k$-algebras $k
[t^{\Gamma_\s}] \rightarrow k [t^{\Gamma_{\s'}'}]$, which is
obtained by restriction from the homomorphism of $k$-algebras $k
[t^{M}] \rightarrow k [t^{M'}]$ which maps $t^m \mapsto t^{\phi^*
(m)}$ for $m \in M$. This implies that $\phi^* (\Gamma_\s) \subset
\Gamma_{\s'}'$ and also that $f= \bar{\phi}$.

Since $\phi_*$ is a map of fans it defines a toric
morphism between the normalizations of $T^{\Gamma'}_{\Sigma'}$ and
 $T^{\Gamma}_{\Sigma}$. Finally, it is easy to check  that the diagram above
 is commutative.
 \hfill $\ {\Box}$

It is sometimes useful to consider morphisms of toric varieties which send
the torus of the source into a non dense orbit of the target:
Let $(N, \Sigma, \Gamma)$ and $( N', \Sigma', \Gamma')$ be two
triples defining toric varieties $T^{\Gamma}_{\Sigma}$ and
$T^{\Gamma'}_{\Sigma'}$. Let $\t $ be a cone of  $\Sigma$. Suppose
that we have  a morphism of algebraic tori
$ \phi:
T^{M'} \to T^{M(\t, \Gamma_\t)}$
such that $\phi_* : N' \to N(\t, \Gamma_\t)$ defines a map of fans with attached semigroups $(N', \Sigma', \Gamma') \to ( N   (\t,
\Gamma_\t) , \Sigma (\t) , \Gamma(\t))$. Then by Proposition
\ref{morphism} and Lemma \ref{orb-clos} we have a toric morphism
\[
\bar{\phi}: T_{\Sigma'}^{\Gamma'} \to T_{\Sigma(\t)}^{\Gamma(\t)}.
\]
Let us denote by $n \colon  T_{\Sigma} \rightarrow
T^{\Gamma}_{\Sigma}$ the normalization map and by  $\bar{i}_\t
\colon  T_{\Sigma(\t)} \to T_\Sigma $ the closed embedding of the
closure of $\orb (\t)$ in $T_\Sigma$. The following Proposition is
consequence of Proposition \ref{morphism} and Lemma
\ref{orb-clos}.

\begin{proposition} \label{morphism-emb}
The composite of $\bar{\phi}$ with the closed embedding $i_\t:
T_{\Sigma(\t)}^{\Gamma(\t)} \hookrightarrow T_\Sigma^\Gamma$ lifts
to the normalization of  $T_\Sigma^\Gamma$, i.e., there exists a
toric morphism $\psi: T_{\Sigma'}^{\Gamma'} \to  T_{\Sigma (\t)} $
such that
$
 i_\t \circ \bar{\phi} = n \circ \bar{i}_\t \circ \psi$
 if and only if
there is a lattice homomorphism $ \varphi^*: M(\t) \rightarrow M'
$ such that  $\varphi^*_{|  M(\t, \Gamma_\t)} = \phi^*$ and then
$\psi= \bar{\varphi}$.
\end{proposition}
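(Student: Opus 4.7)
The plan is to recast the lifting condition by factoring the normalization map of $T_\Sigma^\Gamma$ through the normalization of the orbit-closure stratum, and then to translate the geometric problem into a statement about the finite-index inclusion of lattices $M(\t, \Gamma_\t) \subset M(\t)$ which encodes the normalization at the torus level.

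First I would observe that by Remark \ref{norma} applied to $T_{\Sigma(\t)}^{\Gamma(\t)}$, the normal toric variety $T_{\Sigma(\t)}$ is the normalization of $T_{\Sigma(\t)}^{\Gamma(\t)}$, with normalization map $n_\t$, and that the compatibility of normalization with the closed embedding $i_\t$ yields $n \circ \bar{i}_\t = i_\t \circ n_\t$. Since $i_\t$ is a closed embedding and hence a monomorphism of schemes, the equation $i_\t \circ \bar{\phi} = n \circ \bar{i}_\t \circ \psi$ is equivalent to $\bar{\phi} = n_\t \circ \psi$. Thus the task reduces to proving that a toric morphism $\psi \colon T_{\Sigma'}^{\Gamma'} \to T_{\Sigma(\t)}$ with $n_\t \circ \psi = \bar{\phi}$ exists if and only if $\phi^*$ extends to a lattice homomorphism $\varphi^* \colon M(\t) \to M'$.

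For the $(\Rightarrow)$ direction, given such $\psi$, its equivariant restriction to the dense torus $T^{M'} \subset T^{\Gamma'}_{\Sigma'}$ maps into the dense torus $T^{M(\t)} \subset T_{\Sigma(\t)}$ and defines a morphism of tori with character map $\varphi^* \colon M(\t) \to M'$. Restricting the equation $\bar{\phi} = n_\t \circ \psi$ to tori and dualizing, using that the torus restriction of $n_\t$ is dual to the lattice inclusion $M(\t, \Gamma_\t) \hookrightarrow M(\t)$, gives $\phi^* = \varphi^*|_{M(\t, \Gamma_\t)}$. For the $(\Leftarrow)$ direction, the extension $\varphi^*$ defines a morphism of tori $\varphi \colon T^{M'} \to T^{M(\t)}$ with $n_\t|_{\text{tori}} \circ \varphi = \phi$. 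To promote it to a toric morphism $\bar{\varphi} \colon T_{\Sigma'}^{\Gamma'} \to T_{\Sigma(\t)}$ via Proposition \ref{morphism}, I need to verify that $\varphi_*$ defines a map of fans with attached semigroups for the normal toric datum $(N(\t), \Sigma(\t), \{\check{\bar{\s}} \cap M(\t)\}_{\bar{\s} \in \Sigma(\t)})$ of $T_{\Sigma(\t)}$. The cone condition $\varphi_*(\s') \subset \bar{\s}$ is inherited from the corresponding condition on $\phi_*$, since $N(\t)$ and $N(\t, \Gamma_\t)$ span the same real vector space and the underlying cones of $\Sigma(\t)$ are the same. The semigroup condition $\varphi^*(\check{\bar{\s}} \cap M(\t)) \subset \Gamma'_{\s'}$ uses the saturation relation between $\Gamma_\s \cap \t^\bot$ and $\check{\bar{\s}} \cap M(\t)$ together with $\phi^*(\Gamma_\s \cap \t^\bot) \subset \Gamma'_{\s'}$. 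Setting $\psi := \bar{\varphi}$, both $\bar{\phi}$ and $n_\t \circ \bar{\varphi}$ are toric morphisms to $T_{\Sigma(\t)}^{\Gamma(\t)}$ whose restrictions to $T^{M'}$ coincide with $\phi$, so by the uniqueness clause of Proposition \ref{morphism} they are equal.

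The hard part is the semigroup verification just mentioned: deducing $\varphi^*(m) \in \Gamma'_{\s'}$ for every $m \in \check{\bar{\s}} \cap M(\t)$. Since $M(\t, \Gamma_\t)$ has finite index in $M(\t)$, one obtains directly only that some multiple $km$ lies in $\Gamma_\s \cap \t^\bot$, whence $k \varphi^*(m) = \phi^*(km) \in \Gamma'_{\s'}$; extracting the refined statement $\varphi^*(m) \in \Gamma'_{\s'}$ is the essential geometric content, expressing the compatibility between the possibly non-normal semigroup $\Gamma'_{\s'}$ and the saturation $\check{\bar{\s}} \cap M(\t)$ that is intrinsic to $\bar{\varphi}$ being well defined as a toric morphism into the normalization.
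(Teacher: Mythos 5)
Your reduction is built on a notational pitfall that is worth spelling out. The variety $T_{\Sigma(\t)}$ in the statement is, by the paper's own declaration just before the proposition, the closure of $\orb(\t)$ inside the normal variety $T_\Sigma$, hence the normal toric variety for the lattice $N(\t)=N/N_\t$; whereas the normalization of $T_{\Sigma(\t)}^{\Gamma(\t)}$ (whose torus is $T^{M(\t,\Gamma_\t)}$) is the normal toric variety for the generally larger lattice $N(\t,\Gamma_\t)$. When the index $i(\t,\Gamma_\t)>1$ these differ, and the map $n_\t\colon T_{\Sigma(\t)}\to T_{\Sigma(\t)}^{\Gamma(\t)}$ for which the factorization $n\circ\bar{i}_\t = i_\t\circ n_\t$ actually holds is finite of degree $i(\t,\Gamma_\t)$, not birational: its restriction to the dense tori is the surjection dual to $M(\t,\Gamma_\t)\hookrightarrow M(\t)$. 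You observe this very duality a few lines later, which is inconsistent with calling $n_\t$ a normalization (normalization is an isomorphism over the torus). Remark \ref{norma} is the wrong citation, but the factorization and the reduction $i_\t\circ\bar\phi=n\circ\bar{i}_\t\circ\psi\Leftrightarrow\bar\phi=n_\t\circ\psi$ via the monomorphism $i_\t$ are both correct once $n_\t$ is identified properly; indeed the non-triviality of $n_\t$ on tori is precisely what makes the statement non-vacuous.

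The gap you acknowledge at the end is not merely a missing verification: it is a genuine obstruction, and the closing sentence that declares the needed inclusion ``intrinsic to $\bar\varphi$ being well defined'' is circular (well-definedness of $\bar\varphi$ is exactly what is at stake). The hypotheses give $\phi^*(\Gamma_\s\cap\t^\bot)\subset\Gamma'_{\s'}$, and your multiple-chasing yields $k\varphi^*(m)\in\Gamma'_{\s'}$ for $m\in\check{\bar\s}\cap M(\t)$ and some $k\geq 1$; when $\Gamma'_{\s'}$ is not saturated one cannot divide by $k$. Concretely, take $T^\Gamma_\Sigma$ the Whitney umbrella with $M=\Z^2$, $\Gamma=\langle(1,0),(0,2),(1,1)\rangle$ and $\t=\R_{\geq 0}(1,0)$, so that $M(\t,\Gamma_\t)=\Z(0,2)$ has index $2$ in $M(\t)=\Z(0,1)$; take the source to be the cuspidal cubic $\mathrm{Spec}\,k[t^2,t^3]$, i.e. $M'=\Z$, $\Gamma'_{\s'}=\langle 2,3\rangle$. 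With $\phi^*((0,2))=2$ all the hypotheses hold, and $\varphi^*((0,1))=1$ is the unique extension, yet $\varphi^*(\Z_{\geq 0}(0,1))=\Z_{\geq 0}\not\subset\langle 2,3\rangle$; algebraically, the putative $\psi$ would have to send the coordinate of $T_{\Sigma(\t)}$ to an element of $k[t^2,t^3]$ whose square is $t^2$, and no such element exists. Your argument (and the stated criterion) does succeed when $T^{\Gamma'}_{\Sigma'}$ is normal, as in the paper's Whitney-umbrella illustration, since then $\Gamma'_{\s'}=\check{\s'}\cap M'$ is saturated and the multiple-chasing suffices; so you should either invoke such a hypothesis explicitly or supply an argument that actually rules out the phenomenon above rather than asserting that it cannot occur.
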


\begin{example}
By Proposition \ref{morphism-emb} the map $ u \mapsto (u,0,0) $,
which parametrizes the singular locus of the Whitney umbrella $\{
x_1^2 x_2 - x_3^2 =0 \}$  does not lift to the normalization while
$u \mapsto (u^2,0,0) $ does.
\end{example}

\section{Abstract toric varieties}

We recall the \textit{usual} definition of toric variety.
\begin{definition} \label{usual}
A toric variety $X$ is an irreducible (separated) algebraic
variety equi\-pped with an action of an algebraic torus $T$
embedded in $X$ as a Zariski open set such that the action of $T$
on $X$ is morphism which extends the action of $T$ over itself by
multiplication.
\end{definition}

As stated in Proposition \ref{affine-inv} any affine toric variety
is  the spectrum of certain semigroup algebra. Gel{\cprime}fand,
Kapranov, and Zelevinsky have defined  and studied those
projective toric varieties which are equivariantly embedded in the
projective space, which is viewed as a toric variety, see
\cite{GKZ}, Chapter 5.

The following Theorem, which is consequence of a more general
result of Sumihiro, provides the key to establish a combinatorial
description of normal toric varieties.
\begin{theorem} (see \cite{Sumihiro-I}) \label{Sh}
Any normal toric variety $X$ has a finite  covering by $T$-invariant
affine normal toric varieties.
\end{theorem}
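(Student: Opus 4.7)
\textit{Proof plan.}
The strategy is to reduce the statement to its local version: every point $x\in X$ admits a $T$-stable affine open neighborhood, whence the finite covering follows by quasi-compactness of $X$. Fix $x\in X$ and pick any (not necessarily $T$-stable) affine open neighborhood $U\ni x$. Set $D:=X\setminus U$. By normality of $X$ combined with the affineness of $U$, the complement $D$ is of pure codimension one (a standard consequence of Hartogs' extension), so it defines an effective Weil divisor on $X$; the invertible sheaf $\O_X(D)$ carries a canonical section $s_0$ whose zero locus is exactly $D$, and in particular $s_0(x)\neq 0$.

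The central technical step is to $T$-linearize some positive multiple $\O_X(mD)$. Letting $a, p\colon T\times X\to X$ denote the action map and second projection, a $T$-linearization is an isomorphism $a^*\O_X(mD)\simeq p^*\O_X(mD)$ satisfying the cocycle identity on $T\times T\times X$. The difference line bundle $a^*\O_X(D)\otimes p^*\O_X(D)^{-1}$ is trivial along $\{1\}\times X$; a descent argument exploiting the normality of $X$ and the triviality of $\mathrm{Pic}(T)$ shows it is trivial on $T\times X$. The resulting cocycle obstruction lies in the character group $\mathrm{Hom}(T,\mathbf{G}_m)$, a torsion-free lattice, so after replacing $D$ by a suitable multiple $mD$ the obstruction vanishes and $\O_X(mD)$ acquires a $T$-linearization.

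With the linearization in hand, the global sections decompose into $T$-weight spaces
\[
H^0(X,\O_X(mD))=\bigoplus_\chi H^0(X,\O_X(mD))_\chi .
\]
Writing $s_0^m=\sum_\chi s_\chi$ and using $s_0(x)\neq 0$, at least one eigencomponent $s_\chi$ does not vanish at $x$. Its non-vanishing locus $X_{s_\chi}$ is $T$-stable (since $T$ preserves the zero locus of an eigenvector), contains $x$, and is affine: after possibly passing to a further power, the $T$-invariant support of the zero divisor of $s_\chi$ contains $D$, so $X_{s_\chi}$ embeds as a principal open subset of the affine $U$. Finitely many such neighborhoods cover $X$ by Noetherianness, yielding the desired covering.

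The main obstacle lies in constructing the $T$-linearization: the descent of line bundles from $T\times X$ and the control of the cocycle obstruction require the full strength of both the normality hypothesis on $X$ and the specific algebraic structure of the torus (trivial Picard group, torsion-free character lattice). The affineness of $X_{s_\chi}$ is not formally automatic either, but becomes standard once the linearization is available.
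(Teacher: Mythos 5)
The paper states Theorem~\ref{Sh} with a citation to Sumihiro and gives no proof of its own, so there is no in-paper argument to compare against; what follows is an assessment of your sketch on its own terms.

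Your reduction to the local statement, the construction of the divisor $D=X\setminus U$ via normality, and the appeal to $T$-linearizability of a power of $\O_X(D)$ are all correct and do reflect the standard strategy (Mumford, GIT, Ch.~1, \S 3; Sumihiro's paper). The gap is in the last step. You assert that, after passing to a further power, the eigencomponent $s_\chi$ of $s_0^m$ that does not vanish at $x$ has a zero divisor whose support contains $D$, so that $X_{s_\chi}\subset U$ and affineness follows by restriction. This is false even in the simplest case. Take $X=\P^1$ with $T=\mathbf{G}_m$ acting by $t\cdot[a:b]=[ta:b]$, $x=[2:1]$, and $U=\P^1\setminus\{[1:1]\}$, so $D=\{[1:1]\}$ and $s_0=a-b$. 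The eigencomponents of $s_0^m=(a-b)^m$ are the monomials $a^jb^{m-j}$, whose divisors are supported on $\{0\}\cup\{\infty\}$ and never contain $D$, for any $m$. In this example $X_{s_\chi}$ happens to be affine anyway because the bundle is ample on a curve, but your proposed mechanism (containment of $D$) simply does not hold, and in higher dimension one cannot conclude affineness of $X_{s_\chi}$ from the affineness of $X\setminus D$ for a different, non-comparable representative of the linear system. You flagged this point yourself as ``not formally automatic,'' but the fix you suggest does not work.

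The missing ingredient in Sumihiro's actual argument is twofold: one first produces a $T$-equivariant completion $\bar X$ of $X$ and a $T$-equivariant locally closed embedding of a $T$-stable open neighborhood of $x$ into $\P(V)$ for a $T$-module $V$, equipped with a $T$-linearized \emph{ample} bundle $L$. One then works with the $T$-stable ideal sheaf of the boundary $Z=\bar X\setminus X$: for $n\gg 0$ the sheaf $I_Z\otimes L^n$ is globally generated, its global sections form a rational $T$-module spanned by eigenvectors, and one chooses an eigensection $s$ with $s(x)\neq 0$. Since $s$ vanishes on $Z$, one has $\bar X_s\subset X$, and since $L$ is ample, $\bar X_s$ is affine. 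This is where the affineness really comes from; it relies on ampleness on the completion, not on the support of $\mathrm{div}(s_\chi)$ containing the arbitrary, non-$T$-stable divisor $D$. Without the equivariant completion and the reduction to the ample situation, the argument as you have written it does not close.
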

The statement of Theorem \ref{Sh} does not hold  if the normality
assumption is dropped.
\begin{example}
Let $C \subset \P^2_\C$ be the projective nodal cubic with
equation $y^2 z - x^2( x+ z)$. It is a rational curve with a node
singularity at $P = (0:0:1)$ and only one point $Q = (0:1:0)$ at
the line of infinity  $z=0$. The curve $C$ is rational and has a
parametrization $\p: \P^1_\C \rightarrow C$ such that $\p(0) =
\p(\infty) = P$ and $\p (1) = Q$. Then we have that $\p_{| \C^* }
\colon \C^* \rightarrow C \setminus \{ P \}$ is an isomorphism.
The multiplicative action of $\C^*$ on $\P^1_\C$ corresponds by
$\p$ to the group law action on the cubic hence it is algebraic.
It follows that $C$ is a toric variety with respect to Definition
\ref{usual}. Notice that $C$ is the only open set containing $P$
which is invariant by the action of $\C^*$. This example is also a
projective toric curve which does not admit any equivariant
embedding in the projective space (see \cite{Oda-M} page 4 and
\cite{GKZ} Chapter 5, Remark 1.6). 
\end{example}

\begin{definition} \label{good-action}
An action of a group on an algebraic variety $X$ is \textit{good} if $X$ is covered by a finite number of 
affine open subsets which are invariant by the action.
\end{definition}

We modify the abstract definition of toric varieties as follows:
\begin{definition}
\label{new} A toric variety $X$ is an irreducible separated
algebraic variety equi\-pped with a good action of an algebraic torus
$T$ embedded in $X$ as a Zariski open set such that the action of
$T$ on $X$ extends the action of $T$ on itself by multiplication.
\end{definition}

\begin{theorem} \label{cate}
If $X$ is a toric variety in the sense of Definition \ref{new}
with torus $T$, then there exists a triple $(N, \Sigma, \Gamma)$
as in Definition \ref{def-general-toric} and an isomorphism
$\varphi \colon T \to T^M$ such that the pair $(T,X)$ is equivariantly isomorphic
to $(T^M, T^{\Gamma}_\Sigma)$ with respect to $\varphi$.
\end{theorem}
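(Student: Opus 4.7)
The plan is to recover the combinatorial data $(N,\Sigma,\G)$ of Definition \ref{def-general-toric} from the covering assumed in Definition \ref{new}. Let $M := \mathrm{Hom}_{\mathrm{alg.gr.}}(T,\mathbf{G}_m)$ be the character lattice of the torus $T$ and $N := \mathrm{Hom}(M,\Z)$; the evaluation pairing provides the isomorphism $\varphi \colon T \to T^M$ through which we identify $T$ with $T^M$.

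Fix a finite $T^M$-invariant affine open cover $X = U_1 \cup \cdots \cup U_r$. By Proposition \ref{affine-inv} each $U_i$ is $T^M$-equivariantly isomorphic to $T^{\G_i}$ for a finitely generated semigroup $\G_i \subset M$ with $\Z\G_i = M$. Let $\s_i \subset N_\R$ be the cone dual to $\R_{\geq 0}\G_i$; it is strictly convex because $\Z\G_i = M$. For each pair $(i,j)$, the intersection $U_i \cap U_j$ is $T^M$-invariant, and by separatedness of $X$ it is also affine, being the preimage of $U_i \times U_j$ under the closed diagonal immersion $X \hookrightarrow X \times X$. Applying Lemma \ref{open-embedding} inside $U_i$ and inside $U_j$ produces unique faces $\t_{ij} \leq \s_i$ and $\t_{ji} \leq \s_j$ for which
\[
T^{\G_{i,\t_{ij}}} \;\cong\; U_i \cap U_j \;\cong\; T^{\G_{j,\t_{ji}}}
\]
as $T^M$-equivariant varieties. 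Since the semigroup attached to an affine $T^M$-toric variety is intrinsic (it is the set of characters of $T^M$ that extend to regular functions), Proposition \ref{affine-inv} forces $\G_{i,\t_{ij}} = \G_{j,\t_{ji}}$ as subsemigroups of $M$; their $\R_{\geq 0}$-spans $\check\t_{ij}$ and $\check\t_{ji}$ coincide, so $\t_{ij} = \t_{ji}$ is a common face of $\s_i$ and $\s_j$.

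The main obstacle is to identify this common face with $\s_i \cap \s_j$, which is what makes the family $\{\s_i\}$ together with all their faces a fan. I would argue via limits of one-parameter subgroups: for $\nu \in \s_i \cap \s_j \cap N$ the subgroup $\l_\nu \colon k^* \to T^M$ extends to $0$ in both $U_i$ and $U_j$ (in an affine chart the extension exists iff $\nu$ lies in the cone dual to the semigroup), and by the valuative criterion of separatedness the two limits coincide and therefore lie in $U_i \cap U_j = T^{\G_{i,\t_{ij}}}$, forcing $\nu \in \t_{ij} \cap N$. The reverse inclusion is immediate since $\t_{ij}$ is already a face of each $\s_i$, and by rationality of the cones the equality $\s_i \cap \s_j = \t_{ij}$ follows from equality on lattice points.

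With this in place set $\Sigma := \{\t : \t \leq \s_i \text{ for some } i\}$ and, for each $\t \in \Sigma$, define $\G_\t := \G_{i,\t}$ for any $i$ with $\t \leq \s_i$; well-definedness follows from the previous paragraph together with Lemma \ref{conditions-aff}(iii). Condition i.~of Definition \ref{def-general-toric} is built in, and condition ii.~is precisely Lemma \ref{conditions-aff}(iii). The $T^M$-equivariant identifications $U_i \cong T^{\G_{\s_i}}$ and $U_i \cap U_j \cong T^{\G_{\s_i \cap \s_j}}$ then show that the gluing recipe of Definition \ref{def-general-toric} for $T^\G_\Sigma$ reproduces the gluing of $X$ from the $U_i$, yielding the required $T^M$-equivariant isomorphism $T^\G_\Sigma \cong X$.
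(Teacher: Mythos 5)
Your proof is correct, but the crucial step is handled by a genuinely different mechanism than in the paper. Both arguments reduce to showing that when two invariant affine charts $T^{\Gamma_{\sigma_i}}$ and $T^{\Gamma_{\sigma_j}}$ intersect in some $T^{\Gamma_\tau}$ that is simultaneously a face chart on each side, the cone $\tau$ must equal $\sigma_i \cap \sigma_j$. The paper proceeds algebraically: separatedness makes the diagonal $T^{\Gamma_\tau} \to T^{\Gamma_{\sigma_i}} \times T^{\Gamma_{\sigma_j}}$ a closed immersion, so $k[t^{\Gamma_{\sigma_i}}] \otimes_k k[t^{\Gamma_{\sigma_j}}] \to k[t^{\Gamma_\tau}]$ is surjective, which at the level of semigroups reads $\Gamma_\tau = \Gamma_{\sigma_i} + \Gamma_{\sigma_j}$; passing to generated cones and dualizing then yields $\tau = \sigma_i \cap \sigma_j$. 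You instead invoke the valuative criterion of separatedness on one-parameter subgroups: for $\nu \in \sigma_i \cap \sigma_j \cap N$ the limit of $\lambda_\nu$ exists in each chart, uniqueness of the lift forces the two extensions of $\mathrm{Spec}\, k[[z]] \to X$ to agree, hence the extension factors through $U_i \cap U_j \cong T^{\Gamma_\tau}$ and $\nu \in \tau$; combined with the trivial containment $\tau \subseteq \sigma_i \cap \sigma_j$, this gives the equality. Both routes are valid. The paper's stays entirely inside the semigroup formalism and directly delivers $\Gamma_\tau = \Gamma_{\sigma_i} + \Gamma_{\sigma_j}$ (later recorded as Lemma \ref{sum}), whereas yours leans on the one-parameter-subgroup limit lemma, which the paper proves by passing to the normalization; your version also quietly leaves to the reader the standard polyhedral check that the collection of all faces of finitely many strictly convex rational cones meeting pairwise along common faces is a fan.
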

\textit{Proof.} We denote by $M$ the lattice of characters of the
torus $T$ hence $T= T^M$ and $N$ is the dual lattice of $M$.

By Proposition \ref{affine-inv} an affine $T^M$-invariant open
subset is of the form
 $T^{\Gamma_\s}$ where ${\Gamma_\s}$ is a subsemigroup of finite type of $M$ such that
$\Z {\Gamma_\s} = M$, and $\s  \subset N_\R$ is the dual
 cone of $\check\s =  \R_ {\geq 0}  {\Gamma_\s} \subset M_\R$.
 By Lemma  \ref{open-embedding} the open affine $T^M$-invariant subsets of  $T^{ \Gamma_\s} $ are
$T^{ \Gamma_\t} $, for $\t \leq \s$, where
 $\Gamma_\t = \Gamma_\s + M (\t, \Gamma_\s )$.

 By definition $X$ is covered
by a finite number of $T^M$-invariant affine open subsets of the form
$\{ T^{\Gamma_\s} \}_{\s \in \Sigma }$. We can assume that if $\s \in \Sigma$ and if
$\t \leq \s$ then $\t \in \Sigma$. We are going to show that $\Sigma$ is a fan in $N_\R$,  hence
$T^ {\Gamma_\s} \ne T^{\Gamma_{\s'}}$ if  $\s \ne \s'$.

We have that for any $\s, \s' \in \Sigma$ the intersection $T^
{\Gamma_\s} \cap T^{\Gamma_{\s'}}$ is an affine open subset of the
separated variety $X$ (see Chapter 2 of \cite{Ha}). It is also a
$T^M$-invariant affine subset of both $T^ {\Gamma_\s} $ and $
T^{\Gamma_{\s'}}$, hence it is of the form $T^ {\Gamma_\t}$. By
Lemma \ref{open-embedding}  we obtain two inclusion of semigroups
$\Gamma_\s \to \Gamma_\t$ and $\Gamma_{\s'} \to \Gamma_{\t}$.
  Since $X$ is separated the diagonal map
$T^ {\Gamma_\t} \to  T^ {\Gamma_\s} \times T^{\Gamma_{\s'}}$  is  a closed embedding (see Chapter 2 of \cite{Ha}).
Algebraically, this implies the surjectivity of the homomorphism
\[ k [ t^{\Gamma_\s} ] \otimes_k k [t^{\Gamma_{\s'}} ] \to k [ t^{\Gamma_\t} ]
\mbox{ ,  determined by } t^\gamma \otimes t^{\gamma'} \mapsto t^ {  \gamma + \gamma'}.
\]
It follows that the homomorphism of semigroups $
 {\Gamma_\s}  \times {\Gamma_{\s'}} \to {\Gamma_\t} $,
 $(\gamma, {\gamma'}) \mapsto {  \gamma + \gamma'}$
is surjective.
This proves that $\Gamma_\t = {\Gamma_\s}  + {\Gamma_{\s'}}$ thus
\[
\R_{\geq 0} \Gamma_\t = \check{\t} =  \R_{\geq 0} ({\Gamma_\s}  + {\Gamma_{\s'}}) = \check{\s} + \check{\s'}
\]
By duality we deduce that $\t = \s \cap \s'$.  By Proposition
\ref{open-embedding}  we obtain
\[
 \Gamma_\t = \Gamma_\s + M (\t, \Gamma_\s) =  \Gamma_{\s'} + M (\t, \Gamma_{\s'}).
\]

In conclusion, $\Sigma$ is a fan in $N_\R$ and if $\Gamma := \{
\Gamma_\s \mid \s \in \Sigma  \}$ the triple $(N, \Sigma, \Gamma)$
satisfies the compatibility properties of Definition
\ref{def-general-toric} and the variety $T^\Gamma_\Sigma$ is
$T^M$-equivariantly isomorphic to $X$.
 \hfill $\ {\Box}$

The following corollary is consequence of  Proposition
\ref{morphism} and Theorem \ref{cate}.

\begin{corollary}  The category whose objects are the triples $(N,
\Sigma, \Gamma)$ of Definition \ref{def-general-toric} and
whose morphisms are the maps of fans with attached semigroups of
Definition \ref{map-fan} is equivalent to the category whose objects are
the toric varieties of Definition \ref{new} and whose morphisms are those
equivariant morphism which extend morphisms of the corresponding
algebraic tori; see Proposition \ref{morphism}.
\end{corollary}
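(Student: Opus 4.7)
The plan is to exhibit an explicit functor $F$ from the category of triples to the category of toric varieties and check the three conditions that characterize an equivalence of categories: essential surjectivity together with bijectivity on each set of morphisms. Since almost all of the content is already packaged in Theorem~\ref{cate} and Proposition~\ref{morphism}, the proof should reduce to assembling these ingredients. On objects, set $F(N,\Sigma,\Gamma):=T^{\Gamma}_{\Sigma}$; on morphisms, send a map of fans with attached semigroups, induced by some $\phi_{*}$, to the equivariant morphism $\bar{\phi}$ produced by Proposition~\ref{morphism}. First I would check that $F$ is a functor: identities are preserved because the identity lattice map trivially sends each $\Gamma_{\s}$ into itself and induces the identity on each affine chart; composition is preserved because the local formula $\bar{\phi}_{\s',\s}(x)=x\circ\phi^{*}|_{\Gamma_{\s}}$ from the proof of Proposition~\ref{morphism} reduces to associativity of composition of semigroup homomorphisms on closed points, and so $\overline{\psi\circ\phi}=\bar{\psi}\circ\bar{\phi}$ on each chart and hence globally after gluing.

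Next I would verify essential surjectivity, which is precisely Theorem~\ref{cate}: for any toric variety $X$ in the sense of Definition~\ref{new}, a choice of isomorphism $\varphi:T\to T^{M}$ of its torus with a standard torus yields a triple $(N,\Sigma,\Gamma)$ together with a $\varphi$-equivariant isomorphism $X\simeq T^{\Gamma}_{\Sigma}=F(N,\Sigma,\Gamma)$. Fullness comes from the direct direction of Proposition~\ref{morphism}, which says that every equivariant morphism extending a morphism of tori is of the form $\bar{\phi}$ for some map of fans with attached semigroups. Faithfulness comes from the converse direction together with density of the torus: two distinct maps of fans with attached semigroups induce distinct maps $\phi^{*}$ on character lattices, hence distinct restrictions to the dense tori, hence distinct morphisms of the ambient toric varieties. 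The only point requiring mild care is functoriality, but once the affine-chart description $\bar{\phi}_{\s',\s}$ is unwound, compatibility with composition is immediate. In that sense the proof presents no real obstacle: all the deep geometric input (separation, orbit stratification, equivariance, and compatibility with normalization) has already been absorbed into Theorem~\ref{cate} and Proposition~\ref{morphism}, and what remains is purely formal.
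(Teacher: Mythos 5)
Your proposal is correct and follows essentially the same route the paper intends: the paper itself offers only the one-line attribution to Theorem~\ref{cate} and Proposition~\ref{morphism}, and you flesh that out by assembling the functor and checking essential surjectivity (Theorem~\ref{cate}), fullness, and faithfulness. One small slip in terminology: fullness comes from the \emph{converse} direction of Proposition~\ref{morphism} (every equivariant morphism is some $\bar\phi$), not the direct one, and faithfulness needs only the density of the torus together with the fact that $\phi_*$ determines $\phi$ on $T^{M'}$, not the converse of Proposition~\ref{morphism}; neither slip affects the validity of the argument.
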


\section{Invertible sheaves on toric varieties}

In this section we describe how some of the classical results in the
study of equivariant invertible sheaves on a normal toric variety
extend to the general case.

Let $T_\Sigma^\Gamma$ denote a toric variety defined by the triple
$(N, \Sigma, \Gamma)$. Recall that if $\s \in \Sigma$ we denote by
$T_{\s} = T^{\check{\s} \cap M}$ the normalization of the chart
$T^{\Gamma_\s}$ and by $T_\Sigma$ the normalization of
$T^{\Gamma}_\Sigma$.

A \textit{support function}  $ h : |\Sigma| \to
\R$ is a continuous function such that for each $\s \in \Sigma$
the restriction $h_{| \s}  \colon \s \to \R$ is linear.  We say
that $h$ is \textit{integral} with respect to $N$ if $h ( |\Sigma|
\cap N) \subset \Z$. We denote by $\mbox{SF} (N, \Sigma)$  the set
of support functions integral with respect to $N$. If $h$ is a
support function integral with respect to $N$ then for any $\s \in
\Sigma$ there exists $m_\s \in M$ such that
\[
h (\nu) = \langle \nu, m_\s \rangle, \quad \mbox{ for all } \nu
\in \s.
\]
Notice that by continuity we have that
\begin{equation}  \label{car-con-nor}
 m_\t = m_\s \mod M(\t)  = M \cap \t^\perp, \mbox{ for } \t \leq \s, \, \s \in \Sigma.
\end{equation}
The set $\{ m_\s \mid \s \in \Sigma \}$ determines $h$ but may not
be uniquely determined.

\begin{definition} \label{sup}
 A support function for the   triple $(N, \Sigma, \Gamma)$
is a support function $ h : |\Sigma| \to \R$ integral with respect
to $N$ which in addition has the compatibility property
\begin{equation} \label{car-con}
 m_\t = m_\s \mod M(\t,
\Gamma_\t) , \mbox{ for } \t \leq \s, \, \s \in \Sigma.
\end{equation}

\end{definition}
We denote by  $\mbox{SF} (N, \Sigma, \Gamma)$ the additive group
of support functions for the triple $(N, \Sigma, \Gamma)$. It is a
subgroup of $\mbox{SF} (N, \Sigma)$. A vector $m \in M$ defines an
element of $\mbox{SF} (N, \Sigma, \Gamma) $ hence we have a
homomorphism $M \to \mbox{SF} (N, \Sigma, \Gamma) $, which is
injective if the support of $\Sigma$ spans $N_\R$ as a real vector
space.

Any $h \in \mbox{SF} (N, \Sigma)$ determines
 $T^M$-invariant \textit{ Cartier divisor} $D_h$
on $T_\Sigma$ by \begin{equation} \label{cartier}
D_{h | T_\s } =
\mbox{{\rm div}} (t^{-m_\s}) \mbox{ for }\s \in \Sigma,
\end{equation}
 where $\mbox{{\rm div}} (g)$ denotes
the \textit{principal Cartier divisor} of the rational function
$g$ on an irreducible variety. Notice that $D_h$ is independent of
the possible choices of different \textit{Cartier data} $\{ m_\s
\mid \s \in \Sigma \}$ defining $h$. If $\s, \s' \in \Sigma$, $\t
= \s \cap \s'$ then $T_\t = T_\s \cap T_{\s'}$ and
(\ref{car-con-nor}) guarantees that $ t^{-m_\s + m_{\s'}}$ and $
t^{m_\s - m_{\s'}}$ are both regular functions on $T_\t$. Any
$T^M$-invariant Cartier divisor on $T_\Sigma$ is of the form $D_h$
for $h \in \mbox{SF} (N, \Sigma)$, i.e., it is defined by Cartier
data.

\begin{lemma} If $h \in  \mbox{SF} (N, \Sigma)$ is defined by the Cartier data $\{ m_\s \mid \s \in \Sigma \}$
then it defines a $T^M$-invariant Cartier divisor on
$T^{\Gamma}_\Sigma$ if and only if (\ref{car-con}) holds, that is,
if and only if $h \in  \mbox{SF} (N, \Sigma, \Gamma)$.
\end{lemma}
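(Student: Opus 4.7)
The plan is to verify directly the gluing condition for $T^M$-invariant Cartier divisors on the toric variety $T^\Gamma_\Sigma$. By construction, $T^\Gamma_\Sigma$ is covered by the affine charts $T^{\Gamma_\s}$, and $T^{\Gamma_\s} \cap T^{\Gamma_{\s'}} = T^{\Gamma_{\s \cap \s'}}$ by Definition \ref{def-general-toric}. The collection of monomial local equations $\{t^{-m_\s}\}_{\s \in \Sigma}$ defines a $T^M$-invariant Cartier divisor on $T^\Gamma_\Sigma$ if and only if, on each overlap $T^{\Gamma_\t}$ with $\t = \s \cap \s'$, the ratio $t^{m_{\s'}-m_\s}$ is an invertible regular function.

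The key algebraic step will be to identify the group of units of the semigroup algebra $k[t^{\Gamma_\t}]$. I would argue that a nonzero element is a unit in $k[t^{\Gamma_\t}]$ if and only if it is of the form $c \, t^m$ with $c \in k^*$ and $m \in M(\t, \Gamma_\t)$: the embedding $k[t^{\Gamma_\t}] \hookrightarrow k[t^{\Z \Gamma_\t}] = k[t^M]$ shows that every unit of $k[t^{\Gamma_\t}]$ is a scalar multiple of a monomial, and if $t^m$ is invertible then both $m$ and $-m$ must lie in $\Gamma_\t$, so $m$ belongs to the maximal subgroup of $\Gamma_\t$, which by Lemma \ref{conditions-aff}, part i., is precisely the minimal face $\Gamma_\t \cap \t^\perp = M(\t, \Gamma_\t)$.

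With this identification, the Cartier gluing condition translates to $m_{\s'} - m_\s \in M(\t, \Gamma_\t)$ for every pair $\s, \s' \in \Sigma$ with $\t = \s \cap \s'$. Necessity of (\ref{car-con}) follows by specializing to $\s' = \t$, which belongs to $\Sigma$ as a face of $\s$: the condition then reduces to $m_\s - m_\t \in M(\t, \Gamma_\t)$, which is exactly (\ref{car-con}). Conversely, if (\ref{car-con}) holds, then for any pair $\s, \s'$ with intersection $\t$ both differences $m_\s - m_\t$ and $m_{\s'} - m_\t$ lie in $M(\t, \Gamma_\t)$, so their difference $m_\s - m_{\s'}$ does as well, yielding the required unit on the overlap.

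The principal subtlety is the identification of the units of the semigroup algebra: one must use that the group of units of $\Gamma_\t$ is the lattice $M(\t, \Gamma_\t)$ rather than the coarser $M(\t) = M \cap \t^\perp$. The finite-index inclusion $M(\t, \Gamma_\t) \subset M(\t)$ recorded in Notation \ref{ind} accounts for the gap between condition (\ref{car-con-nor}), which holds automatically for any $h \in \mathrm{SF}(N, \Sigma)$ (and hence on the normalization $T_\Sigma$), and the strictly stronger compatibility (\ref{car-con}) required on a possibly non-normal $T^\Gamma_\Sigma$. This is precisely where the combinatorial datum $\Gamma$, beyond the fan $\Sigma$, enters the description of invariant Cartier divisors.
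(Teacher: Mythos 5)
Your proof is correct and follows the same route as the paper: reduce to checking that the transition function $t^{m_{\sigma'}-m_\sigma}$ is a unit of $k[t^{\Gamma_\tau}]$ on each overlap, identify the units of $\Gamma_\tau$ with the minimal face $\Gamma_\tau\cap\tau^\perp = M(\tau,\Gamma_\tau)$ via Lemma \ref{conditions-aff}(i), and translate into (\ref{car-con}). The paper compresses this into a single citation of Lemma \ref{conditions-aff}; your version makes explicit the unit-group identification and the easy passage between the pairwise overlap condition and the face condition (\ref{car-con}), both of which are left implicit there.
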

\textit{Proof.} The condition to determine a Cartier divisor is
that for any $\s, \s' \in \Sigma$, $\t = \s \cap \s'$ the
transition function $ t^{-m_\s + m_{\s'}}$ is an invertible
regular function on $T^{\Gamma_\t} = T^{\Gamma_\s} \cap
T^{\Gamma_{\s'}}$. By Lemma \ref{conditions-aff} this is
equivalent to (\ref{car-con}).
 \hfill $\ {\Box}$

We have shown that the group  $\mbox{\rm CDiv}_{T^M}
(T^\Gamma_\Sigma)$ of $T^M$-invariant Cartier divisors on
$T^\Gamma_\Sigma$ can be seen as a subset of  $\mbox{\rm
CDiv}_{T^M} (T_\Sigma)$. The set $\{ \mathrm{div} (t^m) \}_{m \in
M}$ is a subgroup of $\mbox{\rm CDiv}_{T^M} (T^\Gamma_\Sigma)$
consisting of principal Cartier divisors.

 The map
\[
\mbox{SF} (N, \Sigma, \Gamma) \longrightarrow  \mbox{\rm
CDiv}_{T^M} (T^\Gamma_\Sigma), \quad h \mapsto D_h.
\]
is a group isomorphism. The  inverse map sends a Cartier divisor
$D$ on $T_\Sigma^\Gamma$,  given by the Cartier data $\{ m_\s \mid
\s \in \Sigma \}$, to the function
\[
h_D := |\Sigma| \to \R, \quad h_D (\nu) = \langle \nu, m_\s
\rangle \mbox { if } \nu \in \s.
\]

A Cartier divisor on $T_\Sigma$  determines an \textit{invertible
sheaf} $\mathcal{O}_{T_\Sigma} (D)$. If $U$ is an affine open set
in which $D =  \mbox{{\rm div}} (g_U)$ for some rational function
$g_U$  then the set of sections $ H^0 ( U, \mathcal{O}_{T_\Sigma}
(D))$  consists of those rational functions $f$ such that
$fg_U$ is a regular function on $U$.

We denote by $\mathcal{O}_{T^\Gamma_\Sigma}$ the structure sheaf
on the toric variety ${T^\Gamma_\Sigma}$. The \textit{invertible
sheaf} of a $T^M$-invariant Cartier divisor  $D$ on
$T_\Sigma^\Gamma$ is the sheaf  of
$\mathcal{O}_{T^\Gamma_\Sigma}$-modules
$\mathcal{O}_{T_\Sigma^\Gamma} (D)$. By (\ref{cartier}) the set of
sections of this sheaf  on  $T^{\Gamma_\s}$ is
\begin{equation} \label{section}
H^0 ( {T^{\Gamma_\s}}, \mathcal{O}_{T_\Sigma^\Gamma} (D) )  =
t^{m_\s} k [t^{\Gamma_\s} ].
\end{equation}

We denote by $P_D^\Gamma$ the following subset of $M$:
\begin{equation} \label{PDG}
P_D^\Gamma : = \bigcap_{\s \in \Sigma} {m_\s} + \Gamma_\s.
\end{equation}

The set of global sections of the sheaf
$\mathcal{O}_{T^\Gamma_\Sigma}{(D)}$ is equal to
\begin{equation} \label{global-sec}
H^0 (T^\Gamma_\Sigma, \mathcal{O}_{T^\Gamma_\Sigma} (D) ) =
\bigcap_{\s \in \Sigma} t^{m_\s} k [t^{\Gamma_\s}] =  \bigoplus_{m
\in P_D^\Gamma} k t^m.
\end{equation}

\begin{remark} As in the normal case, a $T^M$-invariant Cartier divisor $D$ defines an equivariant line bundle $\mathcal{L}_{D}$ whose sections coincide with those of the invertible sheaf $\mathcal{O}_{T^\Gamma_\Sigma} (D)$. See \cite{Oda}, Chapter 2.\end{remark}
The \textit{Picard group} $\mathrm{Pic} (X)$ of a variety $X$
consists of the isomorphism classes of invertible sheaves in $X$.

\begin{lemma} Suppose that $|\Sigma | = N_\R$.
 For any Cartier divisor $D$ on the toric variety
$T^{\Gamma}_\Sigma$ we have an
$\mathcal{O}_{T^{\Gamma}_\Sigma}$-module isomorphism $
\mathcal{O}_{T^{\Gamma}_\Sigma} (D) \cong
\mathcal{O}_{T^{\Gamma}_\Sigma} (D_h)$ for some $h \in \mathrm{SF}
(N, \Sigma, \Gamma)$. The following are equivalent for $h \in
\mathrm{SF} (N, \Sigma, \Gamma)$.
 \begin{enumerate}
\item[{\rm i.}] $h \in M$

\item[{\rm ii.}] $D_h$ is a principal Cartier divisor.

\item[{\rm iii.}] $\mathcal{L}_{D_h}$ is a trivial line bundle.

\item[{\rm iv.}] The sheaf $\mathcal{O}_{T^{\Gamma}_\Sigma} (D_h)
$ is isomorphic to $\mathcal{O}_{T^{\Gamma}_\Sigma}$ as
$\mathcal{O}_{T^{\Gamma}_\Sigma}$-module.
 \end{enumerate}
\end{lemma}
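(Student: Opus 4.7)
The plan is to separate the statement into two parts: the existence of a support-function representative $h$ for each Cartier divisor class, and the chain of equivalences among (i)--(iv).

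For the first assertion, I would exploit the fact that $\mathrm{Pic}(T^M)=0$, since $T^M$ is a product of copies of $\mathbf{G}_m$. Restricting $D$ to the dense open torus gives $D|_{T^M}=\mathrm{div}(f)$ for some $f\in k(T^M)^{\times}$, so $D':=D-\mathrm{div}(f)$ is linearly equivalent to $D$ and satisfies $D'|_{T^M}=0$. On each chart $T^{\Gamma_\s}$, the divisor $D'$ is locally given by an equation $g_\s$ whose restriction to $T^M$ is a unit, hence $g_\s\in k[t^M]^{\times}=k^*\cdot\{t^m:m\in M\}$. Writing $g_\s=c_\s t^{-m_\s}$, the function $h(\nu):=\langle\nu,m_\s\rangle$ on each $\s\in\Sigma$ is a continuous piecewise-linear integral function on $|\Sigma|$. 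The cocycle relation on the overlap $T^{\Gamma_\s}\cap T^{\Gamma_{\s'}}=T^{\Gamma_{\s\cap\s'}}$ demands that $g_{\s'}/g_\s$ be a unit of $k[t^{\Gamma_\t}]$ with $\t=\s\cap\s'$. By Lemma \ref{conditions-aff}.i, the minimal face of $\Gamma_\t$ is the lattice $M(\t,\Gamma_\t)$, so the units of $k[t^{\Gamma_\t}]$ are exactly $k^*\cdot t^{M(\t,\Gamma_\t)}$, forcing $m_\s-m_{\s'}\in M(\t,\Gamma_\t)$, which is precisely the compatibility (\ref{car-con}). Hence $D'=D_h$ for some $h\in\mathrm{SF}(N,\Sigma,\Gamma)$.

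For the equivalences, (i)$\Rightarrow$(ii) is immediate: if $h=m\in M$, take uniform Cartier data $m_\s=m$, so all local equations equal $t^{-m}$ and $D_h=\mathrm{div}(t^{-m})$ is principal. The implications (ii)$\Rightarrow$(iii) and (iii)$\Rightarrow$(iv) are the standard correspondences: a principal Cartier divisor yields a trivial line bundle, whose sheaf of sections is the trivial invertible sheaf.

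The heart of the proof is (iv)$\Rightarrow$(i). Assuming $\mathcal{O}_{T^\Gamma_\Sigma}(D_h)\cong\mathcal{O}_{T^\Gamma_\Sigma}$, a trivialization is a global section $s$ of $\mathcal{O}(D_h)$ which generates on every chart. By (\ref{global-sec}), $s=\sum_{m\in P_{D_h}^\Gamma}c_m t^m$. For each $\s\in\Sigma$, the restriction $s|_{T^{\Gamma_\s}}$ must be a unit multiple of $t^{m_\s}$; by Lemma \ref{conditions-aff}.i such a unit is a single monomial $c_\s t^{a_\s}$ with $a_\s\in M(\s,\Gamma_\s)\subset \s^\bot$, so $s|_{T^{\Gamma_\s}}=c_\s t^{m_\s+a_\s}$ is itself a single monomial. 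Comparing these monomial restrictions on $T^M$, which lies in every chart, forces $c_\s$ and $m_\s+a_\s$ to be independent of $\s$; call them $c$ and $m$. Then for every $\nu\in\s$, $h(\nu)=\langle\nu,m_\s\rangle=\langle\nu,m-a_\s\rangle=\langle\nu,m\rangle$ since $a_\s\in \s^\bot$. The hypothesis $|\Sigma|=N_\R$ guarantees that every $\nu\in N_\R$ lies in some cone of $\Sigma$, so $h(\nu)=\langle\nu,m\rangle$ on all of $|\Sigma|$ and hence $h=m\in M$. The main subtle point is the identification of the units of the (not necessarily normal) semigroup algebras $k[t^{\Gamma_\s}]$ as monomials indexed by the minimal face, which is handled by Lemma \ref{conditions-aff}.i and is what allows the argument to reduce each local trivializing section to a monomial.
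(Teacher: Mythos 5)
Your treatment of the equivalences (i)--(iv) is correct; the identification of $k[t^{\Gamma_\s}]^\times$ with $k^*\cdot t^{M(\s,\Gamma_\s)}$ via Lemma \ref{conditions-aff}.i is exactly the right non-normal analogue, and the monomial-reduction in (iv)$\Rightarrow$(i) is clean. Since the paper supplies no argument beyond a citation to Oda, writing this out is worthwhile.

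The first assertion, though, has a gap that cannot be filled. After subtracting $\mathrm{div}(f)$ so that $D'|_{T^M}=0$, you take $D'$ to be given by a single equation $g_\s$ on each chart $T^{\Gamma_\s}$. That presupposes that $\mathcal{O}(D')|_{T^{\Gamma_\s}}$ is the trivial invertible sheaf, i.e.\ that $\mathrm{Pic}(T^{\Gamma_\s})=0$. This vanishing is the heart of Oda's Proposition 2.4 and does hold for normal affine toric varieties, but it fails in the non-normal setting: the conductor-square units--Pic exact sequence for $k[t^2,t^3]\subset k[t]$ gives $\mathrm{Pic}(\mathrm{Spec}\,k[t^2,t^3])\cong k$. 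Correspondingly, the complete toric curve $C$ built from the charts $\mathrm{Spec}\,k[t^{\langle 2,3\rangle}]$ on $\R_{\geq 0}$ and $\A^1$ on $\R_{\leq 0}$ satisfies $|\Sigma|=N_\R$ and the axioms of Definition \ref{def-general-toric}, yet $\mathrm{Pic}^0(C)\cong\mathbf{G}_a$ while $\mathrm{SF}(N,\Sigma,\Gamma)/M\cong\Z$; so there exist invertible sheaves on $C$ not isomorphic to any $\mathcal{O}(D_h)$. In other words the first sentence of the lemma is false in the stated generality, and your argument, which faithfully retraces Oda's normal-case strategy, breaks at precisely the step where normality is used.
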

\textit{Proof.} See Proposition 2.4 of \cite{Oda}.  \hfill
${\Box}$

\begin{proposition}
Suppose that $|\Sigma | = N_\R$. Then we have canonical
isomorphisms
\[
 \mathrm{SF}
(N, \Sigma, \Gamma) / M  \to \mathrm{Pic} (T^\Gamma_\Sigma)  \to
\mbox{\rm CDiv}_{T^M} (T^\Gamma_\Sigma) / \{ \mathrm{div} (t^m)
\}_{m \in M},
\]
from which we deduce a canonical injection $\mathrm{Pic} (T^\Gamma_\Sigma)
\rightarrow  \mathrm{Pic} (T_\Sigma)$.
\end{proposition}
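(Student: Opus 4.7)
\textit{Proof plan.} My plan is to read off all three statements as formal consequences of the preceding lemma together with the already-established isomorphism $h \mapsto D_h$ between $\mathrm{SF}(N,\Sigma,\Gamma)$ and $\mathrm{CDiv}_{T^M}(T^\Gamma_\Sigma)$.

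First I would build the map $\mathrm{SF}(N,\Sigma,\Gamma)/M \to \mathrm{Pic}(T^\Gamma_\Sigma)$ by sending the class of $h$ to the isomorphism class of $\mathcal{O}_{T^\Gamma_\Sigma}(D_h)$. The first assertion of the previous lemma (that every invertible sheaf is $\mathcal{O}_{T^\Gamma_\Sigma}(D_h)$ for some $h$) provides surjectivity, while the equivalence (i) $\Leftrightarrow$ (iv) of the same lemma shows that the kernel is precisely $M$, yielding the first canonical isomorphism. Composing its inverse with the isomorphism $h \mapsto D_h$ gives the second isomorphism, once I identify the image of $M \subset \mathrm{SF}(N,\Sigma,\Gamma)$ with the subgroup of principal invariant Cartier divisors. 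This identification is the content of (i) $\Leftrightarrow$ (ii) of the previous lemma, and the hypothesis $|\Sigma| = N_\R$ intervenes in guaranteeing, as noted in Definition \ref{sup}, that the map $M \to \mathrm{SF}(N,\Sigma,\Gamma)$ is injective, so that $m \mapsto \mathrm{div}(t^m)$ is in fact injective and $\{\mathrm{div}(t^m)\}_{m \in M}$ is freely generated by $M$.

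For the canonical injection $\mathrm{Pic}(T^\Gamma_\Sigma) \hookrightarrow \mathrm{Pic}(T_\Sigma)$, I would invoke the normal case (Proposition 2.4 of \cite{Oda}), which gives $\mathrm{SF}(N,\Sigma)/M \cong \mathrm{Pic}(T_\Sigma)$. Since $M(\t, \Gamma_\t) \subset M(\t)$, condition (\ref{car-con}) of Definition \ref{sup} is strictly stronger than (\ref{car-con-nor}), so there is a tautological inclusion $\mathrm{SF}(N,\Sigma,\Gamma) \hookrightarrow \mathrm{SF}(N,\Sigma)$ of subgroups both containing $M$. It descends to a map of quotients, and its injectivity is immediate: an $h \in \mathrm{SF}(N,\Sigma,\Gamma)$ whose class is trivial in $\mathrm{SF}(N,\Sigma)/M$ is literally an element of $M$, hence already trivial in $\mathrm{SF}(N,\Sigma,\Gamma)/M$. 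To identify this injection with the pullback $n^*$ along the normalization $n: T_\Sigma \to T^\Gamma_\Sigma$ of Remark \ref{norma}, I would simply observe on each chart $T_\s \to T^{\Gamma_\s}$ that both sides are defined by the same Cartier datum $m_\s$, so that by (\ref{section}) the local sections $t^{m_\s} k[t^{\Gamma_\s}]$ of $\mathcal{O}_{T^\Gamma_\Sigma}(D_h)$ pull back to $t^{m_\s} k[t^{\check\s \cap M}]$, which are the sections of $\mathcal{O}_{T_\Sigma}(D_h)$.

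There is no serious obstacle: once the preceding lemma is in hand, the proposition is an assembly of bookkeeping. The two mildly delicate points are the role of $|\Sigma| = N_\R$ in making $M$ inject into $\mathrm{SF}(N,\Sigma,\Gamma)$, and the verification that the inclusion of support functions corresponds geometrically to normalization pullback; both follow by a direct chart-by-chart computation.
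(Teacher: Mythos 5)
Your proof is correct, and it fills in precisely the argument the paper compresses to a citation of Oda's Corollary~2.5: surjectivity of $h \mapsto [\mathcal{O}(D_h)]$ from the first assertion of the preceding lemma, kernel $= M$ from the equivalence (i)~$\Leftrightarrow$~(iv), the second isomorphism from $h \mapsto D_h$ together with (i)~$\Leftrightarrow$~(ii), and the injection on Picard groups from the tautological inclusion $\mathrm{SF}(N,\Sigma,\Gamma) \subset \mathrm{SF}(N,\Sigma)$ with the common subgroup $M$ quotiented out. This is the same approach the paper intends; you have merely unfolded it. Two small remarks: the normal-case identification $\mathrm{SF}(N,\Sigma)/M \cong \mathrm{Pic}(T_\Sigma)$ is Oda's Corollary~2.5 rather than Proposition~2.4 (the latter is the source of the lemma, the former of this proposition), and the closing chart-by-chart verification that the combinatorial inclusion agrees with $n^*$, while reassuring, is not strictly required to establish the claimed injection.
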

\textit{Proof.} This follows by using the same arguments as in
Corollary 2.5 \cite{Oda}.  \hfill ${\Box}$

If $\r $ belongs to the $1$-skeleton
$\Sigma(1)$ of the fan $\Sigma$ we denote by $\nu_\r$ the
primitive integral vector for the lattice $N$ in the ray $\r$,
that is the generator of the semigroup $\r \cap N$. We associate
to  $h \in \mbox{SF} (N, \Sigma) $ the polyhedron
\begin{equation} \label{PD}
P_h: = \{ m \in M_\R \mid \langle \nu_\r , m \rangle \geq h
(\nu_\r),  \, \r \in \Sigma (1) \}.
\end{equation}
Recall that
\begin{equation}
P_{l h } = l P_h \mbox{ and } P_h + P_{h'} = P_{h+h'} \label{Plh}
\end{equation}
for any integer $l \geq 1$ and $h, h'  \in \mbox{SF} (N, \Sigma)
$.

\begin{proposition} \label{base-free}
Suppose that
 $|\Sigma |= N_\R$. The following are equivalent for $h \in \mbox{SF} (N, \Sigma,
 \Gamma)$ defining a Cartier divisor $D = D_h$.
 \begin{enumerate}
\item[{\rm i.}] The $\mathcal{O}_{T^\Gamma_\Sigma}$-module
$\mathcal{O}_{T^\Gamma_\Sigma} (D)$ is generated by its global
sections.

\item[{\rm ii.}] $h$ is upper convex, i.e., $h (\nu) + h(\nu')
\leq h (\nu + \nu')$ for all $\nu, \nu' \in N_\R$.

\item[{\rm iii.}]  The polytope $P_h$ has vertices  $\{ m_\s \mid
\s \in \Sigma \}$.
 \end{enumerate}
 If these conditions hold  the convex hull of the set $P_D^\Gamma$ is the
polytope $P_h$ and $h$ is the support function of the polytope
$P_h$.
\end{proposition}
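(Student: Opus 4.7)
My plan is to establish the chain $(i) \Leftrightarrow (iii) \Leftrightarrow (ii)$ and then extract the closing assertion about the convex hull. My starting point for $(ii) \Leftrightarrow (iii)$ is the classical support function / polytope correspondence. Assuming $(ii)$, the piecewise linear concave function $h$ lies pointwise below each of its linear extensions: at a point $\nu_0$ in $\int(\s)$ the differential of $h$ is $m_\s$, so concavity gives $h(\nu) \leq \langle \nu, m_\s\rangle$ for every $\nu \in N_\R$. Combined with $h|_\s = \langle\cdot, m_\s\rangle$ and $|\Sigma|=N_\R$, this yields $h(\nu) = \min_{\s \in \Sigma(d)} \langle \nu, m_\s\rangle$, so each $m_\s$ lies in $P_h$ and is a vertex (for $\nu \in \int(\s)$ the minimum over $P_h$ is attained uniquely at $m_\s$). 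Conversely, if the $m_\s$ are the vertices of $P_h$, the support function of $P_h$ is $\min_\s \langle\nu, m_\s\rangle$, which is concave and agrees with $h$ on each $\s$.

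For $(i) \Leftrightarrow (iii)$ I would first reduce $(i)$ to a monomial condition using (\ref{section}): $\mathcal{O}_{T^\Gamma_\Sigma}(D)|_{T^{\Gamma_\s}}$ is the free rank-one module generated by $t^{m_\s}$, and at the closed orbit $\orb(\s,\Gamma_\s)$ of a top-dimensional $\s$ (a single point since $|\Sigma|=N_\R$) only $t^0$ is a unit among monomials. Hence generation at this point forces the existence of a global section of the form $t^{m_\s}$, i.e., $m_\s \in P_D^\Gamma$ for every $\s \in \Sigma(d)$. Since $P_D^\Gamma \subset P_h \cap M$ always, this gives $(i) \Rightarrow (iii)$. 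Conversely, from $(iii)$ (and hence $(ii)$) we get $m_\s - m_{\s'} \in \check{\s'} \cap M$ for all $\s,\s' \in \Sigma(d)$; the compatibility (\ref{car-con}) applied to $\t = \s \cap \s'$ places $m_\s - m_{\s'}$ inside the sublattice $M(\t, \Gamma_\t)$ spanned by $\Gamma_{\s'} \cap \t^\perp$, and combining this with the gluing axiom $\Gamma_\t = \Gamma_{\s'} + M(\t, \Gamma_{\s'})$ from Definition \ref{def-general-toric} should place $m_\s - m_{\s'}$ inside $\Gamma_{\s'}$, giving $m_\s \in P_D^\Gamma$ and thus $(i)$.

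Once the three conditions are established, the closing assertion follows at once: we have $\{m_\s\} \subset P_D^\Gamma \subset P_h \cap M$ and $P_h$ equals the convex hull of its vertex set $\{m_\s\}$, so $P_h = \mathrm{conv}(P_D^\Gamma)$, while the identification of $h$ with the support function of $P_h$ is the content of the argument for $(ii) \Leftrightarrow (iii)$. I expect the main obstacle to be the direction $(iii) \Rightarrow (i)$: upgrading the conclusion $m_\s - m_{\s'} \in \check{\s'} \cap M$ (from upper convexity) to the stronger $m_\s - m_{\s'} \in \Gamma_{\s'}$ required for $m_\s \in P_D^\Gamma$. This is automatic in the normal case where $\Gamma_{\s'} = \check{\s'} \cap M$, but in general demands a delicate combinatorial argument fusing upper convexity with the compatibility (\ref{car-con}) and the gluing structure of the semigroup system $\Gamma$; this is the genuinely new content beyond the classical normal toric setting.
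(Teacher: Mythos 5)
Your treatment of (ii)~$\Leftrightarrow$~(iii) and your reduction of (i) to the monomial condition ``$m_\s \in P_D^\Gamma$ for every maximal $\s$'' are correct, and since the paper offers no argument here beyond a pointer to Oda's normal-case theorem, your identification of (iii)~$\Rightarrow$~(i) as the genuinely new step is exactly right. (A small rearrangement: from $m_\s\in P_D^\Gamma$ for all $\s$ you first obtain $m_\s-m_{\s'}\in\Gamma_{\s'}\subset\check{\s'}\cap M$, hence (ii), and then (iii) via the equivalence already established; the bare inclusion $P_D^\Gamma\subset P_h$ does not by itself place the $m_\s$ at vertices.)

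The gap you flag, however, is not one that a ``delicate combinatorial argument'' can close --- the implication (iii)~$\Rightarrow$~(i) actually fails as stated. Your chain of reasoning delivers $m_\s-m_{\s'}\in\check{\s'}\cap M(\t,\Gamma_\t)$ with $\t=\s\cap\s'$, and this set is the \emph{saturation} of the face $\Gamma_{\s'}\cap\t^\bot$ inside the lattice it spans; nothing in Definition~\ref{def-general-toric} forces that face to be saturated. Concretely, take $N=M=\Z^2$, let $\Sigma$ be the fan of the four closed quadrants, and put $\Gamma_{\s_1}=\langle(1,0),(0,1)\rangle$, $\Gamma_{\s_2}=\langle(-1,0),(0,1)\rangle$, $\Gamma_{\s_3}=\langle(-1,0),(0,-2),(0,-3)\rangle$, $\Gamma_{\s_4}=\langle(1,0),(0,-2),(0,-3)\rangle$; the gluing axioms hold. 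With Cartier data $m_{\s_1}=(0,0)$, $m_{\s_2}=(1,0)$, $m_{\s_3}=(1,1)$, $m_{\s_4}=(0,1)$, the compatibility (\ref{car-con}) is satisfied (each difference lies in the relevant rank-one lattice $\Z(1,0)$ or $\Z(0,1)$), and $P_h=[0,1]^2$ has exactly these four vertices, so (ii) and (iii) hold. Yet $m_{\s_1}-m_{\s_4}=(0,-1)\notin\Gamma_{\s_4}$, so $m_{\s_1}\notin P_D^\Gamma$ and (i) fails; in fact $P_D^\Gamma=\{(0,1),(1,1)\}$, so the closing assertion $\mathrm{conv}(P_D^\Gamma)=P_h$ fails as well. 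The proposition therefore requires an additional hypothesis --- for instance that each face $\Gamma_\s\cap\t^\bot$ be saturated in $M(\t,\Gamma_\t)$, which is automatic in the normal case --- or its statement must be weakened to (ii)~$\Leftrightarrow$~(iii) together with (i)~$\Rightarrow$~(ii). You were right to single out this direction, but wrong to assume it could be filled as stated.
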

\textit{Proof.}  The proof follows as in the normal case  (see Theorem 2.7 \cite{Oda}). \hfill
${\Box}$

If $|\Sigma| = N_\R$ the support function $h \in \mbox{SF} (N,
\Sigma,
 \Gamma)$, defined by the Cartier data $\{ m_\s \mid \s \in \Sigma
 \}$,
  is \textit{strictly upper convex}
if it is upper convex and in addition
\[
h( \nu) = \langle \nu, m_\s \rangle \mbox{ if and only if } \nu
\in \s \mbox{ , for } \s \in \Sigma.
\]

 Suppose that $h \in \mbox{SF}
(N, \Sigma,
 \Gamma)$ satisfies the equivalent conditions of
Proposition \ref{base-free}. Set $D = D_h$. If $P_{D}^\Gamma = \{
u_1, \dots, u_s \}$ we have a morphism
\begin{equation}\label{embedding}
\Phi_D \colon T^\Gamma_\Sigma \longrightarrow \P^{s-1}_k, \quad
\Phi_D = (t^{u_1} \colon \cdots \colon t^{u_s} )
\end{equation}
(defined in homogeneous coordinates of $\P^{s-1}_k$). The morphism
$\Phi_D$ is equivariant with respect the  map of tori $\Phi_{|T^M}
\colon T^M \to T^{M'}$, where $T^{M'}$ denotes the torus of
$\P^{s-1}_k$ with respect to the fixed coordinates.

\begin{proposition} \label{very-ample}
Suppose that
 $|\Sigma| = N_\R$. The following are equivalent for $h \in \mbox{SF} (N, \Sigma,
 \Gamma)$ defining a Cartier divisor $D = D_h$.
  \begin{enumerate}
\item[{\rm i.}] $D$ is very ample.

\item[{\rm ii.}]  $h$ is strictly upper convex and for all $\s \in
\Sigma (d)$ the set $\{ m - m_\s \mid m \in P_D^\Gamma \}$
generates the semigroup $\Gamma_\s$.
 \end{enumerate}
\end{proposition}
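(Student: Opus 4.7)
The plan is to check the closed-immersion property of the equivariant morphism $\Phi_D$ of (\ref{embedding}) chart-by-chart, using the cover of $T^\Gamma_\Sigma$ by the invariant affine opens $T^{\Gamma_\s}$ for $\s \in \Sigma(d)$. In both directions of the equivalence, $D = D_h$ is base point free (under (i), this is trivial; under (ii), upper convexity of $h$ yields it via Proposition \ref{base-free}), so $\Phi_D$ is well defined, and by the last assertion of Proposition \ref{base-free} the vertices $\{m_\s\}_{\s \in \Sigma(d)}$ of $P_h$ lie in $P_D^\Gamma$. For each top-dimensional $\s$ I denote by $U_{m_\s} \subset \P^{s-1}_k$ the affine open where the homogeneous coordinate $t^{m_\s}$ does not vanish.

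For the implication (ii) $\Rightarrow$ (i), strict upper convexity of $h$ ensures that $\langle \nu , m_{\s'}\rangle < \langle \nu , m_\s\rangle$ for every $\nu$ in the interior of a maximal cone $\s' \ne \s$. Consequently $t^{m_\s - m_{\s'}}$ vanishes on $\orb(\s', \Gamma_{\s'})$, and one checks that $\Phi_D^{-1}(U_{m_\s}) = T^{\Gamma_\s}$. The restriction
\[
\Phi_D|_{T^{\Gamma_\s}} \colon T^{\Gamma_\s} \longrightarrow U_{m_\s} \cong \A^{s-1}_k
\]
is dual to the $k$-algebra homomorphism $k[y_u : u \in P_D^\Gamma \setminus \{m_\s\}] \to k[t^{\Gamma_\s}]$, $y_u \mapsto t^{u - m_\s}$, which is surjective precisely when $\{u - m_\s : u \in P_D^\Gamma\}$ generates $\Gamma_\s$ as a semigroup. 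So each restriction is a closed immersion, and since the source cover $\{T^{\Gamma_\s}\}_{\s \in \Sigma(d)}$ and the target cover $\{U_{m_\s}\}_{\s \in \Sigma(d)}$ are compatible under $\Phi_D$, the morphism $\Phi_D$ itself is a closed immersion.

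For the converse (i) $\Rightarrow$ (ii), very ampleness gives base point freeness, whence $h$ is upper convex by Proposition \ref{base-free}. If $h$ failed to be strictly upper convex, two distinct maximal cones $\s \ne \s'$ would share the same Cartier datum $m_\s = m_{\s'}$. Since $\s^{\perp} = 0$ for $\s \in \Sigma(d)$, the orbit $\orb(\s, \Gamma_\s)$ reduces to a fixed point at which every $t^\gamma$ with $\gamma \in \Gamma_\s \setminus \{0\}$ vanishes; under the coincidence $m_\s = m_{\s'}$, both $\orb(\s, \Gamma_\s)$ and $\orb(\s', \Gamma_{\s'})$ would map under $\Phi_D$ to the unique point of $\P^{s-1}_k$ whose only nonvanishing coordinate is the $m_\s$-th, contradicting injectivity of a closed immersion. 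Knowing strict upper convexity, the same chart analysis as above shows that the closed immersion $\Phi_D|_{T^{\Gamma_\s}}$ corresponds to a surjective algebra homomorphism, which is exactly the semigroup generation property.

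The main obstacle is the precise identification $\Phi_D^{-1}(U_{m_\s}) = T^{\Gamma_\s}$, which separates the affine charts cleanly: one needs strict upper convexity to exclude points of other charts from lying over $U_{m_\s}$, and the fact that $m_\s$ is a vertex of $P_h$ lying inside $P_D^\Gamma$ (granted by Proposition \ref{base-free}) for the affine restriction to be defined at all. Once this compatibility is established, the equivalence reduces to the standard semigroup-theoretic criterion for a morphism of affine toric varieties to be a closed immersion.
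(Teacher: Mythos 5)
Your proof is correct and follows the same chart-by-chart strategy as the paper's, but you handle the necessity of strict upper convexity by a genuinely different sub-argument. The paper observes that if $m_\s=m_{\s'}$ for two adjacent maximal cones $\s,\s'$, then the closure of $\orb(\t,\Gamma)$ with $\t=\s\cap\s'$ is a \emph{complete one-dimensional} toric variety lying inside $T^{\Gamma_\s}\cup T^{\Gamma_{\s'}}$; since $\Phi_D$ maps that whole union into the affine chart $U_{m_\s}$, the restriction to this complete curve must be constant, so $\Phi_D$ cannot be an immersion. You instead note that the two \emph{zero-dimensional} orbits $\orb(\s,\Gamma_\s)$ and $\orb(\s',\Gamma_{\s'})$ are distinct closed points of $T^\Gamma_\Sigma$ (since $\s\neq\s'$ are both top-dimensional) and both are sent by $\Phi_D$ to the coordinate point of $\P^{s-1}_k$ indexed by $m_\s=m_{\s'}$, because every $t^{u_j-m_\s}$ with $u_j\neq m_\s$ vanishes at each fixed point; this contradicts injectivity. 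Your version is slightly more elementary (it avoids invoking the structure of the orbit closure as a complete curve), while the paper's argument is perhaps more robust in that it rules out even set-theoretic injectivity failures via topology rather than explicit coordinates. You are also a bit more careful than the paper in articulating that $\Phi_D^{-1}(U_{m_\s})=T^{\Gamma_\s}$ under strict upper convexity, which is what makes the chart-wise surjectivity criterion legitimately globalize; one should still note, as both treatments leave implicit, that the completeness of $T^\Gamma_\Sigma$ (coming from $|\Sigma|=N_\R$) is what guarantees the image of the locally-closed immersion is actually closed in $\P^{s-1}_k$.
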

\textit{Proof.} Suppose that $h$ is not strictly upper convex.
Then there exists $d$-dimensional cones $\s, \s' \in \Sigma$ such
that $\t = \s \cap \s'$ is of dimension $d-1$ and $ m_{\s} =
m_{\s'}$. This implies that the section defined by $t^{m_\s}$ in
the open set $ U= T^{\Gamma_\s} \cup T^{\Gamma_{\s'}}$ is nowhere
vanishing.

By definition there exists $1 \leq i \leq s$ such that $m_\s =
u_i$.

 The restriction of $\Phi_D$ to $U$
 factors through the
affine open set $\C^{s-1}$, where the $i$-th homogeneous
coordinate does not vanish. It is of the form:
\[
\Phi_{|U} : U \to \C^{s-1} \mbox{, with }  \Phi_{|U} = (t^{u_1 -
m_\s} , \dots, t^{u_{i-1} - m_{\s}},  t^{u_{i+1} - m_{\s}}, \dots,
t^{u_s - m_{\s} }).
\]
 By Lemma \ref{orb-clos} the closure
of the orbit $\orb (\t, \Gamma)$ is a complete one-dimensional toric variety
contained in $U$. The restriction $\Phi_{| \overline{\orb (\t,
\Gamma)} }$ must be constant hence $\Phi$ is not an embedding.
This implies that if $D_h$ is very ample $h$ is strictly upper
convex.

Suppose that $h$ is  strictly upper convex.  If $\s \in \Sigma$ is
a $d$-dimensional cone then $m_\s$ belongs to  $\{u_i\}_{i=1}^s$,
say $m_\s = u_s$. The restriction of $\Phi$ to $T^{\Gamma_\s}$
factors though the affine open set of $\P^{s-1}_k$  where the last
homogeneous coordinate does not vanish. It is described
algebraically by the homomorphism of $k$-algebras:
\[
k [ y_1, \dots, y_{s-1} ] \to k [t^{\Gamma} ], \quad y_i \mapsto
t^{u_i - m_\s}, \, i =1, \dots, s-1.
\]
This maps defines a closed immersion if and only if it is
surjective. This happens if and only if the set of vectors $\{ u_i
- m_\s \}_{1 \leq i \leq s-1 }$ generate the semigroup
$\Gamma_\s$.
 \hfill ${\Box}$

\begin{proposition}   \label{ample}
Suppose that
 $|\Sigma | = N_\R$. The following are equivalent for $h \in \mbox{SF} (N, \Sigma,
 \Gamma)$.
 \begin{enumerate}
\item[{\rm i.}]  $D_h$ is ample

\item[{\rm ii.}] $h$ is strictly upper convex.
\end{enumerate}
\end{proposition}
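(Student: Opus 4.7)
The strategy is to bridge ampleness and strict upper convexity through the definition of ampleness (some positive multiple becomes very ample) and apply Proposition~\ref{very-ample} to that multiple. The scale-invariance of strict upper convexity under multiplication by positive integers will let the bridge run in both directions.

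For (i)$\Rightarrow$(ii), if $D_h$ is ample then $lD_h = D_{lh}$ is very ample for some $l \geq 1$. The function $lh \in \mathrm{SF}(N, \Sigma, \Gamma)$ has Cartier data $\{lm_\s\}_{\s \in \Sigma}$, so by Proposition~\ref{very-ample} $lh$ is strictly upper convex. Since the defining relations of strict upper convexity, namely $h(\nu) + h(\nu') \leq h(\nu + \nu')$ with equality iff $\nu, \nu'$ lie in a common cone of $\Sigma$, together with $h(\nu) = \langle \nu, m_\s\rangle$ iff $\nu \in \s$, are each preserved upon dividing by $l > 0$, $h$ itself is strictly upper convex.

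For (ii)$\Rightarrow$(i) I want to find $l \geq 1$ such that $lh$ satisfies the two conditions of Proposition~\ref{very-ample}. Strict upper convexity of $lh$ is automatic from the hypothesis. For the semigroup-generation condition, fix a $d$-dimensional cone $\s$ and a finite generating set $\gamma_1, \dots, \gamma_r$ of $\Gamma_\s$; it suffices to verify that for $l$ sufficiently large, $lm_\s + \gamma_i \in P_{lD_h}^\Gamma$ for each $i$, i.e., $l(m_\s - m_\t) + \gamma_i \in \Gamma_\t$ for every $\t \in \Sigma$. A reduction using Lemma~\ref{conditions-aff} (the relation $\Gamma_\t = \Gamma_{\s'} + M(\t, \Gamma_\t)$ for $\t \leq \s'$) and $|\Sigma| = N_\R$ (so that every $\t \in \Sigma$ is a face of some $d$-dimensional $\s' \in \Sigma$, using the fan axiom that two cones meet in a common face) cuts this down to checking the condition only for maximal $\s'$.

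The core step, and the main obstacle, is showing that for each maximal $\s' \neq \s$ one has $l(m_\s - m_{\s'}) + \gamma_i \in \Gamma_{\s'}$ once $l$ is large. Strict upper convexity puts $m_\s - m_{\s'}$ in the relative interior of $\check{\s'}$: the interiors of distinct $d$-dimensional cones of a fan are disjoint, so strict convexity gives $\langle \nu, m_\s - m_{\s'}\rangle > 0$ for every $\nu \in \mathrm{int}(\s')$. Combined with the Kempf--Knudsen fact already used in Lemma~\ref{face} via \cite{K-K} Theorem~1.9, which provides $\delta_0 \in \Gamma_{\s'} \cap \mathrm{int}(\check{\s'})$ with $\delta_0 + \check{\s'} \cap M \subset \Gamma_{\s'}$, this forces $l(m_\s - m_{\s'}) + \gamma_i - \delta_0$ to lie in $\check{\s'}$ once $l$ is large enough compared with $\delta_0$ and the fixed $\gamma_i$, so $l(m_\s - m_{\s'}) + \gamma_i \in \Gamma_{\s'}$. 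Only finitely many pairs $(\s', i)$ need to be checked, so a single $l$ works across all maximal cones $\s$ and all generators; this $l$ makes $lD_h$ very ample by Proposition~\ref{very-ample}, hence $D_h$ is ample.
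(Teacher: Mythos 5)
Your forward direction (i)$\Rightarrow$(ii) is correct and matches the paper. Your reverse direction also follows the paper's overall strategy of establishing very ampleness of $lD_h$ for $l \gg 0$ via Proposition~\ref{very-ample}, and the reduction to checking $l(m_\s - m_{\s'}) + \g_i \in \Gamma_{\s'}$ for maximal $\s'$ and generators $\g_i$ of $\Gamma_\s$ is sound. The gap is in the core step.

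You assert that strict upper convexity puts $m_\s - m_{\s'}$ in the \emph{relative interior} of $\check{\s'}$, inferring this from $\langle \nu, m_\s - m_{\s'}\rangle > 0$ for all $\nu \in \mathrm{int}(\s')$. That inference is wrong: positivity on the interior of $\s'$ only gives $m_\s - m_{\s'} \in \check{\s'}$. In fact, if $\t := \s \cap \s'$ has positive dimension, then the Cartier data $m_\s$ and $m_{\s'}$ both restrict to $h$ on $\t$, so $\langle \nu, m_\s - m_{\s'}\rangle = 0$ for $\nu \in \t$; thus $m_\s - m_{\s'}$ lies in the proper face $\check{\s'} \cap \t^\perp$, not the interior. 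Your subsequent argument genuinely needs interiority: you want $l(m_\s - m_{\s'}) + \g_i - \d_0 \in \check{\s'}$ for $l \gg 0$, but for $\nu \in \t \setminus \{0\}$ the pairing with this vector is $\langle \nu, \g_i\rangle - \langle \nu, \d_0\rangle$, which does not improve as $l$ grows and can well be negative since $\langle \nu, \d_0\rangle > 0$. A two-dimensional example: $N = \Z^2$, $\s = \mathrm{cone}(e_1,e_2)$, $\s' = \mathrm{cone}(e_1,-e_2)$, $\Gamma_\s = \N e_1^* + \N e_2^*$, $\g_i = e_2^*$, $m_\s - m_{\s'} = -a e_2^*$ with $a>0$; then for any $\d_0 \in \mathrm{int}(\check{\s'})$, the vector $l(m_\s - m_{\s'}) + \g_i - \d_0$ has $\langle e_1, \cdot\rangle = -\langle e_1, \d_0\rangle < 0$ for all $l$, so it is never in $\check{\s'}$, even though $l(m_\s - m_{\s'}) + \g_i = (1-la)e_2^*$ does lie in $\Gamma_{\s'}$ once $la \geq 1$. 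The paper avoids this by working directly with the semigroup identity $\Gamma_\t = \Gamma_{\s'} + \Z_{\geq 0}(m_{\s'} - m_\s)$ from Lemma~\ref{conditions-aff}: one writes $\g_i = \g' + p(m_{\s'} - m_\s)$ with $\g' \in \Gamma_{\s'}$ and $p \geq 0$, so that for $l \geq p$ one has $l(m_\s - m_{\s'}) + \g_i = (l-p)(m_\s - m_{\s'}) + \g' \in \Gamma_{\s'}$. You should replace the interiority/Kempf--Khovanskii argument with this semigroup decomposition.
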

\textit{Proof.}  If $D$ is ample then $l D$ is very ample for $l
\gg 0$. Since $l D = D_{l h}$ it follows that $h$ is strictly
upper convex if $l h$ is and  the assertion holds by Proposition
\ref{very-ample}.

Conversely suppose that $h$ is strictly upper convex. We prove
that $l D_h$ is very ample for $l \gg  0$. By Proposition
\ref{very-ample} it is sufficient to prove that there exists an
integer $l \gg 0$ such that for each $d$-dimensional cone $\s \in
\Sigma$ the semigroup $\Gamma_\s$ is generated by $\{ m - l m_\s
\mid m \in P^\Gamma_{D_{lh}} \}$.

If $\s' \in \Sigma$, $\dim \s' = d$,  $\t = \s' \cap \s$   we have
that $\Gamma_\t = \Gamma_\s + \Z_{\geq 0} (-u)$
for any $u \in \Gamma_\s$ in the relative interior of the cone
$\t^\perp \cap \check{\s}$ (see Lemma \ref{conditions-aff}). For
instance we take $u = m _{\s'} - m_{\s}$. We obtain similarly that
$
\Gamma_\t = \Gamma_{\s'} + \Z_{\geq 0} (u).
$

If $\g \in \Gamma_\s$ then $\g$ belongs to $\Gamma_\t$ and there
exists $\g' \in \Gamma_{\s'}$ and an integer $p \geq 0$  such that
$ \g = \g'   + p u$.  If $l \geq p $ we obtain:
\begin{equation} \label{g}
l m_{\s'} + \g' + (l - p) (m_\s - m_{\s'}) =  l m_\s + \g.
\end{equation}
If $l$ is big enough, a formula of the form (\ref{g}) holds for
any $\g$ in a finite set $G_{\s}$ of generators of $\Gamma_\s$
(where $p$ and $\g'$  vary with $\g$) and for any cone $\s' \in
\Sigma(d)$. Since $\g'$ and $m_\s - m_{\s'}$ belong to
$\Gamma_{\s'}$ this implies $t^{ l m_\s + \g }$ defines a section
in $H^0 ( {T^{\Gamma_{\s'}}}, \mathcal{O}_{T_\Sigma^\Gamma}
(D_{lh}) ) $ (see (\ref{section})) for  any cone $\s' \in \Sigma
(d)$. We deduce that for any $\g \in G_\s$ the vector $l m_\s + \g
$ belongs to the set $P^\Gamma_{D_{l h}}$ and $t^{l m_\s + \g}$
defines a global section of $\mathcal{O}_{T_\Sigma^\Gamma}
(D_{lh}) $. \hfill ${\Box}$

\begin{remark}
 Let $\mathcal{A} = \{ u_1, \dots, u_l \} $ be a subset of a
lattice $M$ such that $\Z \mathcal{A} = M$, i.e., $\mathcal{A}$
spans $M$ as a lattice. Gel{\cprime}fand, Kapranov, and Zelevinsky
\cite{GKZ} define a projective
 toric variety $X_\mathcal{A}$ as the closure of the image of the
 map
 \[
\varphi_{\mathcal{A}} =(t^{u_1} \colon \dots \colon t^{u_l})
\colon T^M \to \P^{l-1}_k.
\]
\end{remark}
Let us explain how their definition fits with our notion of
projective toric variety.  Let $P$ be the convex hull of
$\mathcal{A}$ in $M_\R$ and $\Sigma$ the dual fan of $P$. Each $\s
\in \Sigma$ of maximal dimension determines a vertex $m_\s$ of
$P$, which is necessarily an element of $\mathcal{A}$. We
associate to $\s$ the semigroup $\Gamma_\s : = \sum \Z_{\geq 0}
(u_i - m_\s)$. If $\t \leq \s$ we define $\Gamma_\t$ by
(\ref{Gamma_tau}). The set $\Gamma := \{ \Gamma_\theta \mid \theta
\in \Sigma \}$ is well-defined and the triple $(N, \Sigma,
\Gamma)$ defines a toric variety $T^{\Gamma}_\Sigma$ (the argument
is the same as the one used in the proof of Proposition
\ref{blowing}). The support function $h$ of $P$ belongs to
$\mbox{SF} (N, \Sigma, \Gamma)$ and is strictly upper convex. If
$D = D_h$ we deduce from the definitions that $\mathcal{A} 
\subset P_D^\Gamma
$. By Proposition \ref{very-ample} the Cartier divisor
$D$ is very ample, and the morphism (\ref{embedding}) is an
equivariant embedding of $T^\Gamma_\Sigma$ in the projective space. 
 It follows that $ X_{\mathcal{A}}$  and $T^\Gamma_\Sigma$ 
 are isomorphic toric varieties. 
 Notice that 
 the embedding defined by  (\ref{embedding}) may be \textit{degenerate}, that is, 
 the image may lie in a proper linear subspace.

\begin{remark}
If $F = \sum_{i=1}^s c_i t^{u_i} \in k [t^M]$ is a polynomial with
$c_1 \dots c_s \ne 0$, then $F$ defines a global section of
$\mathcal{O}_{T^\Gamma_\Sigma} (D)$ such that the closure of $\{ F
=0 \} \cap T^M$ in $T^\Gamma_\Sigma$ does not meet any
zero-dimensional orbit of $T^\Gamma_\Sigma$.
\end{remark}

\begin{proposition} \label{proj}
 Suppose that
 $|\Sigma | = N_\R$. Then the toric variety $T_\Sigma^\Gamma$ is projective if and only if its normalization
 $T_\Sigma$ is 
 projective. 
\end{proposition}
\textit{Proof.}  
Suppose that $T_\Sigma$ is projective. 
Then there exists a strictly upper convex function $h \in SF(N, \Sigma)$.
By definition, there exists an integer $k_0  \geq 1$ such that 
$k_0 h \in SF(N, \Sigma, \Gamma)$. It follows that  $T_\Sigma^\Gamma$ is projective by 
Proposition \ref{ample}. 

If $T_\Sigma^\Gamma$ is projective there exists a strictly upper convex function $h \in SF(N, \Sigma, \Gamma)$. 
Since $ SF(N, \Sigma, \Gamma) \subset SF (N, \Sigma)$ it follows that $T_\Sigma$ is projective.  
\hfill ${\Box}$

\begin{example}        \label{np}
We give an example of complete non-normal toric variety $T_\Sigma^\Gamma$, which is non projective.
We recall first the Example 6.1.17 of \cite{CLS} of a complete smooth toric variety $T_\Sigma$, which is  
non-projective. 
The maximal faces of the fan $\Xi$ associated to $(\P^1)^3$ are the eight orthants of $\R^3$. 
We denote the canonical basis of $\Z^3$ by $e, f, g$ and we set $N = \Z^3$. In term of this basis 
we consider the vectors 
$ a = (2,1,1)$, $ b = (1, 2,1)$, $c= (1,1, 2)$
and $d= (1,1,1)$.  
We define from $\Xi$ a complete fan $\Sigma$,  by subdividing the first orthant
$\R^3_{\geq 0} = \R_{\geq 0} e + \R_{\geq 0} f+ \R_{\geq 0}g$,  
adding the rays defined by $a, b, c, d$ in the way indicated in the Figure \ref{complex-figure}.

\setlength{\unitlength}{0.4 mm}
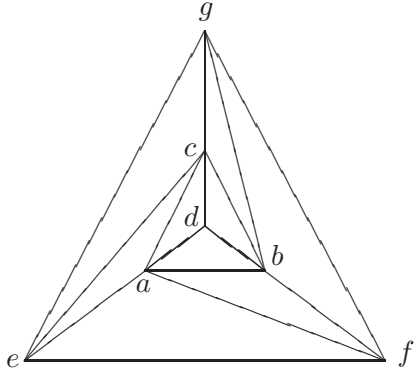
\begin{figure}
\begin{center}
\begin{picture}(120,120)(0,0)
\linethickness{0.1mm}

\drawline(0,0)(120,0)
\drawline(0,0)(40,30)
\drawline(0,0)(60,70)
\drawline(0,0)(60, 110)

\drawline(60, 110)(60,70)
\drawline(60, 110)(80,30)
\drawline(60, 110)(120,0)

\drawline(120,0)(80,30)
\drawline(120,0)(40,30)

\drawline(60,45)(40,30)
\drawline(60,45)(80,30)
\drawline(60,45)(60,70)

\drawline(60,45)(40,30)
\drawline(60,45)(80,30)
\drawline(60,45)(60,70)

\drawline(40,30)(80,30)
\drawline(40,30)(60,70)
\drawline(80,30)(60,70)

\jput(-6,-2){{$e$}}
\jput(37,23){{$a$}}
\jput(82,32){{$b$}}
\jput(53,45){{$d$}}
\jput(53,68){{$c$}}
\jput(58,115){{$g$}}
\jput(124,0){{$f$}}

\end{picture}
\end{center}

    \caption[]{The cones of $\Sigma$ subdividing the first orthant in Example \ref{np}}\label{complex-figure}
\end{figure}

Let us denote by $a^*, c^*, d^*$  (resp. $a', c', e'$) the dual basis of 
$a, c, d$ (resp. of $a, c, e$). In term of these basis of $M$ we introduce the following non-saturated 
semigroups:
\[
 \begin{array}{lclclclclcl}
 \Gamma_{acd} & := & a^* \Z_{\geq 0}  & + &   c^* \Z_{\geq 0} &  + & 
 (a^* + d^*)  \Z_{\geq 0}  &  + &  (c^* + d^*)  \Z_{\geq 0}  &  + &    2 d^* \Z_{\geq 0}, 
  \\
 \Gamma_{ac}  & := & a^* \Z_{\geq 0} & + &   c^* \Z_{\geq 0}  & + &  
 (a^* + d^*)  \Z_{\geq 0}    & + &  (c^* + d^*)  \Z_{\geq 0}   & + &   2 d^* \Z, 
 \\
 \Gamma_{ace} & := & a' \Z_{\geq 0} & + &   c' \Z_{\geq 0}  &+&  
 (a' + e')  \Z_{\geq 0}    & + &  (c' + e')  \Z_{\geq 0}  &  + &     2 e' \Z_{\geq 0}.
 \end{array}
\]
Let us denote by $\s_{acd}$ the cone  $\R_{\geq 0} a + \R_{\geq 0} c + \R_{\geq 0} d$. 
We use a similar notation to define the cones $\s_{ace}$ and $\s_{ac}$. 
Let us define a semigroup $\Gamma_\s$ associated to each cone $\s \in \Sigma$ by: 
\[
\Gamma_\s:= \left\{ 
 \begin{array}{lcccl}
         \Gamma_{acd} & \mbox{ if } & \s & = &       \s_{acd}
         \\
          \Gamma_{ace} & \mbox{ if } & \s & =  &   \s_{ace}
                    \\
           \Gamma_{ac} & \mbox{ if } & \s & = &  \s_{ac}
                     \\
          \check{\s}\cap M    & & & \mbox{otherwise}. &     
 \end{array}
\right.
\]
Then the triple $(N, \Sigma, \Gamma)$ satisfies the conditions in the definition \ref{def-general-toric}: 
It is immediate that $\Z \Gamma_\s = M $ and $\R_{\geq 0} \Gamma_\s = \check{\s}$, 
for any $\s \in \Sigma$. We check the compatibility conditions among those semigroups defining different charts. 
First, if $\t \ne \s_{ac}$ is a proper face of $\s_{acd}$ then we get that  
$\Gamma_{acd} + M (\t, \Gamma_{acd} ) = \check{\t} \cap M$  is a regular semigroup. 
The same assertion holds replacing $acd$ by $ace$. In these cases the compatibility conditions 
are the same as in the normal case. It remains to check what happens when $\t = \s_{ac}$ is the common face 
of $\s_{acd}$ and $\s_{ace}$. One gets that 
$\Gamma_{acd} + M(\t, \Gamma_{acd}) = \Gamma_{ac}$
while 
$\Gamma_{ace} + M(\t, \Gamma_{ace})$ is the semigroup $\Gamma_{ac}'$ generated by
$a', c', a' + e', c' + e', \pm 2 e' $. Since $c' = c^*$, $a' = a^* + d^*$ and $e' = - d^*$ 
it follows that $\Gamma_{ac} = \Gamma_{ac}'$,  hence all the compatibility conditions hold. 
By Proposition \ref{proj} we deduce that the complete toric variety $T_\Sigma^\Gamma$
is non-normal and non-projective.

\end{example}

\medskip

\begin{center}
{\textbf{Part II: Local uniformization of maximal rank monomial valuations on toric varieties by Semple-Nash modifications}}
\end{center}

\medskip

\medskip
In this Part we prove that a maximal rank monomial valuation dominating a
point of a toric variety admits a canonical local uniformization
by a finite number of iterated blowing ups of logarithmic jacobian
ideals. Recall that, as shown below, if $k$ is an algebraically
closed field of characteristic zero the blowing up of the
logarithmic jacobian ideal of an affine toric variety $T^\Gamma$
coincides with the Semple-Nash modification. This fact, originally
due to G\'erard Gonzalez Sprinberg in the normal case (\cite{GS-ENS}), is
our starting point.\par\medskip

The sequence of logarithmic jacobian blowing-ups of a toric
variety $T^{\Gamma^{(1)}}_{\Sigma^{(0)}}$ is a sequence of
toric varieties $T_{\Sigma^{(i)}}^{\Gamma^{(i+1)}}$ defined
by a sequence $\Sigma^{(i)}$ of refinements (or
subdivisions) of $\Sigma$ with attached families
$\Gamma^{(i+1)}$ of semigroups.
The center of a monomial valuation of maximal rank $d$ on the
toric variety  $T_{\Sigma^{(i)}}^{\Gamma^{(i+1)}}$ is a zero
dimensional orbit corresponding to a $d$-dimensional cone
$\t^{(i)}$ of the fan   $\Sigma^{(i)}$. The main result of this
part shows that the affine toric variety
$T_{\t^{(i)}}^{\Gamma^{(i+1)}}$ becomes smooth after
finitely many iterations of logarithmic jacobian blowing-ups (see
Theorem \ref{lu}).

In most proofs of
resolution the strategy is to attach to points an invariant which
takes its minimal value only for regular points  and then show
that it can be made to decrease by successive blowing-ups. Our
strategy is different: we show that the very nature of the
blowing-up of the logarithmic jacobian ideals forces the cones
distinguished by the valuation
in the successive refinements $\Sigma^{(i)}$ of $\Sigma$ to
stabilize for $i$ large enough, meaning that they are not
subdivided in the $\Sigma^{(j)}$ for $j\geq i$. If one can
stabilize the cones of maximal dimension the logarithmic jacobian
blowing-ups are finite morphisms from then on, and it is easy to
show that they resolve in finitely many steps (see Proposition
\ref{stable}). This stabilization is not measured by the constancy
of some local invariant. The basic idea is to show stabilization
by extending it from lower-dimensional cones to higher-dimensional
ones, so that if one really insists on having an invariant, it
should be the minimal codimension of  stable faces of the
cone $\t^{(i)}$ picked by the valuation; it is at most $d-1$
since edges are stable, and if it is zero, we are essentially
done. \par

Here is a quick description of the structure of the proof: first
we study the problem with respect to the monomial valuation
associated to a vector $\nu\in \s\cap N$, where $\s\in\Sigma(d)$.
For each $j$ such a vector determines a unique cone
$\theta^{(j)}\in\Sigma^{(j)}$ containing $\nu$ in its
relative interior, and the first observation is that this sequence
$(\theta^{(j)})_{j\geq 0}$ stabilizes for $j\geq j_1$ say; the
limit $\theta^{(\infty)}=\theta^{(j_1)}$ is by definition a stable
cone of $\Sigma^{(j_1)}$. This implies that the chart
$T^{\Gamma^{(j)}}_{\theta^{(\infty)}}$ is non-singular for $j\gg
0$ (see Propositions \ref{stable} and \ref{prev}).

One of the difficulties is that the logarithmic jacobian
blowing-up of the variety does not induce the logarithmic jacobian
blowing-up of its lower-dimensional orbit closures. A key point in
the proof is that given a stable cone $\eta$, a nested sequence of
cones $\zeta^{(j)}\in\Sigma^{(j)}$ containing $\eta$ as a
codimension one face necessarily stabilizes (Proposition
\ref{uno}). This uses the fact that for every one dimensional
orbit closure associated to a stable cone of codimension one, the
effect of the ambient blowing-up is very similar that of its
logarithmic jacobian blowing-up (see Claim \ref{oneorbit}). A
monomial valuation of maximal rank defines a nested sequence of
$d$-dimensional cones $\t^{(j)} \in \Sigma^{(j)}$. We show first
that this sequence  contains a stable cone $0 \ne \eta \leq
\t^{(j)}$ for $j \gg 0$. If $\eta \ne  \t^{(j)}$ we analyze the
the blowing up of the logarithmic jacobian ideal on the orbit
closure associated to $\eta$ on the chart picked up by the
valuation and we prove that after finitely many iterations we
obtain a stable cone $\eta \leq \theta \leq  \t^{(j)}$ of smaller
codimension.

In Sections \ref{EI} and \ref{EI2} we
give an interpretation of the main result  in terms of
the Zariski-Riemann space of a fan, introduced by Ewald and Ishida (\cite{Ewald-Ishida}).

The recent paper \cite{ALPPT} suggests that it would be interesting to develop
 an approach from a computational viewpoint to the iteration of Semple-Nash modification.

 \section{The Semple-Nash modification: preliminaries}

 In \cite{S}, Semple introduced the Semple-Nash modification
 of an algebraic variety and asked whether a finite number of iterations
 would resolve the singularities of the variety.
 The same question was apparently rediscovered by Chevalley and Nash in the 1960's,
 and studied notably by Nobile (see  \cite{No}), Gonzalez-Sprinberg (see  \cite{GS} and \cite{GS-ENS}),
  Hironaka (see  \cite{H}),  and Spivakovsky (see  \cite{Spivakovsky}).
  The best consequence so far of all this work is the Theorem, due to Spivakovsky,
  stating that by iterating the operation consisting of the Semple-Nash modification followed
  by normalization one eventually resolves singularities of surfaces over an algebraically
  closed field of characteristic zero.\par
Let $X$ be a reduced algebraic variety or analytic space,  which
we may assume of pure dimension $d$ for simplicity. Whenever we
speak of the Semple-Nash modification, we assume that we are
working over an algebraically closed field $k$ of characteristic
zero. Consider the Grassmanian $g\colon \hbox{\rm
Grass}_d\Omega^1_X\to X$; it is a proper algebraic map, which has
the property that its fiber over a point of $x$ is the Grassmanian
of $d$-dimensional subspaces of the Zariski tangent space
$E_{X,x}$. The map $g$ is characterized by the fact that
$g^*\Omega^1_X$ has a locally free quotient of rank $d$ and $g$
factorizes in a unique manner every map to $X$ with this property.
Let $X^o$ denote the non-singular part of $X$, which is
$d$-dimensional and dense in $X$ by our assumptions. Since the
restriction $\Omega^1_X\vert X^o$ is locally free the map $g$ has
an algebraic section over $X^o$ and the Semple-Nash modification
is defined as the closure $NX$ of the image of this section,
endowed with the natural projection $n_X\colon NX\to X$ induced by
$g$. The map $n_X$ is proper and is an isomorphism over $X^o$; it
is a modification. Like the Grassmanian of $\Omega^1_X$, it is
defined up to a unique $X$-isomorphism.\par A local description
can be given for a chart $X\vert U$ of $X$ embedded in affine
space ${\mathbf A}^N(k)$ by taking the closure in $(X\vert U)\times {\mathbf G}(N,d)$ of the graph of the
Gauss map $\gamma\colon (X\vert U)^o\to {\mathbf G}(N,d)$ sending each non-singular point to the class of its tangent space in the Grassmanian of $d$-dimensional vector subspaces in ${\mathbf A}^N(k)$. For any
point $x\in X$ the fiber $n_X^{-1}(x)$ is the subset of ${\mathbf
G}(N,d)$ consisting of limit positions at $x\in X$ of tangent
spaces to $X$ along sequences of non-singular points tending to
$x$. In this guise, the Semple-Nash modification appears in a complex-analytic
framework in the paper \cite{Wh} of Hassler Whitney in connection with equisingularity problems.

 \begin{proposition}\label{iso}{\rm (Nobile), see \cite{No} and \cite{T-hunting}}.
 Let $X$ be a reduced equidimensional space; if the map $$n_X\colon NX\to X$$ is an isomorphism,
 the space $X$ is non-singular.
\end{proposition}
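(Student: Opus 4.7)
The strategy is to use the universal property of the Grassmannian to turn the hypothesis into the existence of a locally free rank-$d$ quotient of $\Omega^1_X$, to exploit this quotient to produce, locally near any point $x\in X$, a morphism to $\A^d$ which is generically étale, and finally to invoke the characteristic zero assumption to upgrade generic étaleness to étaleness at $x$, forcing $X$ to be smooth at $x$.

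Since $n_X\colon NX\to X$ is an isomorphism, composing its inverse with the inclusion $NX\hookrightarrow\mathrm{Grass}_d\Omega^1_X$ yields a section $s\colon X\to\mathrm{Grass}_d\Omega^1_X$ of $g$. By the universal property of $g$ recalled above, $s$ corresponds to a locally free rank-$d$ quotient $\pi\colon\Omega^1_X\twoheadrightarrow Q$; because $s|_{X^o}$ is the tautological section coming from the local freeness of $\Omega^1_{X^o}$, the restriction $\pi|_{X^o}$ is an isomorphism. Fix $x\in X$ and choose $f_1,\dots,f_d\in\mathcal{O}_{X,x}$ so that $\pi(df_1),\dots,\pi(df_d)$ form a basis of $Q$ on an affine neighbourhood $U$ of $x$; put $\varphi=(f_1,\dots,f_d)\colon U\to\A^d$. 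Since $\pi|_{X^o\cap U}$ is an isomorphism, $df_1,\dots,df_d$ is a basis of $\Omega^1_{X^o}$ on $X^o\cap U$, so the pullback $\varphi^*\Omega^1_{\A^d}\to\Omega^1_X$ is an isomorphism there; hence $\varphi$ is étale at every smooth point of $U$, and in particular is dominant, of relative dimension zero, and generically étale.

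The core task, and the main obstacle, is to upgrade this to étaleness at $x$ itself: once that is known, $\mathcal{O}_{X,x}$ is étale over the regular local ring $\mathcal{O}_{\A^d,\varphi(x)}$ and hence itself regular, so $X$ is smooth at $x$. Equivalently, one must show that the kernel $K=\ker\pi$ vanishes at $x$. Here the characteristic zero hypothesis is essential. Dualizing $\pi$ exhibits $Q^\vee$ as a locally free rank-$d$ subsheaf of the tangent sheaf $T_X$, equal to $T_X$ over $X^o$ and generated near $x$ by $d$ derivations $D_1,\dots,D_d$ of $\mathcal{O}_{X,x}$ which are linearly independent at every point of $U$. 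A Frobenius/curve-selection style argument, valid in characteristic zero, then shows that every Zariski tangent vector $\xi\in T_{X,x}$ is the velocity at the origin of a formal arc on $X$ through $x$ whose generic point lies in $X^o$; since at every nearby smooth point the velocity lies in $Q^\vee$, passing to the limit (legitimate in characteristic zero) forces $\xi\in(Q^\vee)_x$. Hence $T_{X,x}=(Q^\vee)_x$ has dimension $d$, so $\mathcal{O}_{X,x}$ is regular and $X$ is non singular at $x$; since $x$ was arbitrary, $X$ is smooth.
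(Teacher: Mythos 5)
Your opening is correct and matches the paper: the isomorphism $n_X$ gives a section of the Grassmannian and hence a locally free rank-$d$ quotient $\pi\colon\Omega^1_X\twoheadrightarrow Q$, and you correctly reduce the problem to showing that at a closed point $x$ the Zariski tangent space $T_{X,x}$ (equivalently $\Omega^1_{X,x}\otimes k$) has dimension $d$, i.e.\ that $\varphi$ is \'etale at $x$. You also correctly sense that characteristic zero must be the ingredient that closes the gap. But the argument you offer for the key step does not work.

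The claim that ``every Zariski tangent vector $\xi\in T_{X,x}$ is the velocity at the origin of a formal arc on $X$ through $x$'' is false at singular points. Take the cuspidal cubic $y^2=x^3$ and $x$ the origin. Its Zariski tangent space is two-dimensional, but any formal arc $(a(t),b(t))$ through the origin must have $\mathrm{ord}\,a\geq 2$ and $\mathrm{ord}\,b\geq 3$ (from $b^2=a^3$), so the velocity of \emph{every} arc vanishes; in particular no nonzero tangent vector arises this way. More generally, velocities of arcs land in the tangent cone, which at a singular point is a proper subvariety of $T_{X,x}$, and even there the velocity of a nontrivial arc can be zero. The subsequent ``passing to the limit'' step is also unjustified: the image of $(Q^\vee)_x$ in $T_{X,x}$ sits inside the space of limiting tangent directions, and bounding the Zariski tangent space by such limits is exactly the hard content of the proposition, not something one gets for free. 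So as written there is a genuine gap at the one place where characteristic zero must do real work.

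For comparison, the paper's proof sidesteps tangent-space estimates entirely: from the surjection $\Omega^1_{X,x}\to\mathcal O_{X,x}^d$ it extracts a single derivation $D$ with $Dh=1$ for some $h$, passes to the completion, and integrates the vector field via $\exp(-hD)$ (this is where characteristic zero is used, to divide by factorials) to split off a formal power series variable: $\hat{\mathcal O}_{X,x}\simeq\mathcal O_1[[h]]$ with $\mathcal O_1=\hat{\mathcal O}_{X,x}/(h)$ inheriting the same hypothesis in dimension $d-1$. Induction then terminates at dimension zero, where a reduced complete equicharacteristic local ring is the base field. If you want to salvage your route, you would need a correct mechanism to conclude $\dim_k T_{X,x}=d$; integrating the $d$ commuting (after suitable normalization) vector fields $D_1,\dots,D_d$ one at a time as in the paper is essentially that mechanism, so you would end up reproducing the paper's argument rather than giving a genuinely different one.
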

For the convenience of the reader, we sketch the proof found in
\cite{T-hunting}:\par\noindent If the map $n_X$ is an isomorphism,
the sheaf $\Omega^1_X$ has a locally free quotient of rank $d$.
The problem is local, so it is enough to prove that the existence
of a surjective map $\phi\colon \Omega^1_{X,x}\to {\mathcal
O}^d_{X,x}$ implies, in characteristic zero,
 that ${\mathcal O}_{X,x}$
 is regular.
Passing to the completion and tensoring $\Omega^1_{X,x}$ by
$\hat{\mathcal O}_{X,x}$ we may assume that ${\mathcal O}_{X,x}$
is complete. We consider the linear map $e\colon{\mathcal
O}^d_{X,x}\to{\mathcal O}_{X,x} $ sending the first basis vector
to $1$ and the others to $0$. The composition of $e$ with the map
$\phi$ gives a surjective map, so that there has to be an element
$h\in {\mathcal O}_{X,x}$ such that the image of $dh$ in
${\mathcal O}_{X,x}$ by $e\circ\phi$ is equal to $1$, and then the
$k$-derivation $D\colon{\mathcal O}_{X,x}\to {\mathcal O}_{X,x}$
corresponding to $e\circ\phi$ is such that $Dh=1$. In
characteristic zero one can formally integrate this non vanishing
vector field using the formal expansion of $\mathrm{exp}(-hD)$ to
get an isomorphism ${\mathcal O}_{X,x}\simeq {\mathcal O}_1[[h]]$
where $ {\mathcal O}_1\simeq {\mathcal O}_{X,x}/(h)$. By
construction $ {\mathcal O}_1$ satisfies the same assumptions as
${\mathcal O}_{X,x}$ in one less dimension. By induction we are
reduced to dimension zero, but a reduced zero dimensional complete
equicharacteristic local ring is $k$ in our case. We refer to
\cite{T-hunting} for details, and to \cite{No} for the original
proof.
\begin{remark} We will see below in
Section \ref{sheaf}, Proposition \ref{jacoiso},  the
characteristic-free version of this statement, which is that if
the blowing-up of the logarithmic jacobian ideal is an
isomorphism, the toric variety is smooth. Note that the
Semple-Nash modification is defined in any characteristic but its
being an isomorphism does not imply regularity in positive
characteristic; it is the case for $y^p-x^q=0$ with $(p,q)=1$ in
characteristic $p$. See \cite{No}.\end{remark}
  \section{The Semple-Nash modification in the toric case}
 The following is an extension to the case of not necessarily normal toric varieties  of a result of
Gonzalez-Sprinberg (\cite{GS-ENS}; a summary of this work appeared in  \cite{GS-CRAS})
which was revisited by Lejeune-Jalabert and Reguera in the appendix to \cite{LJ-R}.
 \par\noindent
 Let $X$ be an affine toric variety over an algebraically closed field $k$. Using the notations of Section \ref{affine} we write its ring $$R=k[U_1,\ldots ,U_{r}]/P,$$ where $P$ is a prime binomial ideal $(U^{m^\ell}-U^{n^\ell})_{\ell\in\mathbf L}$ of the polynomial ring $k[U_1,\ldots ,U_{r}]$. Let $d$ be the dimension of $X$ and  denote by  ${\mathcal L}\subset \Z^r$ the lattice generated by the differences $(m^\ell -n^\ell)_{\ell\in \mathbf L}$; by \cite{E-S}, it is a direct factor of $ \Z^r$ since $X$ is irreducible and $k$ is algebraically closed. Setting $c=r-d$, we may identify $\mathbf L$ with $\{1,\ldots ,L\} $ with $L=\vert \mathbf L\vert$ in such a way that the lattice generated by $(m^1-n^1,\ldots ,m^c-n^c)$ has rank $c$.
 The quotient $\Z^r/\mathcal L$ is isomorphic to $\Z^d$ and we have an exact sequence
\begin{equation}\label{star} 0\to {\mathcal L}\stackrel{\psi}\rightarrow \Z^r\to \Z^d\to 0.\end{equation}
Our affine toric variety $X$ is $\hbox{\rm Spec} k[t^\Gamma]$,
where $\Gamma$ is the semigroup generated in $\Z^d$ by the images
$\gamma_1,\ldots ,\gamma_r$ of the basis vectors of $\Z^r$.
The \textit{logarithmic jacobian ideal} of $X$ is the ideal
of $R = k[t^\Gamma]$ generated by the images of the products
$U_{i_1}\dots U_{i_d}$ such that $\hbox{\rm
Det}(\gamma_{i_1},\ldots ,\gamma_{i_d})\neq 0$.
\par

\begin{proposition}\label{Nashbl}{\rm (Generalizing \cite{GS-ENS} \cite{GS-CRAS} and  \cite{LJ-R})}
Let $X$ be an affine toric variety over an algebraically
closed field of characteristic zero. The Semple-Nash modification
of $X$ is isomorphic to the blowing-up its logarithmic jacobian
ideal.
\end{proposition}
\begin{proof} Keeping the notations just introduced, a straightforward computation using logarithmic differentials shows that the jacobian determinant
$J_{K,\mathbf L'}$ of rank
$c=r-d$ of the generators $(U^{m^\ell}-U^{n^\ell})_{\ell\in\{1,\ldots , L\}}$ of our prime binomial ideal $ P\subset k[U_1,\ldots
,U_r]$, associated to a sequence $K=(k_1,\ldots , k_c)$ of distinct elements of $\{1,\ldots, r\}$ and a subset $\mathbf L'\subseteq \{1,\ldots , L\}$ of
cardinality $c$, satisfies the congruence
$$U_{k_1}\ldots U_{k_c}.J_{K,\mathbf L'}\equiv
\big(\prod_{\ell\in \mathbf L'}U^{m^\ell}\big)\hbox{\rm Det}_{K,\mathbf L'}\big( (\langle m-n\rangle )\big) \ \
\hbox{\rm mod.} P,$$ where $\big(\langle m-n\rangle\big)$ is the matrix of the
vectors $(m^\ell-n^\ell)_{\ell\in \{1,\ldots , L\}}$,  and $\hbox{\rm Det}_{K,\mathbf L'}$ indicates the minor
in question. By Lemma 6.3 of \cite{T-valuations}, the rank of the image in $k^{r\times L}$ of the  matrix $\big(\langle m-n\rangle\big)$ is equal to $c$.\par
  By (\cite{No}, proof of Th.1) the Nash modification of $X$ is isomorphic to the blowing up in $X$ of the ideal generated as $K=(k_1,\ldots , k_c)$ runs through the sets of $c$ distinct elements of $(1,\ldots ,r)$ by the elements $J_{K,\mathbf L_0}$ satisfying the congruences
$$U_{k_1}\ldots U_{k_c}.J_{K,\mathbf L_0}\equiv
\big(\prod_{\ell\in\mathbf L_0}U^{m^\ell}\big)\hbox{\rm Det}_{K,\mathbf L_0}\big( (\langle m-n\rangle )\big) \ \
\hbox{\rm mod.} P,$$ where $\mathbf L_0=(1,\ldots ,c)$ is, after renumbering of $\{1,\ldots, L\}$, a subset such that these jacobian determinants are not all zero; such subsets exist since the $J_{K,\mathbf L'}$ are not all zero. Remark the necessity that $J_{K,\mathbf L_0}=0$ whenever the determinant  on the right side is zero.\par
Remark also that by \cite{CCD}, we may not suppose that the first $c$ binomials define a complete intersection.\par\medskip
Now for each $K$ let us multiply both sides by  $U_{i_1}\ldots U_{i_d}$, where $I=(i_1,\ldots ,i_d)=\{1,\ldots ,r\}\setminus K$. We obtain for each $K$ the equality:\par
\begin{equation}\label{(**)}
U_1\ldots U_{r}.J_{K,\mathbf L_0}\equiv U_{i_1}\dots
U_{i_d}
\big(\prod_{\ell\in \mathbf L_0}U^{m^\ell}\big)\hbox{\rm Det}_{K,\mathbf L_0}\big( (\langle m-n\rangle )\big) \ \
\hbox{\rm mod.} P.
\end{equation}

Taking exterior powers for the map $\psi$ in the sequence (\ref{star}) gives an injection
$$0\to\stackrel{r-d}\Lambda {\mathcal L}\ \stackrel{\stackrel{r-d}\Lambda \psi}\longrightarrow \ \ \stackrel{r-d}\Lambda \Z^r$$
whose image is a primitive vector in $\stackrel{r-d}\Lambda \Z^r$ since it is a direct factor.\par\noindent Let $\L_0\subset \L$ be the lattice generated by the differences $(m^1-n^1,\ldots ,m^c-n^c)$, that is, corresponding to the first $c$ binomial equations. The image of its $(r-d)$-th exterior power is a non-zero multiple of the primitive vector $\stackrel{r-d}\Lambda {\mathcal L}$; all the $c\times c$  minors of the matrix $\big(\langle m-n\rangle\big)$ involving vectors $m^\ell-n^\ell$ with $\ell>c$ are rationally dependent upon those which do not.  Consider now the $d$-th exterior power of the map dual to the surjection $ \Z^r\to \Z^d\to 0$ of (\ref{star}):
$$0\to \stackrel{d}\Lambda \check{\Z}^d\to  \stackrel{d}\Lambda\check{\Z}^r.$$
The image of $\stackrel{d}\Lambda \check{\Z^d}$ is a primitive vector in
$ \stackrel{d}\Lambda\check{\Z^r}$.
 \par\noindent
 By the natural duality isomorphism between $ \stackrel{d}\Lambda\check{\Z^r}$ and $\stackrel{r-d}\Lambda \Z^r$ (see  \cite {B}  \S 11, No. 11, Prop. 12) deduced from the pairings
 $$\begin{array}{lr}\stackrel{d}\Lambda\check{\Z}^r\otimes \stackrel{d}\Lambda{\Z}^r\to \Z,\ \ \stackrel{d}\Lambda{\Z}^r\otimes\stackrel{r-d}\Lambda \Z^r\to \Z,\cr\end{array}$$ this vector correspond to the image of $\stackrel{r-d}\Lambda {\mathcal L}$ in such a way that the coordinate which corresponds to the determinant of the vectors $\gamma_{i_1},\ldots ,\gamma_{i_d}$ in $\Z^d$ is a rational multiple of the determinant $\hbox{\rm Det}_{K,\mathbf L_0}\big( (\langle m-n\rangle )\big)$, which is non-zero since our base field is of characteristic zero. \par
Equation \ref{(**)} now shows that the ideal of $R$ generated by the $J_{K,\mathbf L_0}$ differs from the ideal generated by the images of the products $U_{i_1}\dots U_{i_d}$ such that $\hbox{\rm Det}(\gamma_{i_1},\ldots ,\gamma_{i_d})\neq 0$  only by the product by invertible ideals, so that these two ideals determine isomorphic blowing ups, which proves the Proposition.\end{proof}
\begin{remark} The proof found in \cite {LJ-R} is valid in the non-normal case; the proof given here makes explicit the connection of the logarithmic jacobian ideal with the usual one.\end{remark}
\begin{remark}\label{idproduct}
The isomorphism of Proposition \ref{product} carries the
logarithmic jacobian ideal of $k[t^{\Gamma\times\Gamma'}]$ onto
the tensor product of the logarithmic jacobian ideals of the
factors.
\end{remark}
\begin{remark}\label{dimone}
In the one-dimensional case the logarithmic jacobian ideal is the maximal ideal corresponding to the closed orbit. It is a classical fact that iterating the blowing-up of the singular point resolves the singularities of any branch.
\end{remark}

\section{The sheaf of logarithmic jacobian ideals on a toric variety} \label{sheaf}

Let the pair $(\Sigma, \Gamma)$ define a toric variety
$T_{\Sigma}^\Gamma$ as in Definition \ref{def-general-toric}.

On the affine open set $T^{\Gamma_\s}$, $\s \in \Sigma$  we consider the
 ideal $\J_\s$ of $k [t^{\Gamma_\s}]$  generated by
monomials of the form  $t^{\a}$, where $\a$ belongs to the set
\[
| \J_\s | = \{  \a_1 + \cdots  + \a_d \, \mid \,  \a_1, \dots,
\a_d \in \Gamma_\s \mbox{ and } \a_1 \y  \cdots \y \a_d \ne 0 \}.
\]
As we saw in Proposition \ref{Nashbl} the ideal $\J_\s$ is
called the \textit{logarithmic jacobian ideal} of $T^{\Gamma_\s}$.

\begin{remark} \label{gen}
If $\g_1, \dots, \g_r$ are generators of the semigroup
$\Gamma_\s$ then the monomials $t^\a$, for $\a$ in
\begin{equation} \label{set-j}
 \{  \g_{i_1} + \cdots +  \g_{i_d} \, \mid \, \g_{i_1}
 \y  \cdots \y \g_{i_d} \ne 0, \, 1 \leq  i_1, \dots, i_d \leq r
 \},
\end{equation}
generate the ideal  $\J_\s$. Abusing notation we denote the
set (\ref{set-j})  with the same letter $\J_\s$, whenever the set
of generators of $\Gamma_\s$ is clear from the context.
\end{remark}

\begin{proposition}\label{jacsheaf}
The family $\{ \J_\s \mid \s \in \Sigma \}$ defines a
$T^M$-invariant sheaf of ideals  $\J$ on $T_\Sigma^\Gamma$, which
is called the sheaf of logarithmic jacobian ideals of
$T_\Sigma^\Gamma$.
\end{proposition}

{\textit{Proof.}} It is sufficient to check that if $\t \leq \s$,
$\s \in \Sigma$, then the ideal $\J_\t$ coincides with the
extension $\J_\s k [t^{\Gamma_\t}]$, induced by the inclusion
$k[t^{\Gamma_\s}] \hookrightarrow k [t^{\Gamma_\t}]$ defined by
$\Gamma_\s \subset \Gamma_\t$.

By Lemma \ref{conditions-aff} if $m \in \Gamma_\s$ belongs to the
relative interior of the cone $\check{\s} \cap \t^\perp$ then we
have that $\Gamma_\t = \Gamma_\s + \Z_{\geq 0} (-m)$.

If  $\g_1, \dots, \g_r$ are generators of $\Gamma_\s$ then $\g_1,
\dots, \g_r, -m$ are generators of $\Gamma_\t$. This implies the
inclusion $\J_\s \subset \J_\t$. By Remark \ref{gen}  an exponent
${\a}$ in $\J_\t$ which does not belong to the set $\J_\s$ is of
the form: $ \a= \g_{i_1} + \cdots + \g_{i_{d-1}} - m, \mbox{ with
} \g_{i_1} \y  \cdots \y \g_{i_{d-1}}  \y (-m) \ne 0$. Then, the
element $\beta:= \g_{i_1} + \cdots + \g_{i_{d-1}} +  m$ belongs to
$\J_\s$ and we obtain that: $ t^{\a} = t^{-2m} t^{\b} \in \J_\s k
[t^{\Gamma_\t}], \mbox{ and } \J_\s k [t^{\Gamma_\t}] = \J_\t$.
\hfill $\ {\Box}$

   \begin{proposition} \label{jacoiso} The toric variety $T^\Gamma_\Sigma$ is non-singular
 if and only if the blowing up of the logarithmic jacobian ideal is an isomorphism.
\end{proposition}
\textit{Proof.}
We only have to prove that if the blowing up of the logarithmic jacobian ideal of an affine toric variety is an
isomorphism the variety is smooth.
We deal first with the case of a semigroup $\Gamma$
such that the cone $\check\s$ generated by $\Gamma$
is strictly convex, or equivalently that the cone $\s$ is of dimension $d ={\rm rank}M \geq 1$.
In this situation the semigroup $\Gamma$ has a unique
minimal system of generators  $\gamma_1, \ldots , \gamma_d,\gamma_{d+1},\ldots $  (see \cite{Ewald}, Chapter V, Lemma
3.5, page 155. The result is proved there for $\check\sigma\cap M$ but the same argument applies to $\Gamma$).  If there are more than $d$ generators, we may assume that the first $d$ generators are linearly independent. 
Then $\gamma_{d+1}$ is linearly dependent on the previous ones
which gives us another element $m=\gamma_1+\cdots
+\gamma_{i-1}+\gamma_{d+1}+\gamma_{i+1}+\cdots +\gamma_d$ of our
ideal. Our assumption ensures that
$m-m^{(1)}=\gamma_{d+1}-\gamma_i$ (or its opposite) is in $\Gamma$, which contradicts the
assumption of minimality since the appearance of $\gamma_{d+1}$ in the right hand side of an expression $\gamma_{d+1}-\gamma_i=\sum a_k\gamma_k,\ a_k\in \N$, would contradict the strict convexity of the cone $\check\sigma$ by implying either that some positive multiple of $-\gamma_{d+1}$ is in $\Gamma$ or that $-\gamma_i$ is in $\Gamma$. The same argument works for $\gamma_i-\gamma_{d+1}$. Therefore $\Gamma$ has $d$ independent
generators which generate $M$ and $k[t^\Gamma]$ is a polynomial
ring.\par
If the dimension of $\s$ is $< d$ we deduce from the assumption and  Proposition \ref{prev}, ii) below
that $M(\s, \Gamma) = M(\s)$. Then we reduce to the case $\dim \s = d$  by  Lemma \ref{qzero} below. Neither of those two results uses this proposition.
 \hfill $\ {\Box}$\par\medskip

\begin{lemma} \label{pw}
There is a continuous  piecewise linear function $\ord_{\J} \colon
|\Sigma | \rightarrow \R$ such that for each $\t \in \Sigma$ the
function $\ord_{\J_{\t}}$ is the restriction of $\ord_{\J}$ to
$\t$.
\end{lemma}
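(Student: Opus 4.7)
The plan is to verify that the locally defined order functions $\ord_{\J_\t}$ agree on overlaps $\t_1 \cap \t_2$ of the fan, and conclude by gluing. This reduces, by induction on dimension of overlap cells, to showing the compatibility $\ord_{\J_\t}|_\t = \ord_{\J_\s}|_\t$ whenever $\t \leq \s$. Once compatibility is in hand, the function $\ord_{\J}$ is defined unambiguously on $|\Sigma|$ by $\ord_{\J}(\nu) = \ord_{\J_\s}(\nu)$ for any $\s \in \Sigma$ containing $\nu$, and piecewise linearity plus continuity on each closed cone $\s$ follows from the general fact, noted after formula (\ref{ord}), that the order function attached to a finite set of monomial exponents is piecewise linear and continuous as the support function of its Newton polyhedron.

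First I would establish the easy inequality $\ord_{\J_\t}(\nu) \leq \ord_{\J_\s}(\nu)$ for $\nu \in \t \subset \s$. From the intrinsic definition of $|\J_\bullet|$ as the set of sums $\a_1 + \cdots + \a_d$ of elements of the corresponding semigroup with $\a_1 \wedge \cdots \wedge \a_d \neq 0$, and from the inclusion $\Gamma_\s \subset \Gamma_\t$, one gets immediately $|\J_\s| \subset |\J_\t|$, so the minimum of $\langle \nu, - \rangle$ over the larger set is no larger.

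The key step is the reverse inequality $\ord_{\J_\t}(\nu) \geq \ord_{\J_\s}(\nu)$ for $\nu \in \t$. By Proposition \ref{lj} we have the ideal identity $\J_\t = \J_\s \cdot k[t^{\Gamma_\t}]$. Since both ideals are generated by monomials in a semigroup algebra, this identity lifts to the level of exponents: every $\beta \in |\J_\t|$ can be written as $\beta = \a + \g$ with $\a \in |\J_\s|$ and $\g \in \Gamma_\t$. For $\nu \in \t$, the vector $\g \in \Gamma_\t \subset \check{\t}$ pairs nonnegatively with $\nu$, so $\langle \nu, \beta \rangle = \langle \nu, \a \rangle + \langle \nu, \g \rangle \geq \langle \nu, \a \rangle \geq \ord_{\J_\s}(\nu)$. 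Taking the minimum over $\beta \in |\J_\t|$ yields the desired inequality.

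With compatibility proved for $\t \leq \s$, the general overlap case $\s \cap \s' = \t \in \Sigma$ follows by applying the result twice: $\ord_{\J_\s}|_\t = \ord_{\J_\t} = \ord_{\J_{\s'}}|_\t$. This makes $\ord_{\J}$ well-defined on $|\Sigma|$, and since restrictions to each maximal cone of the fan are piecewise linear and continuous, the glued function is continuous and piecewise linear on $|\Sigma|$. The only delicate point in the whole argument is the monomial lifting step, which is essentially a restatement that in a semigroup algebra the monomial ideals are in bijection with the order filters (unions of translates $\a + \Gamma$) of the semigroup; everything else is bookkeeping.
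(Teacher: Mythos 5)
Your proof is correct and fills in exactly the details that the paper's one-line proof ("This follows from the definition of $\ord_{\J_\s}$, by using that $\J$ is a sheaf of monomial ideals") leaves implicit: the sheaf compatibility $\J_\t = \J_\s\,k[t^{\Gamma_\t}]$ from Proposition \ref{lj} translates at the level of exponent sets, via $|\J_\t| + \Gamma_\t = |\J_\s| + \Gamma_\t$, into the two inequalities you prove, and then the glued function is well-defined and piecewise linear. The approach is the same as the paper's; you have simply made the bookkeeping explicit.
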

{\textit{Proof.}} This follows from the definition of
$\ord_{\J_\s}$ (see (\ref{ord})), by using that $\J$ is a sheaf of
monomial ideals.

\begin{remark}
Note that Lemma \ref{pw} holds more generally if we replace $\J$
by any sheaf of monomial ideals $\mathcal{I}$ on
$T_\Sigma^\Gamma$.
\end{remark}

The will need the following lemma  in Section \ref{iteration}.
\begin{lemma}\label{induced}
Let $\theta^\perp\cap\Gamma$ be a face of the finitely generated
semigroup  $\Gamma\subset M$. The logarithmic jacobian ideal
$\tilde\J$ of the image $\tilde\Gamma$ of $\Gamma$ in the lattice
$M/M(\theta)$ is equal to the image of the logarithmic jacobian
ideal $\J$ of $\Gamma$.
\end{lemma}
\textit{Proof} Let us denote by $\tilde\gamma_i$ the images in
$M/M(\theta)$ of the generators $\gamma_i$ of $\Gamma$ and by $p$
 the rank of the lattice $M/M(\theta)$. If $\tilde\gamma_{i_1}, \ldots
,\tilde\gamma_{i_{p}}$ are linearly independent in $M/M(\theta)$,
then $\gamma_{i_1}, \ldots ,\gamma_{i_{p}}$ must be linearly
independent from $\theta^\perp$.  Remark that
  since $\theta^\perp\cap\Gamma$ is a face
it must contain $d-p$ generators of $\Gamma$ which are linearly
independent, since  $\theta^\perp\cap\Gamma$ spans the rank $d-p$
lattice $M(\theta, \Gamma)$.  Choosing linearly independent
generators $\gamma_{i_{p+1}} , \dots,
\gamma_{i_d}\in\theta^\perp\cap\Gamma$ of $\Gamma$ gives us a
generator $\gamma_{i_1}+ \cdots +\gamma_{i_{p}}+ \gamma_{i_{p+1}}
+ \cdots + \gamma_{i_d}$ of $\J$ whose image is
$\tilde\gamma_{i_1}+ \cdots +\tilde\gamma_{i_{p}}$, showing that
$\tilde\J$ is contained in the image of $\J$. If we now take $d$
independent generators $\gamma_{i_1}, \ldots
,\gamma_{i_{p}},\gamma_{i_{p+1}} , \dots, \gamma_{i_d}$ of
$\Gamma$, since they generate $M$, there must exist $d-p$
independent elements in their images, say $\tilde\gamma_{i_1},
\ldots ,\tilde\gamma_{i_{p}}$. Then the image $\tilde\gamma_{i_1}+
\cdots +\tilde\gamma_{i_{p}}+\tilde\gamma_{i_{p+1}} + \cdots +
\tilde\gamma_{i_d}$ belongs to the logarithmic jacobian ideal
$\tilde\J$, which shows that the image of $\J$ is equal to
$\tilde\J$.\hfill $\ {\Box}$

\section{Iterating the blowing-up of the logarithmic jacobian ideal}\label{iteration}

Let $\Gamma \subset M$ a finitely generated subsemigroup of a rank
$d$ lattice $M$ such that $\Z \Gamma =M$. We assume in addition
that the convex rational cone $\check{\sigma}: = \R_{\geq 0} \Gamma$, which is $d$-dimensional since $\Z \Gamma =M$, is strictly convex, which is equivalent to saying that the dual cone $\s \subset N_\R$ is strictly convex
of dimension $d$. The semigroup $\Gamma$ determines the affine
toric variety $T^{\Gamma} = \mathrm{Spec } k [ t^{\Gamma} ]$.
We fix a finite set of generators $\g_1, \dots, \g_r$ of $\Gamma$.
We consider the set
\[
\J := \{ \g_{i_1} + \cdots + \g_{i_d} \mid \g_{i_1} \y \dots \y
\g_{i_d} \ne 0, \, 1 \leq i_1, \dots, i_d\leq r  \}
\] defining the logarithmic jacobian ideal of $T^\Gamma$.

The Newton polyhedron $\Newton_\s (\J)$ of the monomial ideal
$\J$ (see Section \ref{blow}),  is contained in the interior of $\check{\s}$, since
the elements of $\J$ are sums of $d$-linearly independent elements
in the $d$-dimensional cone $\check{\s}$. The set $\J$ determines
the \textit{order function} defined by (\ref{ord}).
The maximal cones  $\t \subset \s$ of linearity of the function
$\ord_{\J}$ form the $d$-skeleton of a fan $\Sigma$ supported on $\s$.
The map
\begin{equation} \label{m-tau}
\t \mapsto m  \mbox{ if } \ord_\J (\nu) = \langle \nu, m \rangle
\mbox { for all } \nu \in \t.
\end{equation} is a bijection between the set $\Sigma(d)$ of $d$-dimensional cones of $\Sigma$ and the set of vertices of the polyhedron $\Newton_\s (\J)$.

We now consider the blowing up of the monomial ideal $\J$. A cone
$\t^{(1)} \in \Sigma (d)$ determines a vertex $m^{(1)}$ of
$\Newton_\s (\J)$ by (\ref{m-tau}) and also the finitely generated semigroup
\[
\Gamma_{\t^{(1)} }^{(2)} : = \Gamma + \sum_{m \in \J} \Z_{\geq 0 }
(m-m^{(1)})\subset \check\t^{(1)}\cap M.
\]
In view of the description recalled above of $\Sigma(d)$ in terms of $\Newton_\s (\J)$, the cone $\R_{\geq 0} \Gamma_{ \t^{(1)} }^{(2)} $ is
$\check{\t}^{(1)}$. The affine toric variety $T^{ \Gamma_{\t^{(1)}}^{(2)}}$ is a chart of the blowing up of $\J$ and this
toric variety is covered by charts of this form (see Section \ref{blow}).
    The semigroup $\Gamma_{\t^{(1)}}^{(2)}$ is generated by $\{ \g_1,
\dots, \g_r\} \cup \{  m -m^{(1)} \}_{m \in \J}$.

\begin{lemma}    \label{repre-gen}
If we choose a representation $m^{(1)} = \g_{i_1} + \cdots + \g_{i_d}$ as a sum of linearly independent vectors in $\Gamma$
then the semigroup $\Gamma_{\t^{(1)}}^{(2)}$ is generated by
 \begin{equation} \label{set-gen}
 \{ \g_{i_1}, \dots, \g_{i_d} \} \cup \{ \g_l - \g_{i_s} \mid 1 \leq s \leq d,
 1\leq l \leq r, l \ne i_s, \g_l \y \bigwedge_{j=1, \dots, d}^{j\ne i_s}  \g_{i_j}  \ne 0 \}.
\end{equation}
\end{lemma}
\textit{Proof.}    To simplify the notations we can assume that  $i_s = s$ for $s =1, \dots,d$.
First notice that if $\g_l$, $1 \leq l \leq r$,
has the property that $\g_l \y \bigwedge_{i=1, \dots, d}^{i\ne i_0} \g_i$ then the vector
$m' = \g_l + \sum_{i=1, \dots, d}^{i\ne i_0} \g_i $ belongs to $\J$  hence
$m' - m^{(1)} = \g_l - \g_{i_0}$ is a generator of   $\Gamma_{\t^{(1)}}^{(2)}$. Since
$\g_1, \dots, \g_d \in  \Gamma_{\t^{(1)}}^{(2)}$ by construction we get that
$\g_l \in      \Gamma_{\t^{(1)}}^{(2)}$ is not in the minimal set of generators of the semigroup
$\Gamma_{\t^{(1)}}^{(2)}$.

Suppose now that $\g_{p_1}, \dots, \g_{p_d}$ are linearly independent vectors.
We denote by $s_0$ the integer such that $\g_{p_1}, \dots, \g_{p_{s_0}} \in \{ 1, \dots,d \}$ and
$\g_{p_{s_0 +1}}, \dots, \g_{p_d} \in \{ d+1, \dots, r\}$. We can permute the vectors $\g_{p_{s_0 +1}}, \dots, \g_{p_d}$ in such
a way that the $i$-th coefficient of the expansion of $\g_{p_i}$ in terms of the basis $\g_1, \dots,\g_d$ of $N_\Q$, is non-zero for
$i = s_0 +1, \dots, d$ (otherwise we would get that       $\g_{p_1}\y \cdots \y \g_{p_d} =0 $  contrary to the assumption).
Then the vector $m= \sum_{i=1}^d \g_{p_i}$ belongs to $\J$ and we deduce that
$m - m^{(1)} = \sum_{i= s_0 +1}^d (\g_{p_i} - \g_i)$ hence the vectors
(\ref{set-gen}) generate the
semigroup      $\Gamma_{\t^{(1)}}^{(2)}$.
\hfill $\ {\Box}$

We denote also  by
$\J_{\t^{(1)}}^{(2)}$ the finite subset of
$\Gamma_{\t^{(1)}}^{(2)}$ corresponding to the monomials
generating the logarithmic jacobian ideal of
$T^{\Gamma_{\t^{(1)}}^{(2)}}$, by the same symbol this last ideal of $k[t^{\Gamma_{\t^{(1)}}^{(2)}}]$ , and by
$ \ord_{\J_{\t^{(1)}}^{(2)}} \colon \t^{(1)} \rightarrow \R $ the
corresponding order function.
\begin{remark} \label{principal}
  On the chart $T^{  \Gamma_{ \t^{(1)} }^{(2)}  }$
the pull back of the ideal $\J$ by the blowing up of $\J$ is the
principal ideal $t^{m^{(1)}} k [ t^{\Gamma_{ \t^{(1)} }^{(2)}}] =
t^\J k [ t^{\Gamma_{ \t^{(1)} }^{(2)}}]$. The Newton polyhedron
\[
\Newton_{\t^{(1)}} ( \J) := \J +  \check{\t}^{(1)} = m^{(1)} +
\check{\t}^{(1)}
\]
 of $t^\J k [ t^{\Gamma_{\t^{(1)} }^{(2)}}]$ is \textit{principal}, i.e.,
 it has only one vertex $m^{(1)}$.
\end{remark}

\begin{lemma} \label{global}
There is a continuous  piecewise linear function $\ord_{\J^{(2)}} \colon
\s \rightarrow \R$ such that for each $\t^{(1)}\in \Sigma(d)$
the function
$\ord_{\J_{\t^{(1)}}^{(2)}}$ is the restriction of $\ord_{\J^{(2)}}$ to
$\t^{(1)}$.
\end{lemma}
\textit{Proof.} This follows from Lemma \ref{pw}.
 \hfill $\
{\Box}$

As above, the maximal cones  $\t \subset \s$ of linearity of the function
$\ord_{\J^{(2)}}$ form the $d$-skeleton of a fan $\Sigma^{(2)}$
supported on $\s$ and subdividing the fan $\Sigma$.
 In particular, if
 $\t^{(2)}  \in \Sigma^{(2)} (d)$ is contained in $ \t^{(1)}   \in \Sigma (d)$
 then we denote by $m^{(2)}$ the vertex of the
Newton polyhedron  $\Newton_{\t^{(1)}} ( \J_{\t^{(1)}}^{(2)} ) $
of $\J_{\t^{(1)}}^{(2)}$ such that
\[
\ord_{\J^{(2)}}  (\nu) =  \langle \nu, m^{(2)} \rangle \mbox{ for
all } \nu \in \t^{(2)}.
\]

By iterating this construction we obtain a sequence of piecewise
linear functions $\ord_{\J^{(j)}}$ on $\s$, together with the
corresponding  fans $\Sigma^{(j)}$, with $\J = \J^{(1)}$ and
$\Sigma^{(1)} = \Sigma$, and such that $\Sigma^{(j)}$ is a
subdivision of $\Sigma^{(j-1)} $ for all $j \geq 2$.

By definition a cone $\t^{(j)} \in \Sigma^{(j)}(d)$ is contained
in a unique cone $\t^{(l)} \in \Sigma^{(l)} (d)$, for $0 \leq l
\leq j-1$, where we set $\t^{(0)} := \s$. Then we have unique
vectors $m^{(l)} \in M$ such that
\[
\ord_{\J^{(l)}}  (\nu) =  \langle \nu, m^{(l)} \rangle \mbox{ for
all } \nu \in \t^{(j)} \mbox{ and } 1 \leq l \leq j.
\]

The cone $\t^{(j)}$ corresponds to a chart of the  blowing up of
the logarithmic jacobian ideal $\J^{(j)}_{\t^{(j-1)}}$ of $k [ t
^{ \Gamma_{\t^{(j-1)}}^{(j)}}]$. This chart is the affine toric
variety defined by the semigroup
\[
\Gamma_{\t^{(j)}}^{(j+1)} = \Gamma_{\t^{(j-1)}}^{(j)} + \sum_{m
\in \J^{(j)}_{\t^{(j-1)}} } \Z_{\geq 0} (m - m^{(j)} ).
\]
By induction this procedure also provides a system of generators
of each semigroup $\Gamma_{\t^{(j)}}^{(j+1)}$. We  use also the
notation $\J_{\t^{(j)}}^{(j+1)}$ to refer to the finite set of
generators of the logarithmic jacobian ideal of $k [ t^{
\Gamma_{\t^{(j)}}^{(j+1)} }]$ (see Remark \ref{gen}). The
following inclusions, for $j\geq 2$, are consequence of the
definitions:
\begin{equation} \label{essential}
\Gamma_ {\t^{(j-1)}} ^{(j)} \subset \Gamma_{\t^{(j)}}^{(j+1)},
\quad  k[t^{\Gamma_ {\t^{(j-1)}} ^{(j)}}]\subset k[t^{
\Gamma_{\t^{(j)}}^{(j+1)}}], \quad \J_{\t^{(j-1)}}^{(j)} k[t^{
\Gamma_{\t^{(j)}}^{(j+1)}}] \subset \J_{\t^{(j)}}^{(j+1)} k[t^{
\Gamma_{\t^{(j)}}^{(j+1)}}].
\end{equation}
By
(\ref{essential}) we have that
\begin{equation} \label{essential2}
\ord_{\J^{(j+1)}} (\nu) \leq  \ord_{\J^{(j)}} (\nu)  \mbox{ for
all } \nu \in \s.
\end{equation}

\begin{remark} \label{principal-k}
For $1 \leq l \leq j$ we deduce from Remark \ref{principal} that $
\J^{(l)}_{\t^{(l-1)}}  k [ t^{ \Gamma^{(l+1)}_{\t^{(l)} }} ] =
t^{m^{(l)}}  k [ t^{ \Gamma^{(l+1)}_{\t^{(l)} } } ]$, hence the
Newton polyhedron
 $\Newton_{\t^{(l)}} ( \J^{(l)}_{\t^{(l-1)}} ) =
\J^{(l)}_{\t^{(l-1)}} + \check{\t}^{(l)}  =   m^{(l)} +
\check{\t}^{(l)}   $ has only one vertex $m^{(l)}$. \end{remark}

\begin{notation} We denote the Newton polyhedron $\Newton_{\t^{(j-1)}} ( \J^{(j)}_{\t^{(j-1)}} )$ simply by $\Newton( \J^{(j)}_{\t^{(j-1)}} )$ since there is no risk of confusion.
\end{notation}

\begin{proposition} \label{finite}
The following assertions are equivalent:
\begin{enumerate}
\item[{\rm i.}] $\t^{(j)} = \t^{(j-1)}$

\item[{\rm ii.}] The blowing up of the ideal $\J _ {\t^{(j-1)}} ^
{(j)} $ of $T^{ \Gamma_ {\t^{(j-1)}} ^ {(j)} }$ is a finite
morphism.
\end{enumerate}

\end{proposition}
\textit{Proof.} The hypothesis {\rm i.} is equivalent to the
following fact: the semigroups $\Gamma_ {\t^{(j-1)}} ^ {(j)}$ and
$\Gamma_ {\t^{(j)}} ^ {(j+1)}$ have the same saturation in the
lattice $M$; it is equal to $\check{\t}^{(j-1)} \cap M   =
\check{\t}^{(j)} \cap M $. This is equivalent to the following
geometric statement: the composite of the normalization of $T^{
\Gamma_ {\t^{(j)}} ^ {(j+1)} }$ with the blowing up of the
logarithmic jacobian ideal of $T^{ \Gamma_ {\t^{(j-1)}} ^ {(j)} }$
is the normalization map of $T^{ \Gamma_ {\t^{(j-1)}} ^ {(j)} }$
and therefore this blowing up is finite. Conversely, if {\rm ii.}
holds, the blowing up morphism $T^{ \Gamma_ {\t^{(j)}} ^ {(j+1)}
}\to T^{ \Gamma_ {\t^{(j-1)}} ^ {(j)} }$ induces an isomorphism of
the normalizations, from which {\rm i.} follows in view of Remark
\ref{norma}. \hfill $\ {\Box}$
\begin{remark}\label{vertex}
The conditions of the Lemma are also equivalent to the fact that the Newton polyhedron of the ideal $\J _ {\t^{(j-1)}} ^
{(j)} $ has only one vertex $m^{(j)}$.
\end{remark}
\begin{definition} \label{function}
For any integer $j \geq 1$ we introduce  a function
\[
f^{(j)} : \{ \t \subset \s \mid  0 \ne \t \mbox { convex rational
polyhedral cone } \} \rightarrow \Z_{\geq 1}.
\]
If $\nu_1, \dots, \nu_s$ are the primitive integral vectors for
the lattice $N$ which span the edges of $\t$, then the value of
$f^{(j)}(\t)$ is defined by
\[
f^{(j)} (\t) := \sum_{i=1}^s \ord_{\J^{(j)}} (\nu_i).
\]
\end{definition}

\begin{remark} \label{key}
Notice that if $0 \ne \t$ is any rational  polyhedral cone
contained in $\t^{(j)} \in \Sigma^{(j)} (d)$ then $f ^{(j)} (\t) =
\sum_{i=1}^s \langle \nu_i , m^{(j)} \rangle$ and if $\t$ is of
dimension $d$ then $f ^{(j)} (\t) \geq d$. Moreover, by
(\ref{essential2}) we obtain that
\begin{equation} \label{ine}
f^{(j)} (\t) \leq  f^{(j-1)} ( \t).
\end{equation}
\end{remark}

\begin{lemma} \label{smooth}
The following conditions are equivalent for $j\geq 1$:
\begin{enumerate}
\item[{\rm i.}]  The equality $f^{(j)} ( \t^{ (j-1) }) = d$ holds.

\item[{\rm ii.}] The cone $\t^{ (j-1) }$ is regular for the
lattice $N$ and $\Gamma ^{(j)}_{\t^{ (j-1) } }  = \check{\t}^{
(j-1) } \cap M$.

\item[{\rm iii.}] The toric variety $T^{\Gamma^{(j)}_{\t^{ (j-1)
}}} $ is smooth.
\end{enumerate}
\end{lemma}
Note that if the conditions of the Lemma are satisfied, the polyhedron $\Newton_{\t^{(j-1)}}
(\J^{(j)}_{\t^{ (j-1) } } )$ has only one vertex $m^{(j)}$.\par\noindent
\textit{Proof.} It is clear that {\rm ii.} and {\rm iii.} are
equivalent. It is enough to prove the result for $j=1$. Suppose
first that {\rm i.} holds. By hypothesis the fan $\Sigma^{(1)}$ is
the fan consisting of the faces of $\s$. If $\nu_1, \dots, \nu_s$
are the primitive vectors for the lattice $N$ which span the cone
$\s $ then $\langle \nu_i, m \rangle
>0$, $i=1, \dots, s$ since $m=m^{(1)} $ belongs to the interior of
$\check{\s}$. Since $f(\s) = d = \sum_{i=1}^s \langle \nu_i, m
\rangle$ we get that $s= d$ and $\langle \nu_i, m \rangle = 1$.

By definition of $\J$ the vector $m$ is sum of $d$ generators of
$\Gamma$ which are linearly independent, say $m = \g_1 + \dots +
\g_d$. Since $\sum_{j=1}^d \langle \nu_i, \g_j \rangle = 1$ for
$i=1, \dots, d$ we obtain that, up to relabeling the  $\nu_i$,
the vectors  $\nu_1, \dots, \nu_d$ in $N_\R$ form the dual basis
of $\g_1, \dots, \g_d$ in $M_\R$. Finally, notice that the
parallelogram generated by the  primitive vectors $\g_1, \dots,
\g_d$  in $M_\R$ contains no integral points different from the
vertices. It follows that $\g_1, \dots, \g_d$ form a basis of $M$.

 Conversely, if {\rm ii.} holds then we check from the
 definitions that
{\rm i.} holds.
 \hfill $\ {\Box}$

\begin{proposition} \label{eq} Suppose that $\t^{(j)} \in \Sigma^{(j)} (d)$ is
contained in $\t^{(j-1)} \in \Sigma^{(j-1)} (d)$. The following
equalities are equivalent:
\begin{enumerate}
\item[{\rm i.}] $f^{(j)} ( \t^{(j)} ) = f^{(j-1)} ( \t^{(j)} )$,

\item[{\rm ii.}] $m^{(j)} = m^{(j-1)}$.
\end{enumerate}
\end{proposition}
\textit{Proof.} Notice that if $m^{(j)} = m^{(j-1)}$ then {\rm i.}
follows by Remark \ref{key}.

Suppose that the equality {\rm i.} holds. By Remark
\ref{principal-k} we have that
\[
\Newton_{ \t^{(j-1)} } ( \J^{(j-1)}_{\t^{(j-2)}} ) = m ^{(j-1)} +
\check{\t}^{(j-1)} \mbox{ and } \Newton_{ \t^{(j)} } (
\J^{(j)}_{\t^{(j-1)}} ) = m ^{(j)} + \check{\t}^{(j)}.
\]
Since $\t^{(j)}$ is contained in $\t^{(j-1)}$ we get that
$\check{\t}^{(j-1)} \subset  \check{\t}^{(j)} $ and then
$
\Newton_{ \t^{(j)} } ( \J^{(j-1)}_{\t^{(j-2)}} ) = m ^{(j-1)} +
\check{\t}^{(j)}
$.
By (\ref{essential}) we get
\begin{equation} \label{inc}
 m ^{(j-1)} +
\check{\t}^{(j)} \subset  m ^{(j)} + \check{\t}^{(j)}.
\end{equation}
Let $\nu_1, \dots, \nu_s$ be the primitive integral vectors for
the lattice $N$ which span the cone $\t^{(j)}$. The vector $\nu: =
\sum_{i=1}^s \nu_i$ belongs to the interior of the cones
$\t^{(j)}$ and $\t^{(j-1)}$. By Remark \ref{key} and the
hypothesis we deduce
\[
f^{(j-1)} ( \t^{(j)} ) = \langle \nu, m^{(j-1)} \rangle = f^{(j)}
( \t^{(j)} ) = \langle \nu, m^{(j)} \rangle.
\]
This equality and the inclusion (\ref{inc}) imply that
$m^{(j-1) } = m^{(j)}$.
 \hfill $\ {\Box}$

\begin{proposition} \label{bound}
There exists an integer $l \geq 1$ such that for any cone $\t \in
\Sigma^{(l)} (d) $ if  $f^{(1)} (\t) > d$ then $f^{(1)} (\t)
> f^{(l)} (\t)$.
\end{proposition}
\textit{Proof.} Let us assume that the assertion of the
Proposition does not hold. This implies that there exists a
infinite sequence of convex polyhedral cones
\begin{equation} \label{sequence}
\s = \t^{(0)} \supset \t^{(1)} \supseteq  \t^{(2)} \supseteq
\cdots \supseteq \t^{(j)} \supseteq \cdots,
\end{equation}
such that $\t^{(j)} \in \Sigma^{(j)} (d)$ and
\begin{equation} \label{assumption}
 f^{(j)} (\t^{(j)}) = f^{(1)} (\t^{(j)} )
> d  \mbox{,  for all }j \geq 2.
\end{equation}

By Remark \ref{key} we have that $f^{(j)} (\t^{(j)}) = f^{(j-1)}
(\t^{(j)})$ for all $j \geq 2$. Proposition \ref{eq} implies  then
that $m^{(j)} = m^{ (j-1)}$ for all $j \geq 2$.

\begin{claim} \label{strict}
There exists a strictly increasing sequence $(i_j)_{j \geq 1}$ of
integers $\geq 0$ such that $\t^{(i_{j})} \ne \t^{(i_{j}+1)}$,
that is, the inclusion $\t^{(i_{j})} \supset \t^{(i_{j}+1)}$ is
strict, for $j \geq 1$.
\end{claim}
\textit{Proof of the claim.} Assume that the claim does not hold.
This implies that $\t^{(j)} = \t^{(j-1)}$ for all $j \geq 1$. By
Proposition \ref{finite} the  blowing up of the ideal $\J _
{\t^{(j-1)}} ^ {(j)} $ of $T^{ \Gamma_ {\t^{(j-1)}} ^ {(j)} }$ is
a finite morphism, dominated by the normalization of $T^\Gamma$,
 for all $j \geq 1$. It follows that for $j \gg
0$ the variety $T^{ \Gamma_
{\t^{(j-1)}} ^ {(j)} }$ is normal. By Proposition \ref{jacoiso}, this variety is also smooth.
 By Lemma
\ref{smooth} it follows that $f^{(j)} (\t^{(j)}) = d$ for $j \gg
0$. This is a contradiction with (\ref{assumption}). \hfill $\
{\Box}$

\medskip

 Let us fix a
representation for $m = m^{(1)}$ in terms of the generators of $\Gamma$:
\[
m  = \g_1 + \cdots  + \g_d \mbox{ with } \g_1 \y \cdots \y \g_d
\ne 0,
\]
(up to an eventual relabeling  of the generators $\{\g_i
\}_{i=1}^r$ of the semigroup  $\Gamma$).

By Claim \ref{strict} we can suppose without loss of generality
that $i_1 =0$, that is, the Newton polyhedron $\Newton_\s (\J)$
has at least two different vertices $m$ and $n$.

\begin{lemma}\label{end}
Given one of the $\gamma_j$ which appear in the decomposition of
$m$, say $\gamma_d$, for any $j \geq 0$ the vector $n_j := n -
j \g_d$  has the property that
\[
n_j  \in \J_{\t^{(j)}}^{(j+1)} \mbox{ and } (n_j -m) \y \g_1
\y  \cdots \y \g_{d-1} \ne 0.
\]
\end{lemma}
\textit{Proof.} We prove the assertion by induction on $j$. Notice
that for $j=0$ the vector $n_0 = n$ belongs to $\J$ by hypothesis.
We suppose by induction that $n_l \in \J_{\t^{(l)}}^{(l+1)}$, $1
\leq l \leq j$.

We  prove first that:
\begin{equation} \label{independent}
(n_l -m) \y \g_1  \y \cdots \y \g_{d-1} \ne 0 \mbox{,  for } 0
\leq l \leq j.
\end{equation}

Assume on the contrary that (\ref{independent}) does not hold for
some $0 \leq l \leq j$.  After relabeling the vectors $\g_1,
\dots, \g_{d-1}$ if necessary, we have an expansion of the form:
\begin{equation} \label{decomposition}
n_l -m = a_1 \g_1 + \cdots + a_h \g_h \mbox{ with } h \leq d-1,
\end{equation}
and in addition the coefficients of (\ref{decomposition}) are
non-zero rational numbers which are not of the same sign, that is,
\[
\left\{
\begin{array}{ccccccl} a_i & > & 0 & \mbox{for} & i & = & 1, \dots, s
\\
a_i & < &  0 & \mbox{for} & i &  = & s+1, \dots, h.
\end{array}
\right.
\]
Indeed, if all coefficients $a_i$ in (\ref{decomposition}) are
$\geq 0$  we obtain that
\[
n = m + a_1 \g_1 + \cdots + a_h \g_h + l \g_d \subset m + \check\s,
\]
contradicting that $n \ne m$ is a vertex of $\Newton_\s (\J)$. In
particular, we have that $n_l \ne m$. Similarly, if all the
coefficients $a_i$ are smaller than zero we get that
\[
m = n_l -  a_1 \g_1 - \cdots - a_h \g_h \subset  n_l +
\check\s \subset n_l +  \check{\t}^{(l)}.
 \]
This implies that $m$ is not a vertex of the Newton polyhedron
$\Newton_{\t^{(l)}}  (\J_{\t^{(l)}}^{(l+1)} )$,  since $n_l \in
\J_{\t^{(l)}}^{(l+1)}$ and $n_l \ne m$.

If $\a \in \R$ we denote by  $\lceil \a \rceil$ the smallest
integer $p$ such that $\a \leq p$.

\begin{claim} \label{Q}
If ${q} := \sum_{i=1}^s \lceil a_i \rceil$, $0 \leq {p} \leq q$
and $b_i$ are integers such that $0 \leq b_i \leq \lceil a_i
\rceil$, $i=1, \dots, s$ and $\sum_{i=1}^s b_i = {p}$ then the
vector $\beta_{l,{p}} := n_l - \sum_{i=1}^s b_i \g_i$ belongs
to $\J_{\t^{(l+{p} )}}^{(l+{p}+1)}$.
\end{claim}
\textit{Proof of the claim.}  We prove the assertion by induction
on ${p}$. For ${p} =0$ we have $\beta_{l, 0} = n_{l}$ hence
the assertion holds by assumption. Suppose that $\beta_{l,{p}}
\in \J_{\t^{(l+{p} )}}^{(l+{p}+1)}$ for $0 \leq {p} < {q}$. The
vector
\[
\beta_{l,{p}}  - m = n_l - m - \sum_{i=1}^s b_i \g_i =
\sum_{i=1}^s (a_i - b_i) \g_i + \sum_{i =s+1}^h a_i \g_i
\]
belongs to $\Gamma_{\t^{(l+{p}+1 )}}^{(l+{p}+2)}$. Since ${p} <
{q}$ there is a strictly positive coefficient in this expansion of
$\beta_{l,{p}}  - m $, say $a_1 - b_1$, for instance. We get $
(\beta_{l,{p}}-m) \y \g_2 \y \cdots \y \g_d \ne 0$, hence the vector
\[ \beta_{l,{p}+1} := \beta_{l,{p}} - m + \g_2 + \cdots + \g_{d} =
\beta_{l,{p}} - \g_1
\] belongs to $\J_{\t^{(l+{p}+1 )}}^{(l+{p}+2)}$.  \hfill $\ {\Box}$

\medskip

By Claim \ref{Q}  the expansion
\[
\beta_{l, {q}} - m = \sum_{i=1}^s  (a_i - \lceil a_i \rceil) \g_i
+ \sum_{i = s+1}^h a_i \g_i
\]
has only coefficients $\leq 0$ and  $\beta_{l, {q}} \in
\J_{\t^{(l+{q}+1 )}}^{(l+{q}+2)}$. We get also that $m \ne
\beta_{l, {q}}$ since the coefficients $a_{s+1}, \dots, a_h$ are
non-zero and the vectors $\g_1, \dots , \g_h$ are linearly
independent. We deduce from this that
\[
m = \beta_{l, {q}} - \sum_{i=1}^s  (a_i - \lceil a_i \rceil) \g_i
-  \sum_{i = s+1}^h a_i \g_i \, \in \,  \beta_{l, {q}} + \check\s\, \subset \, \beta_{l, {q}} + {\check{\t}^{(j+{q}+1 )}}.
\]
This contradicts the assumption, $m$ being a vertex of the Newton
polyhedron of $\J_{\t^{(l+{q}+1 )}}^{(l+{q}+2)}$.

Finally, we have proven that (\ref{independent}) holds hence
\[ n_{j+1} = (n_j -m) + \g_1 +  \cdots +  \g_{d-1}  \in  \J_{\t^{(j+1)}}^{(j+2)}.\]
 This concludes the induction in the proof of Lemma \ref{end}. \hfill $\ {\Box}$

\medskip

The cone
\[
\t^{(\infty)} = \bigcap_{l \geq 1} \t^{(l)} = \bigcap_{j \geq 1}
\t^{(i_j)}
\]
is a closed convex subset of $\s$ different from $ 0 $. A vector
$0 \ne w \in \t^{(\infty)}$ defines a \textit{monomial valuation}
$\w$ of the field of fractions of $k [t^\Gamma]$, which verifies
that if $ 0 \ne \sum a_\g t^\g \in k [t^\Gamma]$ then
$
\w (  \sum a_\g t^\g  ) = \min_{a_\g \ne 0}  \langle w, \g
\rangle
$.
By definition this valuation is non-negative in the subrings $k [
t^{\Gamma_ {\t^{(j-1)}} ^ {(j)}}]$ for all $j \geq 1$. Notice that
the vector $w \in N_\R$ is not necessarily an element of $N_\Q$
and it may lie in a face of $\s$ (different from $0$).
We remark that for all $j\geq 1$ we have that
$ \hbox{\rm min}  \{ \langle w,\gamma\rangle   \mid   \gamma\in
\J^{(j)}_{\t^{(j-1)}}
   \} = \langle w,m\rangle  $
 since $w$ takes non negative values on $\J^{(j+1)}_{\t^{(j)}}$
 which contains the  set $\{\gamma-m\mid \gamma\in\J^{(j)}_{\t^{(j-1)}} \}$.

Since $\g_1 \dots, \g_d$ span $M_\R$ at least one of the vectors
$\g_i$ verifies that  $\langle w, \g_i \rangle \ne 0$. Suppose for
instance that $\langle w, \g_d \rangle > 0$.

By Lemma \ref{end}, for any integer $j\geq 0$  the vector $ n_j
= n - j \g_{d}$ belongs to $\J_{\t^{(j)}}^{(j+1)}  \subset
\Gamma_{\t^{(j)}}^{(j+1)} $. This implies that
\[
\w( t^{n_j}) = \langle w, n_j \rangle = \langle w, n \rangle
- j \langle w, \g_d \rangle
\]
becomes strictly negative for $j$ large enough. This is a
contradiction since $t^{n_j} \in k [
t^{\Gamma_{\t^{(j)}}^{(j+1)}}]$ and the valuation $\omega$ is non
negative on  the ring $k [ t^{\Gamma_{\t^{(j)}}^{(j+1)}}]$. \hfill
$\ {\Box}$

\begin{corollary} \label{bound2}
With the previous notations, given any sequence of the form
(\ref{sequence}) if $T^{\Gamma}$ is not smooth it is not possible
that $m^{(1)} = m^{(j)}$ for all $j \geq 2$.
\end{corollary}
\textit{Proof.} This is now a consequence of Proposition \ref{eq}.

\begin{definition} \label{def-sta}\par\noindent
\begin{enumerate}
\item[i.] If $0 \ne \eta \subset \s$ is a cone we denote by
$\nu_{\eta}$ the sum of the primitive  vectors, for the lattice
$N$
 in the edges of the cone $\eta$.
\item[ii.]A cone $\eta \subset \s$ is \textit{stable} if there is
an integer $I \geq 1$ such that $\eta \in \Sigma^{(j)}$ for all $j
\geq I$.

\item[iii.]  The \textit{stability problem} for the toric variety
$T^\Gamma$ consists of determining if the sequence of fans
$(\Sigma^{(j)})_{j\geq 0}$ stabilizes, that is $\Sigma^{(j)} =
\Sigma^{(j+1)}$ for $j \gg 0$.
\item[iv.]  If $\theta \in
\Sigma^{(l)}$ the \textit{stability problem} for the cone $\theta$
consists of determining if the sequence of fans,
 $\{ \theta^{(j)} \in \Sigma^{(j)} \mid \theta^{(j)} \subset   \theta \}$ $j \geq l$, stabilizes.
\end{enumerate}
\end{definition}
For instance,  if $\r \in \Sigma^{(j)}$ is of dimension one then  $\r$ is stable.

\begin{lemma} \label{qzero}
If  $\theta    \in \Sigma^{(j-1)}$ is a cone of codimension $>0$ and
 $M ( \theta, \Gamma^{(j)}_\theta) = M(\theta)$ (see Notation \ref{ind}), then the stability problem
for the cone $\theta \subset N_\R$ is equivalent to a  stability problem for the cone $\theta$, viewed in $(N_\theta)_\R$,
with respect to the sequence
of iterated Semple-Nash modifications of another toric variety of dimension equal to $\dim \theta$.
\end{lemma}
\textit{Proof}.
The sublattice $M (\theta) = M \cap \theta^\perp$ of $M$ is obviously saturated, so that
it is a direct summand of $M$. Let us consider a sublattice $M'$
of $M$ such that $M = M (\theta) \oplus M'$. Such a sublattice
$M'$ is spanned by vectors $v_{q_0 +1 }, \dots, v_d$, completing a
basis $v_1, \dots, v_{q_0}$ of $M (\theta)$ to a basis of $M$. Any
$\g \in M$ can be written in a unique way as $\g = \a_1(\g)  +
\a_2(\g) $ with $\a_1 (\g)  \in M (\theta)$ and $\a_2 (\g) \in
M'$. The restriction $\beta \colon M' \mapsto M/ M (\theta)$ of
the canonical map $M \to M/M (\theta)$ to the sublattice $M'
\subset M$ is an isomorphism. The image
$\tilde{\Gamma}_{\theta}^{(j)}$ of ${\Gamma}_{\theta}^{(j)}$ by the canonical map $M \mapsto M/ M (\theta)$ is
a semigroup of finite type, generates the lattice $M/ M (\theta)$
and spans a strictly convex cone $\R_{\geq 0}
\tilde{\Gamma}_{\theta}^{(j)}$.

The semigroup  $\Gamma^{(j)}_\theta  \cap  \theta^{\perp}$, which  is the minimal face of
$\Gamma^{(j)}_\theta$, is a rank $q_0$ lattice by Lemma \ref{conditions-aff}.
Using the equality    $\Gamma^{(j)}_\theta  \cap  \theta^{\perp} = M(\theta)$
one checks directly that the map
\begin{equation}       \label{spliting}
{\Gamma}_{\theta}^{(j)} \longrightarrow  M (\theta) \times
\tilde{\Gamma}_{\theta}^{(j)},  \quad  \g \mapsto (\a_1
(\g), \beta \circ \a_2 (\g) )
\end{equation}
is a semigroup isomorphism.
 The map (\ref{spliting}) determines an
isomorphism $ T^ {{\Gamma}_{\theta}^{(j)}}
\stackrel{\simeq}{\longrightarrow } \orb (\theta,
{\Gamma}_{\theta}^{(j)} ) \times {
T^ {\tilde{\Gamma}_{\theta}^{(j)}}} $.
 In particular, the
variety $T^{{\Gamma}_{\theta}^{(j)}}$ is smooth if and
 only if $T^{\tilde{\Gamma}_{\theta}^{(j)}}$ is smooth.

 According to Remark
\ref{idproduct} and Lemma \ref{induced} the blowing up of
logarithmic jacobian ideals commutes with the splitting defined by
(\ref{spliting}).      The assertion follows from this since the cone $\theta$,  viewed in the $\R$-linear subspace
$(N_\theta)_\R$ it spans in $N_\R$, is the dual cone of $\R_{\geq 0}
\tilde{\Gamma}_{\theta}^{(j)}$.
\hfill $\ {\Box}$

 \begin{remark}
Geometrically, we see that the semigroup   $\tilde\Gamma_{\theta}^{(j)}$  in Lemma \ref{qzero}
corresponds to
 the toric variety of dimension $\dim \theta$ which is
 a transverse linear section of $T^{\Gamma^{(j)}_\theta}$
 at the point $(1,\ldots, 1)$ of the orbit corresponding to $\theta$.
 \end{remark}

\begin{proposition}  \label{stable}
If   $\theta$ is a stable cone then there is an integer $I
\geq 1$ such that the variety $T^{\Gamma^{(j)}_\theta}$
 is smooth
for all $j \geq I$. If $T^{\Gamma^{(j)}_\theta}$
 is smooth the cone $\theta$ is stable.

\end{proposition}
\textit{Proof}. By Lemma \ref{qzero}  and ii. of Proposition \ref{prev} below, we can assume that $\codim \theta = 0$.
The  blowing up of the ideal $\J _ {\theta} ^ {(j)} $ of $T^{
\Gamma_ {\theta} ^ {(j)} }$ is a finite morphism, dominated for all $j \gg 0$
by the
normalization of $T^{ \Gamma_ {\theta} ^ {(j)} }$, which is equal to that of $T^{ \Gamma_ {\theta} ^ {(1)} }$
 (see Proposition \ref{finite}). It follows that for $j \gg
0$ the map $T^{ \Gamma_ {\theta} ^ {(j+1)} }\to T^{ \Gamma_ {\theta} ^ {(j)} }$ is an isomorphism.
 By
Proposition \ref{jacoiso}, this variety is smooth. The second assertion follows directly from the definitions.
 \hfill $\ {\Box}$

\begin{remark}  By Proposition \ref{stable}
the stability problem is equivalent to the problem of determining if the iteration of the
blowing ups of logarithmic jacobian ideals
eventually resolves the singularities of the toric variety $T^\Gamma$.
\end{remark}

\begin{proposition} \label{prev}
Given $0 \ne \nu \in \s \cap N$,   for any $j \geq 1$ there exists a unique cone $\theta^{(j)} \in \Sigma^{(j)}$ such that
$\nu \in \int(\theta^{(j)} )$.  Then for $j \gg 0$ we have:
\begin{enumerate}
\item[{\rm i.}] If $q_0 = \codim \theta^{(j-1)}$ and if $m^{(j)} \in \J^{(j)}_{\theta^{(j-1)}}$ is such that
$\ord_{\J^{(j)}} (\nu) = \langle \nu , m^{(j)} \rangle$, then for any representation
\begin{equation}      \label{repre}
m^{(j)} = \g_1^{(j)} + \cdots + \g_d^{(j)}
\end{equation}
as a sum of linearly independent elements in the
semigroup $\Gamma_{\theta^{(j-1)}}^{(j)}$, there are exactly
$q_0$ of them in $ M(\theta^{(j-1)})$.
\item[{\rm ii.}]  $M(\theta^{(j-1)} , \Gamma_{\theta^{(j-1)}}^{(j)}) =  M(\theta^{(j-1)}) = M( \theta^{(j)} ) $ (cf. Notation \ref{ind}).
\end{enumerate}
Moreover, the sequence of cones   $(\theta^{(j)})$ stabilizes.
\end{proposition}
\textit{Proof}.   Notice that the vector
$\nu$ is in a subdivision of $\theta^{(j)}$ induced by
$\Sigma^{(j+1)}$, so that we have $\theta^{(j+1)}\subseteq
\theta^{(j)}$.
   The sequence $(\codim_{N_\R}  \ \theta^{(j)} )_j$ is increasing, thus there exists an integer $0 \leq q_0 \leq d-1$
such that  $ \codim  \ \theta^{(j)} = q_0 $, for $j \gg 0$.

Since $\nu \in \int (\theta^{(j)}) \cap N$, we have the equalities
\begin{equation} \label{perp}
 \Gamma_{\theta^{(j-1)}}^{(j)} \cap ({\theta^{(j-1)}})^\perp  = \Gamma_{\theta^{(j-1)}}^{(j)} \cap  \nu^\perp =
   M( \theta^{(j-1)} , \Gamma_{\theta}^{(j)} ).
\end{equation}
The lattice $M( \theta^{(j-1)} , \Gamma_{\theta^{(j-1)}}^{(j)} ) $
is a sublattice of finite index $i ( \theta^{(j-1)} ,
\Gamma_{\theta^{(j-1)}}^{(j)} )$ of $M( \theta^{(j-1)})$ (see
Notation \ref{ind} and Lemma \ref{conditions-aff}). By definition
$M( \theta^{(j-1)} )$ is  a saturated subsemigroup of $M$, and it
is also a rank $q_0$ lattice. By (\ref{essential})  we deduce that
$M( \theta^{(j-1)} ) = M (\theta^{(j)} )$ and $ M( \theta^{(j-1)}
, \Gamma_{\theta}^{(j)} ) \subset M( \theta^{(j)} ,
\Gamma_{\theta}^{(j+1)} )$. Then, the sequence of indices  $( i (
\theta^{(j-1)} , \Gamma_{\theta^{(j-1)} }^{(j)} ))_j$ stabilizes,
hence          $ M( \theta^{(j-1)} , \Gamma_{\theta}^{(j)} ) = M(
\theta^{(j)} , \Gamma_{\theta}^{(j+1)} )$ for $j \gg 0$.
 The
lattice  $ M( \theta^{(j-1)} , \Gamma_{\theta}^{(j)} )$  is
\textit{a priori} a sublattice of finite index
 of $M(\theta^{(j-1)})$.

We deal first with the proof of (i).
Up to relabeling the vectors we can assume that those $\g_i^{(j)}$ appearing in (\ref{repre}),
which belong to $M(\theta^{(j-1)})$ are
 $\g_1^{(j)} \dots, \g_{s}^{(j)}$ for $0 \leq s \leq q_0$. By (\ref{perp})
 these vectors belong to $M(\theta^{(j-1)}, \Gamma_{\theta^{(j-1)}}^{(j)} )$.
Suppose that $s\ne q_0$.
 Since, the images of the $\g_i^{(j)}$,
$i=s+1 , \dots,d$,
  generate a rank $d- q_0$ sublattice of
 $M/M(\theta^{(j-1)})$, we get that $d- q_0$ of them, say for $i=q_0 +1 ,
 \dots,d$, are linearly independent modulo $M(\theta^{(j-1)})$.
We can find vectors $\tilde{\g}^{(j)}_{s+1}, \dots, \tilde{\g}^{(j)}_{q_0}
\in \Gamma_{\theta^{(j-1)}} ^{(j)} \cap \nu^\perp $, such that
$\g_1^{(j)}\y \dots \y \g_s^{(j)} \y \tilde{\g}^{(j)}_{s+1}\y  \dots \y
\tilde{\g}^{(j)}_{q_0} \ne 0$.
 Then the
vector
\[ m' := \g_1^{(j)}+ \dots +  \g_{s}^{(j)}+  \tilde{\g}^{(j)}_{s+1} + \dots +
\tilde{\g}^{(j)}_{q_0}  + \g_{q_0+1}^{(j)} + \dots +  \g_{d}^{(j)}
\]
verifies that  $  m'  \in \J_{\theta^{(j-1)}}^{(j-1)}$. Since $0 =
\langle \nu,  \tilde{\g}^{(j)}_j \rangle <  \langle \nu, {\g}^{(j)}_j
\rangle $, for $ s+1\leq j\leq q_0$
 we would have $\langle \nu, m' \rangle  < \langle \nu,m^{(j)} \rangle$, a contradiction.

Suppose that (ii) does not hold. Then, since $
\Gamma_{\theta^{(j-1)}}^{(j)}$ generates the lattice $M$, there
exist $\g, \g' \in  \Gamma_{\theta^{(j-1)}}^{(j)} $ such that $\g
-\g ' \in M(\theta^{(j-1)}) \setminus
   M(\theta^{(j-1)}, \Gamma_{\theta^{(j-1)}}^{(j)} )
$. In view of (\ref{perp}) we get $0 \ne \langle \nu, \g  \rangle
= \langle \nu, \g' \rangle$. Since $\g_{q_0+ 1}^{(j)}, \dots,
\g_d^{(j)}$ define linearly independent vectors in the lattice $M
/ M(\theta^{(j-1)})$ which is of rank $d - q_0$, there exists an
integer $q_0+ 1 \leq i_0 \leq d$ such that $ \g$ (resp. $\g'$)
together with  $\g_{q_0+ 1}^{(j)} ,\dots,  \g_{i_0-1}^{(j)} ,
\g_{i_0+ 1}^{(j)} , \dots, \g_d^{(j)} $ are linearly independent
modulo $M(\theta^{(j-1)})$. Suppose without loss of generality
that  $i_0 = d$. Then the vectors $ \tilde{\g} :=  \g + \sum_{i =
1}^{d-1} \g_i^{(j)}  $ and $ \tilde{\g}' :=  \g' + \sum_{i =
1}^{d-1} \g_i^{(j)} $ belong to $\J_{\theta^{(j-1)}}^{(j)} $,
hence $\tilde{\g} -m^{(j)} = \g - \g_d^{(j)}$ and $\tilde{\g}
'-m^{(j)} = \g' - \g_d^{(j)}$  are both elements of
$\Gamma_{\theta^{(j)}}^{(j+1)} $. Since $\langle \nu,
\g_d^{(j)} \rangle  > 0$,
 it follows that $
\tilde{\g} - \tilde{\g}' = \g - \g' $ and $ \langle \nu,
\tilde{\g} \rangle  = \langle \nu, \tilde{\g}' \rangle  < \langle
\nu, {\g} \rangle  = \langle \nu, {\g}' \rangle$. By repeating
this construction, since $\nu\in \s\cap N$, in a finite number of
steps we reduce to the case when   $\langle \nu, \g \rangle =
\langle \nu, \g' \rangle     = 0 $.
 As we remarked before this implies that $\g - \g' \in  M(\theta^{(j-1)}, \Gamma_{\theta^{(j-1)}}^{(j)} )$, a contradiction.

By Lemma \ref{qzero} and ii. we can assume that $q_0 = 0$. By
(\ref{essential2})  the sequence  of positive integers  $( \langle
\nu, m^{(j)} \rangle)_{j \geq 1} $ is decreasing, hence it
stabilizes.
 Suppose that the sequence $(\theta^{(j)})$ does not
stabilize. By Proposition \ref{stable} this implies that the toric
variety $T^{\Gamma_{\theta^{(j-1)}}^{(j)}}$ is not smooth for any
$j\geq 1$.

 Let us fix an integer $j_1 \gg 0$. By  Proposition
\ref{bound} there exists a
 smallest
 integer $j_2 > j_1$ such that $m^{(j_2)} \ne m^{(j_1)}$.
We consider a representation of $m^{(j_2)} \in \J_{\theta
^{(j_2-1)}}^{(j_2) }$ of the form (\ref{repre}). The vector
$m^{(j_1)} - m^{(j_2)}$  belongs to the semigroup $\Gamma_{\theta
^{(j_2)}}^{(j_2 + 1) }$  since   $m^{(j_1)} \in \J_{\theta
^{(j_2-1)}}^{(j_2) }$ by (\ref{essential}). We obtain that
$\langle \nu, m^{(j_1)} - m^{(j_2)} \rangle =0$, hence $ m^{(j_1)}
- m^{(j_2)} \in \Gamma_{\theta ^{(j_2)}}^{(j_2 + 1) } \cap
\nu^\perp$. Since $\nu \in \int \theta^{(j_2)}$ and $\dim
\theta^{(j_2)} = d$ we deduce also that $\Gamma_{\theta
^{(j_2)}}^{(j_2 + 1) } \cap \nu^\perp = \{ 0 \}$, a contradiction,
which ends the proof of Proposition \ref{prev}. \hfill $\ {\Box}$
\par\medskip

If $\eta \subset N_\R$ is a rational convex polyhedral cone the
duality between the lattices $N$ and $M$ induces a duality between
the lattices $N_\eta$ and $M_\eta = M/ M(\eta)$ and also a duality
between $N(\eta) = N/ N_\eta$ and $M(\eta) = M \cap \eta^\perp$
(cf. Notations \ref{ind}).

If $0 \ne \eta$ is a stable cone one can consider for $j \gg 0$
the orbit closure $T^{\Gamma^{(j)}}_{\Sigma^{(j-1)}} (\eta) $
associated to $\eta \in \Sigma^{(j-1)}$. By convenience we recall the notations to
describe this toric variety in this case. See Lemma
\ref{orbit-closure} and Notations \ref{ind}.
\begin{notation} \label{clo-st} If $\eta\in \Sigma^{(j-1)}$ is a stable cone the variety
$T^{\Gamma^{(j)}}_{\Sigma^{(j-1)}} (\eta)$ is covered by charts
defined by the semigroups $\Gamma^{(j)}_{\theta^{(j-1)}} \cap
\eta^\perp$, for $\eta \leq \theta^{(j-1)}$ and $\theta^{(j-1)}
\in \Sigma^{(j-1)}$.  Notice that the semigroup
$\Gamma^{(j)}_{\theta^{(j-1)}} \cap \eta^\perp$ is a face of
$\Gamma^{(j)}_{\theta^{(j-1)}}$, and it spans the lattice $M(\eta,
\Gamma_\eta^{(j)}) = \Gamma^{(j)}_\eta \cap \eta^\perp$ by Lemma
\ref{conditions-aff}. By Proposition \ref{prev} ii. this lattice
is equal to $M(\eta)$ if $j \gg 0$. The fan $\Sigma^{(j-1)}
(\eta)$ consists of the images $\theta^{(j-1)} ( \eta) $ of cones
$\theta^{(j-1)} \in \Sigma^{(j-1)}$ in $N(\eta)_\R$.  The cone
$\theta^{(j-1)} (\eta)$ is the dual cone of $\R_{\geq 0} (
\Gamma^{(j)}_{\theta^{(j-1)}} \cap \eta^\perp)$. We denote by
$\J^{(j)}_{\theta^{(j-1)}} ( \eta ) $ the logarithmic jacobian
ideal of $k [ t^{\Gamma^{(j)}_{\theta^{(j-1)}} \cap \eta^\perp
}]$.
\end{notation}

The following technical lemma will be useful.
\begin{lemma}      \label{basis}
Let $0 \ne \eta$ be a stable cone of codimension $d_0 < d$. We
denote by
\[ \p: M_\R \to M_\R / M(\eta)_\R, \quad \a \mapsto \tilde{\a} \] the canonical
projection. If $ (\theta^{(j)})_{j}$ is a sequence  such that
\begin{equation}           \label{cond}
          \theta^{(j)} \in
\Sigma^{(j)}, \ \theta^{(j)} \supset \theta^{(j+1)}  \  \mbox{ and
}  \ \eta \leq  \theta^{(j)},
\end{equation}
then  for $j \gg 0$ we have:
\begin{enumerate}
\item[i.]
$\p ( \Gamma_{\theta^{(j-1)}}^{(j)} ) = \p (
\Gamma_{\theta^{(j)}}^{(j+1)}    )$
 and
the semigroup $\tilde{\Gamma}_\eta := \p (
\Gamma_{\theta^{(j-1)}}^{(j)}    )  $ is generated by a basis
$\tilde{e}_{d_0 +1} , \dots, \tilde{e}_{d}$ of $M/M(\eta)$.

\item[ii.] If $m^{(j)}$ is the vertex of the polyhedron
$\mathcal{N} (\J^{(j)}_{\theta^{(j-1)}})$ such that
\begin{equation} \label{numj}
\ord_{{\J}^{(j)}} (\nu) = \langle \nu, m^{(j)} \rangle, \quad
\forall  \nu \in \theta^{(j)}, \end{equation} then for any
representation of the form (\ref{repre}) of $m^{(j)}$ as a sum of
linearly independent vectors in $\Gamma^{(j)}_{\theta^{(j-1)}}$
then exactly $d_0$ of the $\g_i^{(j)}$ belong to
$\Gamma^{(j)}_{\theta^{(j-1)}}\cap\eta^\perp$, say for $i =1, \dots, d_0$, while
$\langle \nu_\eta, \g_i^{(j)} \rangle = 1$ for $i =d_0 +1, \dots,
d$.

\item[iii.] The vector $m^{(j)}$ belongs to the face
$\mathcal{F}_{\theta^{(j-1)}}^{(j)}$ of $\mathcal{N}
(\J^{(j)}_{\theta^{(j-1)}})$ determined by $\nu_\eta$ (cf.
Definition \ref{def-sta}). The face
$\mathcal{F}_{\theta^{(j-1)}}^{(j)}$ is the Minkowski sum
\[ \mathcal{N} (\J^{(j)} _{\theta^{(j-1)}} (\eta) ) +
\mathcal{P}^{(j)}_{\theta^{(j-1)}},\]
 where
$\mathcal{P}^{(j)}_{\theta^{(j-1)}}$ is the convex hull of the set
$ \bigcup \, \delta_{i_{d_0 +1}}^{(j)}  + \cdots + \delta_{i_{d}}^{(j)}
\, + \, (\check\theta^{(j-1)}
  \cap \eta^{\perp})$,
for    $\delta_{i_{d_0 +1}}^{(j)} , \dots, \delta_{i_{d}}^{(j)} \in
\Gamma_{\theta^{(j-1)}}^{(j)}$ such that
$
       \delta_{i_{d_0 +1}}^{(j)} \y \cdots \y \delta_{i_{d}}^{(j)} \ne 0$,   and   $\langle \nu_\eta, \delta_{i_{l}}^{(j)} \rangle =
       1 $ for $l=d_0+1,\ldots, d $.
\end{enumerate}
\end{lemma}
\textit{Proof}. Since
$\eta$ is a stable cone we get that $M(\eta) = M(\eta,\Gamma_\eta^{(j)})$,
by applying Proposition \ref{prev} to the constant sequence of cones $\eta_j :=\eta$.
By Lemma \ref{qzero} the cone $\eta \subset (N_\eta)_\R$ is a stable cone for the semigroup
$     \p ( \Gamma_{\eta}^{(j)}    ) $ for $j \geq j_0 \gg 0$.  By Proposition \ref{stable}
the sequence of semigroups $(\p ( \Gamma_{\eta}^{(j)}    ))_{j\geq j_0}$ stabilizes and
$\p ( \Gamma_{\eta}^{(j)}    )$  is generated by a basis of $M/M(\eta)$,
for $j \gg 0$.
By Lemma \ref{conditions-aff}
we have that
$ \Gamma_\eta^{(j)} = \Gamma_{\theta^{(j-1)}}^{(j)}  + \Z_{\geq 0} (-u_j) $, for any
 $u_j \in \Gamma_{\theta^{(j-1)}}^{(j)}$  which belongs to $\int (\check{\theta}^{(j-1)} \cap \eta^\perp)$.
We deduce that   $\p ( \Gamma_{\theta^{(j-1)}}^{(j)}    )  = \p (
\Gamma_{\eta}^{(j)}) $ for $j \gg 0$.

Since $\eta \leq \theta^{(j)}$ we get from (\ref{numj}) that
$\ord_{\J^{(j)}} (\nu_\eta) = \langle \nu_\eta, m^{(j)} \rangle$.
This implies that $m^{(j)}$ belongs to
$\mathcal{F}_{\theta^{(j-1)}}^{(j)}$. Notice also that $m^{(j)}$
belongs to $\J_{\eta}^{(j)}$. By Proposition \ref{prev} i., for
any representation of $m^{(j)}$ of the form (\ref{repre}), $d_0$
of the $\g_i^{(j)}$ belong to $\eta^{\perp}$, say for $i =1,
\dots, d_0$. By Lemma \ref{induced} the vectors $\p (\g^{(j)}_i) =
\tilde{\g}^{(j)}$, $i =d_0 +1, \dots, d$ are linearly independent
elements of $\Gamma_\eta$, such that $\tilde{m}^{(j)} = \sum_i
\tilde\g^{(j)}_i $ belongs to the logarithmic jacobian ideal $
\tilde{\J}_\eta$ of $k[t^{\tilde{\Gamma}_\eta}]$. We know that
$\langle \nu, m \rangle = \langle \nu, \tilde{m} \rangle$ for any
$m \in M$ and $\nu \in \eta$,  thus the vector $\tilde{m}^{(j)}$
is in the face of $\mathcal{N} ( \tilde{\J}_\eta)$ determined by
$\nu_\eta$. By i. the semigroup $\tilde{\Gamma}_\eta$ is regular,
hence we deduce that $\tilde{m}^{(j)} = \sum_{i=d_0+1}^d
\tilde{e}_i$ and, up to relabeling,  $\tilde{e}_i =
\tilde{\g}^{(j)}_i$, $i = d_0 +1, \dots,d$. This ends the proof of
ii.~ and also shows that $m^{(j)} \in \mathcal{N} (\J^{(j)}
_{\theta^{(j-1)}} (\eta) ) + \mathcal{P}^{(j)}_{\theta^{(j-1)}}$.

We have also shown the equalities
\[ \ord_{\J^{(j)}} ( \nu_\eta) = \langle
\nu_\eta, m^{(j)} \rangle = \langle \nu_\eta, \tilde{m}^{(j)}
\rangle = d-d_0.  \] Finally, we remark that the argument given
above applies more generally for any vector $m \in
\J^{(j)}_{\theta^{(j-1)}}$ in the face
$\mathcal{F}^{(j)}_{\theta^{(j-1)}}$. This implies that
iii.~holds. \hfill $\ {\Box}$

\begin{proposition} \label{uno}
If $0 \ne \eta$ is a stable cone and if  $ (\theta^{(j)})_{j}$ is a sequence of cones of the form (\ref{cond})
such that $ \theta^{(j)} $ contains $\eta$ as a face of codimension one,  then
\begin{equation} \label{ret}
M(\theta^{(j-1)}, \Gamma_{\theta^{(j-1)}} ^{(j)} ) = M (\theta^{(j-1)}) \mbox{ for } j \gg 0,
\end{equation}
and the sequence of cones $(\theta^{(j)})_{j
\geq I}$ stabilizes.
\end{proposition}
\textit{Proof}.  We denote by $q_0$ the integer $\codim_{N_\R}  \ \theta^{(j)}$ for $j \gg 0$ and
by $d_0$ the codimension of $\eta$.
Notice that $0 \leq q_0 < d-1$ since $0 \ne \eta \leq \theta^{(j)}$.

We deal first with the proof of (\ref{ret}). By the argument given
in the proof of Proposition  \ref{prev} we get $M(\theta^{(j-1)} )
= M(\theta^{(j)})$ and $M(\theta^{(j-1)} ,
\Gamma^{(j)}_{\theta^{(j-1)}}) = M(\theta^{(j)} ,
\Gamma^{(j+1)}_{\theta^{(j)}})$ for $j \gg 0$.
 We recall that for $j \gg 0$
the lattice spanned by the face $\Gamma^{(j)}_{\theta^{(j-1)}}
\cap \eta^\perp$ is equal to $M(\eta)$ (see Notations
\ref{clo-st}).

If (\ref{ret}) were not true then there exists $\g, \g' \in   \Gamma^{(j)}_{\theta^{(j-1)}}$ such that
$\g - \g' \in   M (\theta^{(j-1)}) \setminus  M(\theta^{(j-1)}, \Gamma_{\theta^{(j-1)}} ^{(j)} )$. Notice then that
$\langle \nu_\eta, \g \rangle = \langle \nu_\eta, \g' \rangle $ since $M (\theta^{(j-1)}) \subset M(\eta)$ by duality.

By Lemma \ref{basis}, if  $m^{(j)} \in \J_{ \theta^{(j-1)}}^{(j)}$
is such that $\ord_{\J^{(j)}} (\nu) = \langle \nu, m^{(j)}
\rangle$ for any $\nu \in \theta^{(j)}$ then for any
representation of $m^{(j)}$ of the form (\ref{repre}),  $d_0$ of
the $\g_i^{(j)}$ belong to $M(\eta)$.

By applying the argument of Proposition \ref{prev} we can assume,
replacing $j$ by a bigger number, that $\langle \nu_\eta, \g
\rangle  =0$, that is $\g$ and $\g'$ belong to the face $
\Gamma^{(j)}_{\theta^{(j-1)}} \cap \eta^\perp$.

By hypothesis $\eta \leq \theta^{(j-1)}$ is a face of codimension
one. The image of
 $\check{\theta}^{(j-1)} \cap \eta^\perp$    in $M(\eta)_\R / M(\theta^{(j-1)})_\R$ is the dual cone of
the image   $\bar{\theta}^{(j-1)}$  of $\theta^{(j-1)}$  in $ (N_{\theta^{(j-1)})_\R} / (N_\eta)_\R$.
Notice that the lattice  $N_{\theta^{(j-1)}} / N_{\eta}$  and the cone $\bar{\theta}^{(j-1)}$
  are independent of $j$ for
$j\gg 0$.
We denote by  $\bar{\nu}$ the generator of the semigroup $\bar{\theta}^{(j-1)} \cap (N_{\theta^{(j-1)}} / N_{\eta})$
and by $\bar{\a}$ the class of $\a \in M(\eta)$ modulo $M(\theta^{(j-1)})$.

Among those $\g_i^{(j)}$ which belong to $\eta^{\perp}$ there exists at least one, say $\g_d^{(j)}$, which does not belong to
$M(\theta^{(j-1)})$, hence $ \langle \bar{\nu}, \bar{\g}_d^{(j)} \rangle \ne 0$. Notice also that
$0 \ne \langle \bar{\nu}, \bar{\g} \rangle =  \langle \bar{\nu}, \bar{\g}' \rangle$. Then,
we apply the same argument as in
the proof of Proposition \ref{prev} i. to get that  $\g - \g_d^{(j)}$ and $\g' - \g_d^{(j)}$ belong to
$\Gamma_{\theta^{(j)}}^{(j+1)} \cap \eta^\perp$ and $\langle \bar{\nu},  \bar{\g} - \bar{\g}_d^{(j)} \rangle < \langle
\bar{\nu}, \bar{\g} \rangle$. By iterating this procedure,  replacing $\g$ and $\g'$ by $\g - \g_d^{(j)}$ and
$\g - \g_d^{(j)}$, respectively, we reduce to the case $0 =
\langle \bar{\nu}, \bar{\g} \rangle  =   \langle \bar{\nu}, \bar{\g}' \rangle$. But this implies that
$\g, \g'$ belong to $M( \theta^{(j-1)}, \Gamma^{(j)}_{\theta^{(j-1)}} )$, contradicting  the hypothesis.
This ends the proof of the equality
(\ref{ret}).

\medskip

We prove now that the sequence $(\theta^{(j)})$ stabilizes. Since
(\ref{ret}) holds, by Lemma \ref{qzero} we can assume that $q_0 =
0$. Then, by hypothesis the lattice $M(\eta)$ is of rank one. The
cone $\check\theta^{(j-1)}\cap \eta^{ \perp} \subset M(\eta)_\R$
is a one dimensional face  of $\check\theta^{(j-1)}$ for all $j
\gg 0$. Since $\check{\theta}^{(j-1)} \subset
\check{\theta}^{(j)}$ we get that the cone $\check\vartheta:=
\check\theta^{(j-1)}\cap \eta^{ \perp}$ is independent of $j$, for
$j \gg 0$.

With notations of Lemma \ref{basis}, the Minkowski sum $\p^{-1}
(\tilde{e}_i) \cap \Gamma^{(j)}_{\theta^{(j-1)}} +
\check\vartheta$ is an affine one dimensional cone with only one
vertex $\g_i^{(j)}$, which belongs to
$\Gamma^{(j)}_{\theta^{(j-1)}}$, for $i=2, \dots, d$. We denote
by $\g_1^{(j)}$ the smallest generator of the rank one semigroup
$\Gamma^{(j)}_{\theta^{(j-1)}}\cap \check\vartheta$. Notice that
$\g_1^{(j)}$ is independent of $j$ for $j \gg 0$.

By Lemma \ref{basis} the vector $m^{(j)} = \g_1^{(j)} + \cdots +
\g_{d}^{(j)} \in \J^{(j)}_{\theta^{(j-1)}}$  is the unique vertex
of the face $\mathcal{F}^{(j)}_{\theta^{(j-1)}} = m^{(j)}+
\check\vartheta$. Since $\nu_\eta \in \theta^{(j)}$ the cone
$\theta^{(j)}$ is dual to the cone spanned by $\{ \g - m^{(j)}
\mid \g \in \J^{(j)}_{\theta^{(j-1)}} \}$.

\begin{claim}\label{oneorbit} The semigroup
$\Gamma^{(j)}_{\theta^{(j-1)}} \cap \eta^\perp $  is regular for $j \gg 0$.
 \end{claim}
\textit{Proof of the claim.}  By Proposition \ref{prev} the
semigroup $\Gamma^{(j)}_{\theta^{(j-1)}}\cap\eta^{ \perp} =
\Gamma^{(j)}_{\theta^{(j-1)}}\cap \check\vartheta$
 generates the  group
$M(\eta)$,  for $j \gg 0$. If it has only one generator the result
follows directly. Assume that it has at least two generators. Let
us denote by $\d > \g_1^{(j)}$ the second element by order of
size. The element $n^{(j)}= \gamma^{(j)}_2+\cdots
+\gamma^{(j)}_1+ \delta$ belongs to
$\J^{(j)}_{\theta^{(j-1)}}$ so that $n^{(j)}-m^{(j)}=\delta
-\gamma_1^{(j)}$ is in $ \Gamma^{(j+1)}_{\theta^{(j)}}$. We see
that after finitely many steps the smallest generator of our
semigroup has decreased, so in the end we reach the generator of
the regular semigroup $\check\vartheta \cap M$, which proves the
result. This is similar to the resolution process for one
dimensional affine toric varieties by Semple-Nash
modifications.\hfill $\ {\Box}$

By Lemma \ref{basis} and Claim \ref{oneorbit} the elements
$\g_1^{(j)}, \dots, \g_d^{(j)}$ form a basis of $M$.

The expansion of $\g \in \Gamma^{(j)}_{\theta^{(j-1)}}$  in terms of
the basis  $\g_1^{(j)}, \dots, \g_{d}^{(j)}$ is of the form
\begin{equation} \label{exp-gamma}
\g = a_1\g_1^{(j)} + \cdots + a_d \g_d^{(j)},
\end{equation}
where $a_i \in \Z_{\geq 0}$ and $a_d \in \Z$. If $\g$ is of the
form (\ref{exp-gamma}) then we have $\langle \nu_\eta, \g \rangle
= \sum_{i=2}^{d} a_i$. In particular, we get that
$\ord_{\J^{(j)}} (\nu_\eta) = \langle \nu_\eta, m^{(j)} \rangle =
d-1$.

We denote by $G^{(j)}$ the minimal generating system of the
semigroup $\Gamma^{(j)}_{\theta^{(j-1)}}$, and by   $g^{(j)}$ the
maximum of $\nu_\eta$ on the set $G^{(j)}$.

Notice that the elements $\g_1^{(j)}, \dots, \g_d^{(j)}$ belong to
$G^{(j)}$ by our assumptions.  If a generator $\g \in \G^{(j)}$ is different
from $\g_1^{(j)}, \dots, \g_d^{(j)}$ then the coefficient $a_1$
 in (\ref{exp-gamma}) is $< 0$, and $\langle \nu_\eta, \g \rangle  \geq 2$. Otherwise $\g$ would
be in the semigroup generated by the $\g_i^{(j)}$, contradicting
the minimality of the generating system $G^{(j)}$. We deduce that
the equality $g^{(j)} = 1$  implies that $G^{(j)} = \{ \g_i^{(j)}
\}_{i=1}^d$, hence in this case the semigroup
$\Gamma^{(j)}_{\theta^{(j-1)}}$ is regular.

\begin{claim}    \label{hache}
If $g^{(j)} > 1$ there exists an integer $t_0 \geq 1$ such that
$g^{(j+t_0)} < g^{(j)}$.
\end{claim}
\textit{Proof of the claim}.
By Lemma \ref{repre-gen} the semigroup
$\Gamma_{\theta^{(j)}}^{(j+1)}$   is generated by $\g_1^{(j)}, \dots, \g_d^{(j)}$ and
vectors of the form $\g - \g_l^{(j)}$ , where $\g \in G^{(j)}$, $\g \ne \g_l^{(j)}$
and the $l$-th coordinate of $\g$ in the basis $\g_1^{(j)}, \dots, \g_d^{(j)}$ is non-zero
(in particular $\langle \nu_\eta, \g \rangle \geq 2$). We say that vectors of this form are
 \textit{followers} of $\g$.
Notice that $\g - \g_1^{(j)}$ is always a follower of $\g$ with
$\langle \nu_\eta, \g \rangle  = \langle \nu_\eta,  \g -
\g_1^{(j)} \rangle$, while $\langle \nu_\eta, \g - \g_l^{(j)}
\rangle  < \langle \nu_\eta, \g \rangle$ if $l \ne 1$.

We deduce  also that $m^{(j)} \ne m^{(j+1)}$ if and only if there
is an element $\g \in G^{(j)}$ with $\langle \nu_\eta, \g \rangle
= 2$. Indeed, if $\langle \nu_\eta, \g \rangle  = 2$ then there is
$2 \leq i \leq d$ such that $\g - \g_i^{(j-1)}$ is a follower of
$\g$ with $\langle \nu_\eta,  \g - \g_i^{(j-1)} \rangle =1$.  This
implies that $\g_i^{(j)}$ is in the semigroup generated by $\g -
 \g_i^{(j)}$ and $\g_d^{(j)}$, thus $\g_i^{(j)}$ does not belong
 to $G^{(j+1)}$ hence $\g_i^{(j)} \ne \g_i^{(j+1)}$ and $m^{(j)}
 \ne m^{(j+1)}$. The converse is deduced similarly.

Assume that $m^{(j)} = m^{(j+1)} = \cdots  = m^{(j+t_0 -1)}$ for
some $t_0 \geq 1$. Let $\g \in G^{(j)}$  be such that $\langle
\nu_\eta, \g \rangle  = g^{(j)}$. If $\g$ is of the form
(\ref{exp-gamma}) the followers of $\g$ after at most  $t_0$
iterations are $\g_\a = \sum_{i=1}^d (a_i - \a_i) \g_i^{(j)}$,
where $\a= (\a_1, \dots, \a_d) \in \Z_{\geq 0}^d$ verify that
$\sum_{i=1}^{d} \a_i \leq t_0$, $\a_i \leq a_i$ for $i=2, \dots,
d$ and $\langle \nu_\eta, \g_\a \rangle = \sum_{i=2}^{d} (a_i
-\a_i) \geq 1$. Those $\g_\a$ with $\langle \nu_\eta, \g_\a
\rangle = \langle \nu_\eta, \g \rangle$ are precisely $\g - l
\g_1^{(j)}$ for $0 \leq l \leq t_0$. The elements  $\g - l
\g_1^{(j)}$, $l =0, \dots, t_0 -1$ do not belong to $G^{(j+t_0)}$
since they are in the semigroup generated by $\g_1^{(j)}$ and $\g
- t_0 \g_1^{(j)}$.

Assume that $t_0
> 1$ is  the smallest integer such that $m^{(j)} \ne m^{(j+t_0)}$.
Notice that  $t_0 \leq  g^{(j)} -1$, otherwise we would get a
follower $\g_\a$ of $\g$ with $\langle \nu_\eta, \g_\a \rangle  =
1$, which is necessarily different from the  $\g_i^{(j)}$, $i= 2,
\dots, d$, and then $m^{(j)} \ne m^{(j+ g^{(j)} - 1 )}$, a
contradiction.

There exists $0 \ne (p_2, \dots, p_d) \in \Z^{d-1}_{\geq 0}$
such that
\[
\g_i^{(j + t_0 ) } = \g_i^{(j)} - p_i \g_1^{(j)}, \ i =2, \dots,
d, \  \mbox{ and } \ \g^{(j+ t_0)}_1  = \g^{(j)}_1.
\]
We deduce the expansion
\begin{equation} \label{g-alpha}
\g_\a = \sum_{i=2}^{d} (a_i - \a_i) \g_i^{(j + t_0 ) } + (a_1 -
\a_1  + \sum_{i=2}^{d} p_i (a_i - \a_i) ) \g_1^{(j + t_0 ) }.
\end{equation}

If $a_i p_i = 0$ for $i=2, \dots, d$ we iterate this procedure
replacing $\g$ by $\g - t_0 \g_1^{(j)}$. In at most $\langle
\nu_\eta, \g \rangle -1$ steps we get to the situation where at
least one of the $a_i p_i$ is non-zero, say $a_d p_d \ne 0$ for
simplicity. We prove that $\a_0 = (t_0 -1, 0, \dots, 0, 1)$
defines a follower of $\g$ such that $\g - t_0 \g_1^{(j)}$ belongs
to the semigroup generated by $\g_{\a_0} $ and $\g_i^{(j + t_0 )
}$, $i=1, \dots, d$, in particular $\g - t_0 \g^{(j)}_1$ does not
belong to $G^{(j+t_0)}$. We check this assertion by verifying that
the coefficient of the term $\g_i^{(j+t_0)}$ in the expansion
(\ref{g-alpha}) of $\g_{\a_0}$ is less than or equal to the
corresponding coefficient in the expansion (\ref{g-alpha}) of $\g
- t_0 \g_1^{(j)}$ for $i=1, \dots, d$. If $i =d$ we get a strict
inequality. The inequality is trivial for $i=2, \dots, d-1$. If
$i= 1$ we have to show the inequality:
\[
a_1 - (t_0 -1) - p_d + \sum_{i=2}^{d} p_i a_i \leq a_1 - t_0 +
\sum_{i=2}^{d} p_i a_i.
\]
This inequality is equivalent to  $p_d \geq 1$, which holds since
$a_d p_d \ne 0$.

Since there is a finite number of $\g \in G^{(j)}$ with $\langle
\nu_\eta, \g \rangle = g^{(j)}$ there exists an integer $t_2 \geq
0$ such that $g^{(j+t_2) } < g^{(j)}$ as claimed.
 \hfill $\ {\Box}$

Using this claim and induction there exists an integer $t_1 \geq
1$ such that $g^{(j+t_1)} = 1$, hence the semigroup
$\Gamma^{(j)}_{\theta^{(j-1)}}$ is regular. This ends the proof of
Proposition \ref{uno}. \hfill $\ {\Box}$

\begin{definition}
  The nested sequence of cones $(\theta^{(j)})_j$ is
\textit{distinguished} if there exist a stable cone $0 \ne
\eta$, an integer $I $, and   a sequence of faces of
$\theta^{(j)}$ for $j \geq I$,
\begin{equation} \label{faces}
\eta = \z_0^{(j)}  \leq \z_1^{(j)} \leq \cdots \leq \z_{l_0}^{(j)}
= \theta^{(j)},
\end{equation}
such that $\dim \z_i^{(j)} = \dim \eta + i$ and $\z_i^{(j)}
\supset \z_i^{(j+1)}$ for $i =0, \dots, l_0$ and $l_0 \leq \codim
\eta$.
\end{definition}

 \begin{proposition}\label{fin}
If the sequence $ (\theta^{(j)})_j$ is distinguished then it
stabilizes.
 \end{proposition}
\textit{Proof}. Let $(\theta^{(j)})_j$ be a distinguished sequence
as above. We consider the sequence of faces $(\z_1^{(j)})_j$.
Since $\eta $ is a face of codimension one of $\z_1^{(j)}$ we get
that the sequence $(\z_1^{(j)})_j$ stabilizes by Proposition
\ref{uno}. We proceed replacing $\eta = \z_0^{(j)}$ and
$\z_1^{(j)}$ by $\z_1^{(j)}$ and $\z_2^{(j)}$ respectively, in the
previous argument, and then the result follows by induction on the
length $l_0$ of the sequence (\ref{faces}).
 \hfill $\ {\Box}$

\begin{remark} \label{inclu}  If $0 \ne \eta$ is a stable cone, for $j \gg 0$,
the blowing up of the logarithmic
jacobian ideal induces a proper toric modification $\xi_\eta^{(j)}
: T^{\Gamma^{(j+1)}}_{\Sigma^{(j+1)}} (\eta) \to
T^{\Gamma^{(j)}}_{\Sigma^{(j)}} (\eta)$.
 As we explain below, the map $\xi_\eta^{(j)}$  is not
necessarily equal to the
 blowing up of the logarithmic jacobian ideal of
 $    T^{\Gamma^{(j)}}_{\Sigma^{(j)}}  (\eta)$.
\end{remark}

\begin{proposition}     \label{orbitq}
Given a stable cone $\eta$ of codimension $d_0$, a vector $0 \ne w
\in N(\eta)$ and a sequence of cones $\theta^{(j)} (\eta) \in
\Sigma^{(j)} (\eta)$ such that $\theta^{(j)} (\eta) \supset
\theta^{(j+1)} (\eta)$ for $j \gg 0$.  Then, if $w \in \int (
\theta^{(j)} (\eta) )$ for $j \gg 0$, we have that
\begin{equation} \label{ret2}
M(\theta^{(j-1)} (\eta) , \Gamma_{\theta^{(j-1)}} ^{(j)}  \cap
\eta^\perp ) = M (\theta^{(j-1)} (\eta) ) \mbox{ for } j \gg 0,
\end{equation}
and the sequence $\theta^{(j)} (\eta)$ stabilizes.
\end{proposition}
\textit{Proof}. We use notations \ref{clo-st}. We denote by $q_0$
the codimension of $\theta^{(j)} (\eta)$ for $j\gg 0$. If
$\theta^{(j-1)} \in \Sigma^{(j-1)}$ is a cone such that  $\eta
\leq  \theta^{(j-1)} $ and its image in $N(\eta)$ is equal to
$\theta^{(j-1)} (\eta)$ then by definition we get
$M(\theta^{(j-1)} (\eta) , \Gamma_{\theta^{(j-1)}} ^{(j)} \cap
\eta^\perp ) = M(\theta^{(j-1)} , \Gamma_{\theta^{(j-1)}} ^{(j)}
\cap \theta^{{(j-1)}\perp} ) \subset M( \eta, \Gamma_{\eta}
^{(j)}) $ and $ M ( \theta^{(j-1)} (\eta))  = M (\theta^{(j-1)})
\subset
 M (\eta)$.

We get that $M(\theta^{(j-1)} (\eta) , \Gamma_{\theta^{(j-1)}}
^{(j)} \cap \eta^\perp ) = M(\theta^{(j)} (\eta) ,
\Gamma_{\theta^{(j)}} ^{(j+1)}  \cap \eta^\perp )$ and  $ M
(\theta^{(j-1)} (\eta) )  = M (\theta^{(j)} (\eta) )$ for $j \gg
0$. Then, the proof of formula (\ref{ret2}) follows similarly as
that of (\ref{ret}) and of Proposition \ref{prev} ii.

We deduce from (\ref{ret2}) and Lemma \ref{basis} that it is
enough to prove the result in the case $\codim \theta^{(j)} (\eta)
= 0$. In this case, the cone $\theta^{(j-1)} (\eta)$ is the image
of a cone $\theta^{(j-1)} \in \Sigma^{(j)} (d)$ by the projection
$\p \colon N_\R \to N(\eta)_\R$.

The charts of the blowing up of the logarithmic jacobian ideal of
$T^{\Gamma^{(j)}}_{\theta^{(j-1)}}$ which are defined by those $d$
dimensional cones which contain $\eta$, are in bijection with the
vertices  of the
 face
$\mathcal{F}^{(j)}_{\theta^{(j-1)}}$  of the Newton polyhedron of
the logarithmic jacobian ideal $\J^{(j)}_{\theta^{(j-1)}}$
determined by the vector $\nu_\eta$.
One of these charts  $T^{\Gamma^{(j+1)}}_{\theta^{(j)}}$ ,
corresponding to a vertex  $m^{(j)}$ of
$\mathcal{F}^{(j)}_{\theta^{(j-1)}}$ is such that the image of the
cone $\theta^{(j)}$ in $N(\eta)$ is equal to the cone
$\theta^{(j)} (\eta)$ in the sequence. 

By Lemma \ref{basis} we can choose a  representation $ m^{(j)}  =  \g_1^{(j)} + \cdots + \g_{d}^{(j)}$ as sum of linearly independent elements of
$\Gamma^{(j)}_{\theta^{(j-1)}}$, 
such that 
\begin{equation} 
\label{repre3}
\langle \nu_\eta, \g_i^{(j)} \rangle = 
\left\{ 
\begin{array}
{ccl}
0 & \mbox{if} &  1 \leq i \leq d_0,
\\ 
1 & \mbox{if} &  d_0 + 1 \leq i \leq d. 
\end{array}
\right.
\end{equation}
Then we have that $ m^{(j)} = \bar{m}^{(j)} +   m'^{(j)}$ where 
\begin{equation}  \label{repre2}
\bar{m}^{(j)} = \g_1^{(j)} + \cdots + \g_{d_0}^{(j)} \mbox{ and }
m'^{(j)} =  \g_{{d_0 +1}}^{(j)} + \cdots + \g_{{d}}^{(j)}. 
\end{equation}
By hypothesis $w \in
N(\eta)$ is in the relative interior of $\theta^{(j)} (\eta)$.
We get that 
$\bar{m}^{(j)}$
is the vertex of the Newton polyhedron of the  logarithmic jacobian
ideal $\J^{(j)}_{\theta^{(j-1)}} (\eta)$ of $k
[t^{\Gamma^{(j)}_{\theta^{(j-1)}}}]$ determined by $w$ (see Lemma  \ref{basis}).

Remark that the chart associated to $\bar{m}^{(j)}$ of the blowing
up of the logarithmic jacobian ideal $\J^{(j)}_{\theta^{(j-1)}}
(\eta)$ of $k [t^{\Gamma^{(j)}_{\theta^{(j-1)}}}]$ is defined by
the semigroup $S^{(j+1)}$ generated by
$\Gamma^{(j)}_{\theta^{(j-1)}} \cap \eta^\perp$ and vectors of the
form $\g - \bar{m}^{(j)}$ for $\g$ in $\J_{\theta^{(j-1)}}^{(j)}
(\eta)$. The inclusion
$S^{(j+1)} \subset \Gamma^{(j+1)}_{\theta^{(j)}} \cap
\eta^\perp$,
may be strict (cf. Remark \ref{inclu}). The reason is the
following. If $\g \in \Gamma^{(j)}_{\theta^{(j-1)}}$ and if $\g \y
\g_1^{(j)} \y \cdots \y \g_{i-1} ^{(j)} \y \g_{i+1}^{(j)} \y
\cdots \y \g_{d}^{(j)} \ne 0$ then $\d := \g + m^{(j)} -
\g_i^{(j)}$ belongs to $\J_{\theta^{(j-1)}}^{(j)}$ and  $\d -
m^{(j)} = \g - \g_i^{(j)} \in \Gamma^{(j+1)}_{\theta^{(j)}}$. If
in addition, $\langle \nu_\eta, \g \rangle = \langle \nu_\eta,
\g_i^{(j)} \rangle$ we obtain that $\g -  \g_i^{(j)}  $ belongs to $
\Gamma^{(j+1)}_{\theta^{(j)}} \cap \eta^\perp$, even if $\g \notin
\eta^\perp$.

Notice that the sequence $(\langle w, \bar{m}^{(j)} \rangle )$
is stationary since $w$ belongs to the
interior of $\theta^{(j-1)} (\eta)$. There exists $j_1\geq 0$, such that the sequence 
$(\langle w, \bar{m}^{(j)} \rangle )$ is constant for $j \geq j_1$.

 If the sequence
$(\theta^{(j)} (\eta))$ does not stabilize then the toric variety
$T^{\Gamma_{\theta^{(j-1)}}^{(j)}}$ is not smooth for any integer
$j$. 
By Proposition
\ref{bound}, if the toric variety
$T^{\Gamma_{\theta^{(j_1-1)}}^{(j_1)}}$ is not smooth then there
is a smallest $j_2 > j_1$ such that $m^{(j_1)} \ne m^{(j_2)}$. 
The vector ${m}  ^{(j_1)} - {m}^
{(j_2)}$,   
 which can be
written as
\[
   {m}  ^{(j_1)} - {m}^ {(j_2)}  = ( \bar{m}  ^{(j_1)} - \bar{m}^ {(j_2)} )  + ( {m'}  ^{(j_1)} - {m'}^ {(j_2)}),
\]
belongs to $\Gamma^{(j_2+1)}_{\theta^{(j_2)}} $. By (\ref{repre3}),  both terms     
$\bar{m}  ^{(j_1)} - \bar{m}^ {(j_2)}$ and      ${m'}  ^{(j_1)} - {m'}^ {(j_2)}$
belong to the face $ \Gamma^{(j_2+1)}_{\theta^{(j_2)}} \cap \eta^\perp$ of  $\Gamma^{(j_2+1)}_{\theta^{(j_2)}} $, 
and at least one of them is non-zero.

If   $\bar{m}  ^{(j_1)} - \bar{m}^ {(j_2)} \ne 0 $ then we get
that $\langle w , \bar{m}  ^{(j_1)} - \bar{m}^ {(j_2)} \rangle  =
0$. This implies that $\bar{m}  ^{(j_1)} - \bar{m}^ {(j_2)} \in
(\Gamma^{(j_2+1)}_{\theta^{(j_2)}} \cap \eta^\perp) \cap w^\perp$,
but since $w  \in \int (\theta^{(j_2)} (\eta))$ the face
$\Gamma^{(j_2+1)}_{\theta^{(j_2)}} \cap \eta^\perp \cap w^\perp$
is reduced  to zero, a contradiction.

We deduce that $\bar{m}^{(j)} = \bar{m}^{(j_1)}$ for all $j \geq
j_1$.  Since $w \in \mathrm{int} (\theta^{(j)} (\eta))$ determines
 the  vertex $\bar{m}^{(j)} $ of the Newton polyhedron of the 
logarithmic jacobian ideal $\J^{(j)}_{\theta^{(j-1)}} (\eta)$, we can assume, up to relabeling,  that 
\begin{equation} \label{repre4}
\g_i^{(j)} = \g_i^{(j_1)}  \mbox{ for } =1, \dots, d_0 \mbox{ and }  j > j_1. 
\end{equation} 

We denote by $\vartheta$ the cone spanned by the vectors $\g_i^{(j_1)}, \dots, \g_{d_0}^{(j_1)}$. 
We prove below that the cone 
$\R_{\geq 0} ( \Gamma^{(j)}_{\theta^{(j-1)}} \cap \eta^\perp)$ is equal to $\vartheta$ for any $j \geq j_1$.  
This implies that the sequence of cones $(\theta^{(j-1)} (\eta))$   stabilizes since 
$\R_{\geq 0} ( \Gamma^{(j)}_{\theta^{(j-1)}} \cap \eta^\perp)$ 
is the dual cone of  $\theta^{(j-1)} (\eta)$. 

Otherwise, there exists  a vector $\delta^{(j)}   \in   \Gamma^{(j)}_{\theta^{(j-1)}} \cap \eta^\perp $ such that 
$\delta^{(j)} \notin \vartheta$. Then, we have an expansion of the form:
\[
\delta^{(j)} = a_1  \g_1^{(j_1)} + \cdots + a_{d_0}  \g_{d_0}^{(j_1)} , \mbox{ with } a_i \in \Q, 
\]
and with some $a_i$ less than zero, say $a_1$. Since $\g_1^{(j_1)} = \g_1^{(j)}$  by (\ref{repre4}), we deduce from 
the definitions that $\delta^{(j+1)} := \delta_j - \g_1^{(j)}$ belongs to  $\Gamma^{(j+1)}_{\theta^{(j)}} \cap \eta^\perp$ 
and $\delta^{(j+1)} \notin \vartheta$. By induction we get that  
$\delta^{(j+k)} := \delta_1 - k \g_1^{(j)}$  belongs to  $\Gamma^{(j+k)}_{\theta^{(j+ k-1)}} \cap \eta^\perp$ 
and $\delta^{(j+k)} \notin \vartheta$, for $k >1$. It follows that $\langle w, \delta^{(j+k)} \rangle = \langle w, \delta_1 \rangle - k \langle w, \g_1^{(j)} \rangle$ 
becomes negative for $k \gg 0$. This cannot happen since $w $ belongs to the interior of $\theta^{(j)} (\eta)$, for all $j \gg 0$.
This contradiction  ends the
proof of Proposition \ref{orbitq}.
  \hfill $\ {\Box}$

The statement of the main result below is given in terms of certain orders on the lattice $M$.

\begin{definition} \label{po}
 A relation $\leq$ on the lattice $M$ is said to be a \textit{preorder} if it satisfies the following conditions:
\begin{itemize}
\item For any $m, n\in M$ one has either $m\leq n$ or $n\leq m$.
\item The inequalities $m\leq n$ and $n\leq p$ imply $m\leq p$.
\item If $m\leq n$ holds, then $m+p\leq n+p$ for all $p\in M$.
\end{itemize}
\end{definition}

\begin{definition}   \label{pre-def}
If $N$ is the dual lattice of $M$ and if $\underline{\nu} = (\nu_1, \dots, \nu_s)  \in (N_\R)^s$, for $ s \leq d = \mathrm{rk N}$, 
one can define a preorder $\leq_{\underline{\nu}}$ on $M$ by 
setting for $n, m \in M$ that
\begin{equation} \label{pre-order}
n \leq_{\underline{\nu}}  m 
\Leftrightarrow 
(\langle n, \nu_1\rangle, \dots, \langle n, \nu_s\rangle) \leq_{lex} (\langle m, \nu_1\rangle, \dots, \langle m, \nu_s \rangle),
\end{equation}
where $\leq_{lex}$ denotes the lexicographical order on $\R^s$.
If   $\underline{\nu} = ( \nu_1, \dots, \nu_d ) \in N^s$ we say that $\leq_{\underline{\nu}}$  
is a \emph{rational preorder}; if $s = d$ and $\underline{\nu}$ is an ordered basis of $N$ then 
the rational preorder $\leq_{\underline{\nu}}$ is an order,
we say in this case that $\leq_{\underline{\nu}}$ is a \emph{rational order}. 
\end{definition}
 If $\leq_{\underline{\nu}}$   is a rational order then the relation (\ref{pre-order}) 
 means that the set of coordinates of $n$ with respect to the dual basis of $\underline{\nu}$ 
 is less than or equal to, for the lexicographic order, the analogous set of coordinates of $m$. If $\nu_1 = \dots = \nu_s =0 $ then the preorder $\leq_{\underline{\nu}}$
 is trivial, that is  $m\leq_{\underline{\nu}}n\ \forall \ m,n\in M$.

\begin{remark} 
A basis $\underline{\nu}$ of $N$ defines similarly a valuation of the field  $k (t^M)$ of functions
of $k[t^M]$, of maximal rank $d$, with values in the totally ordered group $\Z^d$ equipped with the lexicographical
order. This valuation is defined as the monomial valuation whose value on a monomial
$t^\g$ is $ (\langle \nu_1, \g \rangle , \dots, \langle \nu_d,  \g \rangle )$.
\end{remark}

Let ${\underline{\nu}}$ be a basis of the lattice $N$ defining the rational order $\leq_{{\underline{\nu}}}$ on $M$. 
We denote by $M_{\geq _{\underline{\nu}} 0}$ the set $\{ m \in M \mid m \geq _{\underline{\nu}} 0 \}$.
 If $\Gamma \subset M_{\geq _{\underline{\nu}} 0}$
then we can use the order $\underline{\nu}$  to distinguish a $d$-dimensional cone $\t^{(j)}$ of the the fan $\Sigma^{(j)}$ defining
the $j$-th
iterated blowing up of the logarithmic jacobian ideal.
First, we relabel the minimal system of  generators $G = \{ \g_1, \dots, \g_{n} \}$  of $\Gamma := \Gamma^{(1)}$
in such a way that $\g_1 = \min_{\leq_{\underline{\nu}}} G$,
and then $\g_i = \min_{\leq_{\underline{\nu}}} \{  \g  \in G \mid \g_1 \y \dots \y \g_{i-1} \y \g \ne 0 \} $ for $i= 2, \dots, d$.
The semigroup $\Gamma^{(2)}$  is generated by:
\[ \{ \g_1, \dots, \g_d \} \cup \{ \g_l - \g_i
\mid 1 \leq i \leq d < l \leq n , \ \g_1 \y \dots \y \g_{i-1} \y \g_l \y \g_{i+1} \y \dots \y \g_d \ne 0 \}. \]
The cone $\check{\t}^{(1)}$ spanned by $\Gamma^{(2)}$ is strictly convex of dimension $d$ and its dual cone
$\t^{(1)}$ belongs to $\Sigma^{(1)}$. With the notations of the begining of Section \ref{iteration}
this means that $\Gamma^{(2)} = \Gamma^{(2)}_{\t^{(1)}}$
(see Lemma \ref{repre-gen}).
By construction   the semigroup $\Gamma^{(2)}$ is contained in
$M_{\geq _{\underline{\nu}} 0}$,   hence we can repeat this procedure replacing $G$ by the minimal set of generators $G^{(1)} = \{ \g_1^{(1)}, \dots, \g^{(1)}_{n^{(1)}} \}$ of the semigroup
$\Gamma^{(2)}$. By repeating this procedure inductively, the order $\underline{\nu}$ defines a nested sequence cones $(\t^{(j)})$
of the form
(\ref{sequence}) and such that $\Gamma^{(j)} =  \Gamma^{(j)}_{\t^{(j-1)}}$ for all $j \geq 1$.

\begin{theorem} \label{lu} If $\underline{\nu}$  is a basis of $N$ defining a rational order $\leq_{\underline \nu}$ such that
$\Gamma \subset M_{\geq _{\underline{\nu}} 0}$ then:
\begin{enumerate}
\item[{\rm (i)}] For any $j \geq 0$ there
exists a unique $d$-dimensional cone  $\t^{(j)} \in \Sigma^{(j)} $ such that
$\Gamma^{(j+1)}_{\t^{(j)}} \subset  M_{\geq _{\underline{\nu}} 0}$.
\item[{\rm (ii)}]There exists an integer $l_0$, depending on $\underline{\nu}$,
such that $\t^{(j)} = \t^{(l_0)}$ for $j \geq l_0 -1$ and the semigroup
$\Gamma^{(l_0)}_{\t^{(l_0 -1)}}$ is regular.
\end{enumerate}
\end{theorem}
\textit{Proof}. The first assertion is consequence of definitions and the previous discussions.

Suppose that the second assertion of the Theorem does not hold.
Then the sequence of cones $(\t^{(j)})$ does not stabilize by Proposition \ref{stable}. By construction
we have that the vector $\nu_1$ belongs to $\t^{(j)} \cap N$ for all $j \geq 0$. By Proposition \ref{prev}
there exists an integer $j_0 \geq 0$ and a cone $\eta$  such that $ \nu_1 \in \mathrm{int}({\eta}) $ and $\eta \leq \t^{(j)}$ for $j \geq j_0$. The cone $\eta$ is stable.
If the sequence $(\t^{(j)})$ does not stabilize then there exists an integer $j_1 \geq j_0$ and a stable face $\theta$ such that
$\eta \leq \theta \leq \t^{(j)}$  for $j \geq j_1$,  $\codim \, \eta = d_0  > 0$ and any face $\vartheta^{(j)} \leq \t^{(j)}$ such that
$\theta \lneq \vartheta^{(j)}$ is not stable.

If $\nu \in N$ we denote by $\bar{\nu}$ the canonical image of $\nu$ in $N(\theta)$.
Then there exists an integer $1 < s \leq d_0 + 1$ such that $\bar{\nu}_i =0$ for $i =1, \dots, s-1$ and
$\bar{\nu}_s \ne 0$.  By definition of the order $\leq_{\underline{\nu}}$ the linear form $\bar{\nu}_s$ is non negative on
the semigroup $\Gamma^{(j)}_{\t^{(j-1)}} \cap \theta^{\perp} \subset \Gamma^{(j)}_{\t^{(j-1)}}$, that is,
the vector $\bar{\nu}_s $  is a non-zero element of  $\t^{(j-1)} (\theta) \cap N (\theta)$
for $j > j_1$.

The vector $\bar{\nu}_s $ belongs to the relative interior of a unique face $\z^{(j)} (\theta)$ 
of $\t^{(j)} (\theta)$ for $j \geq j_1$.
 By Proposition \ref{orbitq} there exists an integer $j_2 \geq j_1$ such that
$\z^{(j)} (\theta) = \z^{(j_2)} (\theta)$ for $j \geq j_2$.  We denote  by
$\z^{(\infty)} (\theta)$ the cone  $ \z^{(j_2)} (\theta)$ and by
 $d_1 > 0$ its dimension.

For any $j \geq j_2$ we fix a maximal chain of faces
\[
0 = \z_0 (\theta)  \leq \z_1 (\theta)  \leq \dots \leq \z_{d_1} (\theta) = \z^{(\infty)} (\theta),
\]
with $\dim \z_l (\theta) = l $ for $l= 0, \dots, d_1$. Then we get a chain of faces of $\t^{(j)}$ containing
$\theta$ of the form:
\[
\theta = \z_0^{(j)} \leq \z_1 ^{(j)}  \leq \dots \leq \z_{d_1} ^{(j)},
\]
where $\z_l^{(j)}$ is a face of codimension one of $\z_{l+1}^{(j)}$ for $l=0, \dots, d_1 -1$.
This implies that the sequence $( \z_{d_1} ^{(j)} )$ is distinguished. By Proposition \ref{fin} it follows that this sequence
stabilizes in a cone, which contains $\theta$ as a proper face. This contradiction ends the proof of Theorem \ref{lu}.
 \hfill $\ {\Box}$

\section{The Zariski-Riemann space of a fan, according to Ewald-Ishida (\cite{Ewald-Ishida})}   \label{EI}
In this section we recall some properties of a space which plays for proper birational toric maps of toric varieties the role played for proper birational maps of algebraic varieties by the Zariski-Riemann manifold.

As in \textit{loc.cit.}, we denote by $ZR(M)$ the set of all preorders on $M$ (see Definition \ref{po}).
Given $w\in ZR(M)$ we note by $m\leq_w n$ the corresponding preorder and by $L(w)$
the subsemigroup $\{m\in M\vert m\geq_w 0\}$. The set $L^0(w)=L(w)\cap (-L(w))$ of the elements that are equivalent
to zero for the equivalence relation $m=_w n$ deduced from $\leq_w$ is a saturated sublattice of $M$.\par
By construction, the preorder $\leq_w$ induces a total order on the free abelian group $M/L^0(w)$ and the
canonical quotient map $\lambda(w)\colon M\to M/L^0(w)$ determines the preorder $w$ from this order, so we see that every preorder on $M$
is given by a total order on a free quotient of $M$.
We denote by $o(M)$ the trivial preorder corresponding to the case where the free quotient is the zero group.\par
The topology on $ZR(M)$ is defined by a basis of open sets
$$ \bigl(\mathcal{U}(\theta)=\{w\in ZR(M)\vert \check \theta \cap M \subset L(w)\}\bigr)_\theta$$ where
$\theta$ runs through the set of rational polyhedral cones in $N_\R$. 
If $\Sigma$ is a rational fan in $N_\R$ the Zariski-Riemann manifold of $\Sigma$
is defined to be
$$ZR(\Sigma)=\bigcup_{\sigma\in \Sigma}\mathcal{U}(\sigma).$$
By \cite{Ewald-Ishida}, Proposition 2.10, it depends only on the support $\vert\Sigma\vert$ of $\Sigma$,
and by Theorem 2.4 of \textit{loc.cit.}, it is quasi-compact.\par
For each $w\in ZR(M)$, since $M/L^0(w)$ is a totally ordered group of finite rational rank,
it is of finite (real) rank (or height); there is a maximal sequence of distinct convex subgroups
$$(0)=\Psi_s(w)\subset \Psi_{s-1}(w)\subset\cdots\subset \Psi_1(w)\subset M/L^0(w)$$
such that the order on $M/L^0(w)$ induces on each of its quotients by the $\Psi_i (w)$ an order such that the quotient map is monotonous.
The totally ordered quotients $M_i=M/\lambda(w)^{-1}(\Psi_i)$ corresponding to these subgroups define via the surjections $\lambda_i\colon M\to M_i$ a sequence
$$w=w_0,w_1,\ldots, w_{s-1}, w_s=o(M)$$ of preorders on $M$, with which $w$ is said to be
\textit{composed}, as valuations are composed. The integer $s$ can be called the (real) \textit{rank} or the \textit{height} of the preorder. To them corresponds a sequence of distinct subsemigroups of $M$:
$$L(w)\subset L(w_1) \subset \cdots \subset L(w_{s-1})\subset L(w_s) = M.$$
and a sequence of distinct subgroups of $M$:
$$L^0(w)\subset L^0(w_1) \subset \cdots \subset L^0(w_{s-1})\subset L^0(w_s)=M.$$
Compare with \S 3.1 of \cite{T-valuations}. The semigroups $L(w_i)$ are the analogues in this situation of the valuation rings of a field containing a given valuation ring.
It is also shown in \cite{Ewald-Ishida}, Lemma 5.1, that for each nontrivial preorder there
is an element $y_0\in N_\R$ such that $L(w)\subset \{m\in M\vert \langle m,y_0\rangle\geq 0\}$,
which is unique up to multiplication by a positive real number.
In fact $\{x\in M_\R \vert \langle x,y_0\rangle\geq 0\}$ is the closure in
$M_\R$ of the convex closure of $L(w)$. To all preorders with which $w$ is composed correspond the same $y_0$.
It follows from what we have seen that, with the notations introduced above, $L^0(w_{s-1})$ is the intersection
with the lattice $M$ of $\{x\in M_\R\vert  \langle x,y_0\rangle = 0\}$.\par\noindent
Each quotient $\tilde{S}_i=L^0(w_i)/L^0(w_{i-1})$ ($1\leq i\leq s$) is a totally ordered lattice of real rank one whose order
is given by an embedding into $\R$. The images in $\R$ of the basis vectors of this lattice determine in $T_{i\R}$, where $T_i$ is the dual lattice of $\tilde{S}_i$, a weight vector which can be represented by an element $y_i\in N_\R$ as in \textit{loc.cit.}
The preorder $w$ on $M$ appears as the composition with $\lambda(w)$ of the lexicographic product of the rank one orders on the quotients
$\bigl(L^0(w_i)/L^0(w_{i-1})\bigr)_{1\leq i\leq s}$. Thus, for each preorder $w$ there exist elements
 $y_0,\ldots , y_{s-1}$ in $N_\R$ such that $m\geq_w 0$ if and only if the sequence
$( \langle y_0, m \rangle, \ldots , \langle y_{s-1}, m \rangle)$ is
$\geq 0$ for the lexicographic order. 
Each element $y_i$ is
defined up to positive homothety and modulo the $\R$-vector subspace of $N_\R$ generated by the previous ones. In particular, if $w$ is a rank
$d$ preorder it is a rational order with respect to
some ordered basis of $M$.

\begin{remark} \label{pre-form}
As a consequence of the discussion above, any preorder $w$ of $M$
is of the form $\leq_{\underline{ y}}$ for $\underline{y} = (y_0, \dots, y_{s-1}) \in N_\R^d$ 
and any rational preorder $w$  of $M$ is of the form $\leq_{\underline{y}}$ 
for $\underline{y} = (y_0, \dots, y_{s-1} )  \in N^s$  with $y_0, \dots, y_{s-1}$ 
linearly independent over $\R$ (compare with Definition \ref{pre-def}).
By induction on the height one can show that any non-trivial preorder is of the form
$\leq_{\underline{ y}}$ for $\underline{y'} = (y_0', \dots, y_{s-1}')$ and 
$y_0', \dots, y_{s-1}'$ vectors which are part of a basis of $N$. 
\end{remark}

Note that we can also, just as for valuations, define the
\textit{rational rank} of a preorder $w$ on $M$: it is the (rational)
rank of the free abelian group $M/L^0(w)$. The analogue of the
residual transcendence degree is then the rank of $L^0(w)$, so
that for preorders on $M$, the analogue of Abhyankar's equality
${\rm rat.rank}w+ {\rm rank}L^0(w)={\rm rank}M$ always
holds.\par\noindent A preorder which is an order, which means
$L^0(w)=0$ is the analogue of a valuation with algebraic residue field
extension, also called zero dimensional valuations (their centers are zero dimensional). 
If we are speaking of valuations dominating a local ring with algebraically closed residue field, 
they are the valuations with trivial residue field extension, which the second author calls \textit{rational valuations}
 in \cite{T-valuations}. Note also that since the preorders corresponding to vectors of $N$ are dense in $ZR(M)$ 
for the topology defined by Ewald-Ishida, while the set of orders is closed: if a preorder $w$ is not an order, 
one choose a rational cone $\theta$ such that $\check\theta\cap M\subset L(w)$ and $\check\theta\cap L^0(w)$ 
is a non-zero lattice. Then no preorder in the open set $\mathcal{U}(\theta)$, which contains $w$, can be an order.\par\noindent
It is shown in \cite{Si} that the closed subset of $ZR(M)$ consisting of orders on $M$, with the induced topology, is homeomorphic to a Cantor set when the rank of the lattice $M$ is $\geq 2$.\par\medskip\noindent
To summarize:
\begin{itemize}
\item All preorders are "Abhyankar".
\item A preorder given by a weight vector $w\in N$ is the analogue of a divisorial valuation on a noetherian local ring. Such preorders are dense.
\item A preorder which is an order is the analogue of a zero dimensional valuation. The set of orders is closed in $ZR(M)$ and homeomorphic to a Cantor set.
\item A preorder of maximal (real) rank is a rational order with respect to some basis of $N$ and is the analogue of a valuation of real rank equal to the dimension of the ring.
\end{itemize}
\par\medskip
Ewald and Ishida define the fact that a preorder \textit{dominates} a cone $\theta\subset N_\R$ by
the following two properties: $\check \theta \cap M \subset L(w)$ and $ \check \theta  \cap L^0(w)=
\theta^\perp\cap M$.
They show that given a fan $\Sigma$, a given preorder dominates at most one cone of $\Sigma$ (\cite{Ewald-Ishida},
Proposition 2.6)
 and that one has the inclusion $ \check \theta \cap M\subset L(w)$ if and only if
$w$ dominates a face of $\theta$ (\cite{Ewald-Ishida}, Lemma 2.7), which is then unique.\par
 Denoting by ${\rm dom}(\sigma)$ the set of elements $w\in ZR(M)$ dominating a given strictly convex
rational polyhedral cone $\sigma\subset N_\R$, it is non empty and one has moreover,
$F(\sigma)$ being the fan consisting of the faces of $\s$:
 $$ZR (F(\sigma))=\bigcup_{\zeta\in F(\sigma)}{\rm dom}(\zeta).$$
 If $\s$ is a strictly convex cone and $w\in ZR (F(\sigma))$, in each refinement $\Sigma^{(i)}$ of $F(\s)$ there is exactly one cone which is dominated by $w$.\par
 \begin{definition} Given an affine toric variety $T_{F(\s)}^\Gamma$ as above and a preorder $w$ on $M$ such that $\check\s\cap M\subset L(w)$, the \textit{semigroup} of $w$ on $k[t^\Gamma]$ is the image of $\Gamma$ in the quotient $M/L^0(w)$; it is a semigroup contained in the positive part of $M/L^0(w)$.\end{definition} If $w$ dominates $\s$ this semigroup is the same as the image of $\Gamma$ in the quotient $M/M(\s)$ which was used in the proof of Lemma \ref{qzero}.
 
 \section{Interpretation of the main result with the Zariski-Riemann Manifold of a fan}    \label{EI2}
Now let $\s$ be a strictly convex rational cone in $N_\R$ and consider the sequence $\Sigma^{(i)}$ of fans obtained by iterating the blowing-up of logarithmic jacobian ideals, with $\Sigma^{(0)}=F(\s)$. For each $w\in ZR(\Sigma^{(i)})=ZR(\Sigma^{(0)})$, and each $i, 0\leq i<{\rm rank}(w)$, the cones dominated by the partial orders $w_j$ with which $w$ is composed form a sequence of distinct cones of $\Sigma^{(i)}$
\begin{equation} \label{faces2}\zeta^{(i)}_{s-1}\subset \zeta^{(i)}_{s-2}\subset\cdots \subset \zeta^{(i)}_0\end{equation}
and since $\Sigma^{(i+1)}$ is a refinement of $\Sigma^{(i)}$, we have for each $j,\ 0\leq j\leq s-1$
the inclusion $\zeta^{(i+1)}_j\subset \zeta^{(i)}_j$.\par
A cone $\zeta\in \Sigma^{(i)}$ is dominated by $w_{s-1}$ if and only if $y_0$ is in the relative interior of $\zeta$ (see also \cite{Ewald-Ishida}, the four lines before 2.7, which also show that ${\rm dom}(\s)$ is not empty).\par
On the other hand, if $w$ is in fact an order, which means that $L^0(w)=0$, then a rational cone $\tau$ dominated by $w$ must satisfy $\check\tau\subset L(w)$ and ${\rm dim}\tau={\rm rank}M=d$ since we must have $\tau^\perp=(0)$; the rank of $w$ must be $\leq d$.

\begin{definition}Let us keep the notations just introduced. Given a preorder $w\in ZR (F(\sigma))$, we say that the sequence of fans $\Sigma^{(j)}$, or the sequence of toric varieties $T_{\Sigma^{(i)}}^{\Gamma^{(i)}}$, \textit{stabilizes at the preorder} $w$ if
there exists an integer
$i\geq 1$ such that the cone $\tau^{(j)}$  of $\Sigma^{(j)}$ dominated by $w$ is regular and the corresponding semigroup
is also regular, in particular, it  coincides with $\check\tau^{(j)}\cap M$.
\end{definition}
\begin{proposition}
\label{pre-eq}
Given the sequence of fans $\Sigma^{(j)}$ as above, the following statements are equivalent:
\begin{enumerate}
\item[{\rm (1)}]  There exists an integer $i$ such that the toric variety $T_{\Sigma^{(j)}}^{\Gamma^{(j)}}$ is regular.
\item[{\rm (2)}]  For every preorder $w$ of $ZR (F(\sigma))$ the sequence $\Sigma^{(j)}$ stabilizes at $w$.
\end{enumerate}
\end{proposition}
\begin{proof} We only have to prove that ${\rm (2)}$ implies ${\rm (1)}$. By construction if the sequence stabilizes at
 $w$ and $w$ dominates the regular simplex $\tau^{(j)}$ it also stabilizes for the same index for all preorders
 $w'\in\mathcal{U}(\tau^{(j)})$ since the faces of $\tau^{(j)}$ as well as the corresponding semigroups are regular.
This gives us an open covering of $ZR(\Sigma^{(0)})=ZR (F(\sigma))$. Since it is quasi-compact (see \cite{Ewald-Ishida}, Theorem 2.4)
one can extract a finite covering. Taking the maximum $i$ of the indices of stabilization corresponding to
those open sets gives us a fan $\Sigma^{(i)}$ covered by regular cones, and so ${\rm (1)}$.
\end{proof}
We can now reformulate the main result as follows, with the usual notations:
\begin{theorem} Given an affine toric variety $T^\Gamma$ and a rational order $w\in ZR(F(\s))$ with
respect to some ordered basis of $N$, the sequence $\Sigma^{(i)}$ of the Semple-Nash refinements of the cone
$\sigma$ stabilizes for $w$.
\end{theorem}
\begin{proof} Use the construction at the beginning of this section and Theorem \ref{lu}.
\end{proof}

\begin{corollary} 
For every rational preorder $w$ of $ZR (F(\sigma))$ the sequence $\Sigma^{(j)}$ stabilizes at $w$.
\end{corollary}
\begin{proof} By Remark \ref{pre-form},  
the rational preorder $w$ is of the form  $\leq_{\underline{\nu}}$, 
where $\underline{\nu} = (\nu_1, \dots, \nu_s)$ 
for $s \leq d$ and $\nu_1, \dots, \nu_s \in N $ are vectors which belong to a basis of $N$.  
Then we can complete this sequence to a basis of $N$ and consider the associated rational order $\bar{w}$. 
It follows that the sequence $\Sigma^{(j)}$ associated to $\bar{w}$ stabilizes at every
preorder with which such an order is composed, in particular with $w$.
\end{proof}
\begin{remark}
Any
preorder given by an integral vector $0 \ne \nu\in N$ is also
represented by a primitive vector which generates a direct factor
of $M$ so that there exist rational orders which are composed
with it. In this case this corollary is the analogue of Hironaka's result in \cite{H} 
since preorders corresponding to integral vectors are the analogues of divisorial valuations.
\end{remark}

\begin{remark}
As a conclusion, we prove the desired result for the restricted class of rational preorders  of $ZR(F(\s))$,
 while in order to prove resolution of
 toric varieties by blowing up logarithmic
jacobian ideals one should prove a similar result for all
preorders  in $ZR(F(\s))$.  
\end{remark}
We end the paper with the verification of the analogue in our situation of Zariski's construction of the Zariski-Riemann space in \cite{Z}.\par
\begin{lemma}\label{injj} Let $w,w'\in ZR(F(\s))$ be different preorders.
There exists a refinement $\Sigma'$ of  $F(\s)$ such that the two cones of $\Sigma'$
dominated respectively by $w$ and by $w'$ are distinct.
\end{lemma}
\begin{proof} Since the trivial preorder can dominate only the cone $\{0\}$ in a fan, we may assume that neither
preorder is trivial. To $w$ and $w'$ let us attach sequences $y_0,\ldots , y_{s-1}$ and $y'_0,\ldots , y'_{s'-1}$
of elements of $N_\R$ as in the previous section. Let us denote by $M^i(w)$ the intersection of $M$ with the subspace
$( \langle y_0, m \rangle= \cdots = \langle y_{i-1} , m \rangle)=0$, with the convention $M^0(w)=M$. It is proved in
\cite{Ewald-Ishida} that $w$ dominates a rational convex cone $\tau\subset N_\R$ if and only if
for $i=0,\ldots, s-1$ we have $y_i\in \tau+(M^i(w))^\perp$, and $\check\tau\cap M^s(w)=M\cap\tau^\perp$.\par\noindent
The quotients $ \tau+(M^i(w))^\perp/(M^i(w))^\perp$ are the dual cones of $\check\tau\cap M^i(w)_\R$. If $y_0$ and
$y'_0$ are not homothetic there is certainly a refinement of the fan $F(\s)$ in which they are contained in different
cones. This also settles the case where $M$ is of rank one. Otherwise we have $M^1(w)=M^1(w')$ and the preorders
induced by $w$ and $w'$ on $M^1(w)$ are different. We can now argue by induction on the rank of $M$ since a refinement
of the fan $ \Sigma+(M^1(w))^\perp/(M^1(w))^\perp$ will induce a refinement of $\Sigma$.
\end{proof}

Let us denote by $\{T_{\Sigma}^{\Gamma}\}$ the set of irreducible $T^M(k)$-invariant subvarieties of
$T_{\Sigma}^{\Gamma}$ endowed with the topology induced by the Zariski topology.
Let us consider all possible 
equivariant blowing-ups $T_{\Sigma'}^{\Gamma'}\to T_\Sigma^\Gamma$. 
They form a projective system of $T_\Sigma^\Gamma$-schemes and the $\{T_{\Sigma'}^{\Gamma'}\}$ also  form a
 projective system of topological spaces.\par\noindent
Given a preorder $w\in ZR(\Sigma)$ it dominates a cone $\tau'$ in each refinement $\Sigma'$ of $\Sigma$
and defines an irreducible $T^M(k)$-invariant subvariety corresponding to the prime ideal generated by
the monomials of $k[t^{\Gamma'_{\tau}}]$ whose exponents are $>_w 0$. This defines a map
$$Z\colon ZR(\Sigma)\to \limproj\{T_{\Sigma'}^{\Gamma'}\}.$$
\begin{proposition} The map $Z$ is an homeomorphism.
\end{proposition}
\begin{proof} The map is injective by Lemma \ref{injj}. By the construction of the blowing-ups of toric varieties an element
 $e$ of the projective limit determines in each $T_{\Sigma'}^{\Gamma'}$ a cone $\tau'$ and a semigroup
$\Gamma'\subset \check\tau'\cap M$. Let us try to define a preorder on $M$ as follows:
$m\geq_e 0$ if $m \in \Gamma'$ for some $\Gamma'$ picked by the element $e$. We have to prove that given $m, n\in M$
either $m-n\geq_e0$ or $n-m\geq_e0$. Since the $\Gamma'$ generate $M$ we may assume that
$m, n \in \Gamma'$ for some $\Gamma'$. Consider the ideal generated by $t^m,t^n$ and let
$T_{\Sigma''}^{\Gamma''}\to T_{\Sigma'}^{\Gamma'}$ be its blowing-up.
The effect on the fan $\Sigma'$ divides the cone $\tau'$ in two by the hyperplane dual to the vector $m-n$.
The element $e$ picks one of the two charts, say $\tau''$, and either $m-n$ or $n-m$ is in the corresponding $\Gamma''$,
which proves the desired result. The fact that the semigroups $\Gamma'$ form an inductive system suffices to show the
transitivity of the preorder.\par\noindent
The fact that the map just defined is the inverse of the map $Z$ and determines an homeomorphism follows directly
from the definitions. The trivial preorder corresponds to $T^M \subset T^{\Gamma'}_{\Sigma'}$.
\end{proof}

\bibliographystyle{amsplain}

\def\cprime{$'$}
\providecommand{\bysame}{\leavevmode\hbox
to3em{\hrulefill}\thinspace}


\providecommand{\bysame}{\leavevmode\hbox to3em{\hrulefill}\thinspace}
\providecommand{\MR}{\relax\ifhmode\unskip\space\fi MR }
\providecommand{\MRhref}[2]{%
  \href{http://www.ams.org/mathscinet-getitem?mr=#1}{#2}
}
\providecommand{\href}[2]{#2}
\begin{thebibliography}{}

\end{thebibliography}


\begin{thebibliography}{10}

\bibitem{ALPPT}
A.~Atanasov, C.~Lopez, A.~Perry, N.~Proudfoot, and M.~Thaddeus, \emph{Resolving
  toric varieties with Nash blow-ups}, 
  Experimental Math. \textbf{20}  (2011), no.~3, 288--303.
  

\bibitem{B}
N.~Bourbaki, \emph{\'{E}l\'ements de math\'ematique. {VII}. {P}remi\`ere
  partie: {L}es structures fondamentales de l'analyse. {L}ivre {II}:
  {A}lg\`ebre. {C}hapitre {III}: {A}lg\`ebre multilin\'eaire}, Actualit\'es
  Sci. Ind., no. 1044, Hermann et Cie., Paris, 1948.



  \bibitem{CCD} E.~ Cattani, R.~ Curran and A.~Dickenstein, \emph{Complete intersections in toric ideals},
 Proc. Amer. Math. Soc. \textbf{135} (2007), 329--335.



   \bibitem{CHWW} G.~ Corti\~nas, C.~ Haesemeyer, M.~E.~  Walker and C.~A.~ Weibel
   \emph{Toric varieties, monoid schemes and $cdh$ descent}, ArXiv math.KT/11061389,  
   (2011)
 
\bibitem{CLS}
David~A. Cox, John~B. Little, and Henry~K. Schenck, \emph{Toric varieties},
  Graduate Studies in Mathematics, vol. 124, American Mathematical Society,
  Providence, RI, 2011.
  
  
  \bibitem{Da}
V.~I. Danilov, \emph{The geometry of toric varieties}, Uspekhi Mat. Nauk
  \textbf{33} (1978), no.~2(200), 85--134, 247.
  
  \bibitem{Du}
D.~ Duarte, \emph{Nash modification on toric surfaces}, 
Revista de la Real Academia de Ciencias Exactas, F\' \i sicas y Naturales. Serie A. Matem\'aticas, 
first published on line DOI: 10.1007/s13398-012-0104-4. 

\bibitem{E-S}
D.~Eisenbud and B.~Sturmfels, \emph{Binomial ideals}, Duke Math. J. \textbf{84}
  (1996), no.~1, 1--45.

\bibitem{Ewald}
G.~Ewald, \emph{Combinatorial convexity and algebraic geometry}, Graduate Texts
  in Mathematics, vol. 168, Springer-Verlag, New York, 1996.

\bibitem{Ewald-Ishida}
G.~Ewald and M-N Ishida, \emph{Completion of real fans and Zariski-Riemann spaces},
 Toh\^oku Math. J., (2), 58, (2006), 189-218.

\bibitem{Fulton}
W.~Fulton, \emph{Introduction to toric varieties}, Annals of Mathematics
  Studies, vol. 131, Princeton University Press, Princeton, NJ, 1993, The
  William H. Roever Lectures in Geometry.

  \bibitem{GKZ}
I.~M. Gel{\cprime}fand, M.~M. Kapranov, and A.~V. Zelevinsky,
  \emph{Discriminants, resultants, and multidimensional determinants},
  Mathematics: Theory \& Applications, Birkh\"auser Boston Inc., Boston, MA,
  1994.
{\Large }
  \bibitem{GP-T}
P.~D. Gonz\'alez~P\'erez and B.~Teissier, \emph{Embedded resolutions of not
  necessarily normal affine toric varieties}, C.R. Math. Acad. Sci. Paris
  \textbf{334} (2002), no.~5, 379--382.
  
 \bibitem{GS-ENS}
G.~Gonzalez-Sprinberg, \emph{\'Eventails en dimension 2 et transform\'e de
  Nash}, Publications du Centre de Math\'ematiques de l'E.N.S., 45, rue d'Ulm,
  Paris 5e (1977), 1--68.

\bibitem{GS-CRAS}
\bysame, \emph{Transform\'e de {N}ash et \'eventail de dimension {$2$}}, C. R.
  Acad. Sci. Paris S\'er. A-B \textbf{284} (1977), no.~1, A69--A71.

  \bibitem{GS}
\bysame, \emph{R\'esolution de {N}ash des points doubles
  rationnels}, Ann. Inst. Fourier (Grenoble) \textbf{32} (1982), no.~2, x,
  111--178.
  
 \bibitem{G-M}
D.~Grigoriev and P.~ Milman, \emph{Nash desingularization for binomial varieties
as Euclidean division, a priori termination bound, polynomial complexity in dim $2$},  
Adv. Math. 231 (2012), no.~6, 3389--3428.

\bibitem{Ha}
R.~Hartshorne, \emph{Algebraic geometry}, Springer-Verlag, New York, 1977,
  Graduate Texts in Mathematics, No. 52.

\bibitem{H}
H.~Hironaka, \emph{On {N}ash blowing-up}, Arithmetic and geometry, {V}ol. {II},
  Progr. Math., vol.~36, Birkh\"auser Boston, Mass., 1983, 103--111.

  \bibitem{K-K}
K.~Kaveh and A.~G. Khovanskii, \emph{
Newton-Okounkov bodies, semigroups of integral points, graded algebras and intersection theory},
Annals of Math. \textbf{176} (2012), no.~2, 925--978.

\bibitem{TE}
G.~Kempf, F.~F. Knudsen, D.~Mumford, and B.~Saint-Donat, \emph{Toroidal
  embeddings. {I}}, Lecture Notes in Mathematics, Vol. 339, Springer-Verlag,
  Berlin, 1973.
  
\bibitem{Ku}
O.~Kuroda, \emph{The infiniteness of the SAGBI bases for certain invariant rings}, Osaka J. Math.,\textbf{39}, (2002), 665-680.

\bibitem{LJ-R}
M.~Lejeune-Jalabert and A.~J. Reguera, \emph{The {D}enef-{L}oeser series for
  toric surface singularities}, Proceedings of the {I}nternational {C}onference
  on {A}lgebraic {G}eometry and {S}ingularities ({S}panish) ({S}evilla,
  2001), Rev. Mat. Iberoamericana
  vol.~19, 2003, 581--612.

\bibitem{No}
A.~Nobile, \emph{Some properties of the {N}ash blowing-up}, Pacific J. Math.
  \textbf{60} (1975), no.~1, 297--305.

  \bibitem{Oda-M}
T.~Oda, \emph{Torus embeddings and applications}, Tata Institute of Fundamental
  Research Lectures on Mathematics and Physics, vol.~57, Tata Institute of
  Fundamental Research, Bombay, 1978, Based on joint work with Katsuya Miyake.

\bibitem{Oda}
\bysame, \emph{Convex bodies and algebraic geometry}, Ergebnisse der Mathematik
  und ihrer Grenzgebiete (3), vol.~15, Springer-Verlag, Berlin, 1988.

\bibitem{S}
J.~G. Semple, \emph{Some investigations in the geometry of curve and surface
  elements}, Proc. London Math. Soc. (3) \textbf{4} (1954), 24--49.
  
\bibitem{Si} A.~Sikora, \emph{Topology on the spaces of orderings of groups}, Bull. London Math. Soc. 36 (2004), no. 4, 519–526.

\bibitem{Spivakovsky}
M.~Spivakovsky, \emph{Sandwiched singularities and desingularization of
  surfaces by normalized {N}ash transformations}, Ann. of Math. (2)
  \textbf{131} (1990), no.~3, 411--491.
  
  \bibitem{St}
J.~ Steiner \emph{ \"Uber eine besondere Curve dritter Classe (und vierten Grades)}, Burchardt's Journal Band LIII,
231-237 (Gelesen in der Akademie der Wissenschaften zu Berlin am 7. Januar 1856).
Republished in:
Steiner, J. (1882). Gesammelte Werke, Band II (Herausgegeben von K. Weierstrass) (Berlin, G. Reimer, 1882),
639-647.

\bibitem{Sturmfels}
B.~Sturmfels, \emph{Gr\"obner bases and convex polytopes}, University Lecture
  Series, vol.~8, American Mathematical Society, Providence, RI, 1996.

\bibitem{Sumihiro-I}
H.~Sumihiro, \emph{Equivariant completion}, J. Math. Kyoto Univ. \textbf{14}
  (1974), 1--28.

\bibitem{T-hunting}
B.~Teissier, \emph{The hunting of invariants in the geometry of discriminants},
  Real and complex singularities, Oslo 1976, Sijthoff and Noordhoff
  International publishers, Alphen aan den Rijn, 1977, 565--677.

\bibitem{T-valuations}
\bysame, \emph{Valuations, deformations, and toric geometry}, Valuation theory
  and its applications, {V}ol. {II} ({S}askatoon, {SK}, 1999), Fields Inst.
  Commun., vol.~33, Amer. Math. Soc., Providence, RI, 2003, 361--459.
  
  \bibitem{Thom}
  R.~Thom, \emph{Sur la th\'eorie des enveloppes}, J. Math. Pures Appl. (9), \textbf{41}, 1962, 177-192.

   \bibitem{Thompson1} H. M  Thompson, \emph{Fan is to monoid as scheme is to ring:
   a generalization of the notion of a fan}, ArXiv math.AG/0306221.

   \bibitem{Thompson2}
   \bysame, \emph{Comments on toric varieties}, ArXiv  math.AG/0310336.

   \bibitem{Thompson3}
   \bysame, \emph{Toric singularities revisited}, J. Algebra \textbf{299} (2006), no. 2, 503--534.

   \bibitem{Wh}
  H.~Whitney, \emph{Tangents to an analytic variety},  Ann. of Math. (2)  \textbf{81} (1965), 496--549.

   \bibitem{Z}
  O.~Zariski, \emph{The compactness of the Riemann manifold of an abstract field of algebraic functions},  
  Ann. of Math., \textbf{41} (1940), 852--892.


\end{thebibliography}

\end{document}